\documentclass[12pt]{amsart}

\usepackage{amsmath}
\usepackage{amssymb}
\usepackage{amsthm}
\usepackage{xcolor}
\usepackage{bm}
\usepackage{graphicx}
\usepackage{pifont}
\usepackage{mathrsfs}
\usepackage{epsfig,verbatim}
\usepackage[pagewise]{lineno}\linenumbers

\newcommand\be{\begin{equation}}
\newcommand\ee{\end{equation}}
\newcommand\bea{\begin{eqnarray}}
\newcommand\eea{\end{eqnarray}}
\newcommand\beaa{\begin{eqnarray*}}
\newcommand\eeaa{\end{eqnarray*}}
\newcommand\beba{\begin{equation}\left\{\begin{array}{rcl}}
\newcommand\eeba{\end{array}\right.\end{equation}}
\newcommand\bebaa{\begin{equation*}\left\{\begin{array}{rcl}}
\newcommand\eebaa{\end{array}\right.\end{equation*}}

\newcommand\bR{{\mathbb{R}}}

\newcommand\cK{{\mathcal{K}}}

\newcommand\cX{{\mathcal{X}}}

\newcommand{\eps}{\epsilon}

\newcommand{\dint}{\displaystyle \int}
\newcommand{\dsum}{\displaystyle \sum}

\newcommand{\dlim}{\displaystyle \lim}

\newtheorem{theorem}{Theorem}[section]
\newtheorem{lemma}[theorem]{Lemma}

\newtheorem{example}[theorem]{Example}
\newtheorem{definition}[theorem]{Definition}
\newtheorem{remark}[theorem]{Remark}
\newtheorem{proposition}[theorem]{Proposition}

\numberwithin{equation}{section}

\begin{document}
\nolinenumbers

\title[Asymptotic traveling waves]{Global front solutions converging to an asymptotic traveling wave in a time-fractional Amari model}

\author[H. Ishii]{Hiroshi Ishii}
\address{Research Center of Mathematics for Social Creativity, Research Institute for Electronic Science, Hokkaido University, Hokkaido, 060-0812, Japan}
\email{hiroshi.ishii@es.hokudai.ac.jp}

\thanks{Date: \today. Corresponding author: Hiroshi Ishii}

\thanks{{\em Keywords. Caputo derivative, Asymptotic behavior, Neural field equation, Traveling wave, Front solution}}
\thanks{{\em 2020 MSC} Primary 35C07, Secondary 35R09, 35R11}

\begin{abstract}
We study front propagation in a time-fractional Amari neural field equation with a Caputo derivative and a Heaviside firing-rate function. 
For a class of nonnegative even connectivity kernels and strictly decreasing $C^1$ front-like initial data, we reduce the Cauchy problem to a nonlinear Volterra equation for the threshold location. 
We prove the existence and uniqueness of a global front solution, determine the short-time behavior of its threshold location, and identify its asymptotic speed. 
We also construct the associated asymptotic traveling-wave profile and establish uniqueness of the speed and profile, up to translation, in the nonzero-speed case. 
Using a rigidity argument, we prove the asymptotic linearity of the local increments of the threshold location and obtain uniform convergence to the profile in the threshold-centered frame. 
Finally, we characterize the kernel-dependent tail asymptotics of the wave profile.
\end{abstract}

\maketitle
\setcounter{tocdepth}{3}

\section{Introduction}\label{sec:intro}

Evolution equations involving Caputo time-fractional derivatives have recently attracted attention as mathematical models for propagation phenomena in media with memory. 
A representative example is the time-fractional diffusion equation, which can be derived from a continuous-time random walk with a heavy-tailed waiting-time distribution as a subdiffusion model \cite{MK}.
Time-fractional equations have also been used to model phenomena such as the motion of a large thin plate in a Newtonian fluid \cite{TB}, chemical reaction systems with subdiffusive media \cite{HLW}, tumor growth \cite{FK}, and so on.

Time-fractional dynamics have also been incorporated into neural field theory. 
Neural field theory describes the spatiotemporal dynamics of large neuronal populations at a macroscopic level, using population-level variables such as membrane potentials and firing rates. 
This framework has led to a broad class of integro-differential equations, commonly referred to as neural field equations, for modeling excitation patterns in spatially distributed neuronal populations (e.g. \cite{Amari, Bress, CPWT}). 
A fundamental example is the Amari model \cite{Amari}. 
On the real line, the classical Amari model takes the form
\beaa
\partial_{t} u(t,x) = \int_{\bR} K(x-y) F(u(t,y)) dy - u(t,x) \quad (t>0,\ x\in\bR).
\eeaa
Here, $u(t,x)$ denotes the activity of a neuronal population at time $t$ and position $x$.
The connectivity kernel $K$ represents the strength of synaptic connectivity as a function of spatial distance, while $F$ describes the firing-rate response. 
A standard choice for $F$ is the Heaviside function 
\beaa 
    F(u)=H(u-u_T), 
\eeaa 
where $u_T>0$ denotes the firing threshold. 
The Amari model and its related models generate a variety of spatiotemporal dynamics. 
 
To incorporate memory effects into neural field dynamics, the following time-fractional Amari model was proposed in \cite{Gonzalez}:
\be\label{eq:main}
\partial^{\alpha}_{t} u(t,x) = \int_{\bR} K(x-y) H(u(t,y)-u_T) dy - u(t,x) \quad (t>0,\ x\in\bR) \tag{TfA}
\ee
where $\alpha\in(0,1)$ and $\partial_t^\alpha$ denotes the Caputo-type fractional derivative
\beaa
\partial^{\alpha}_{t} u (t,x):= \dfrac{1}{\Gamma(1-\alpha)} \int^{t}_{0}\dfrac{\partial u}{\partial s}(s,x) \dfrac{ds}{(t-s)^{\alpha}}.
\eeaa
In \cite{Gonzalez}, front propagation was investigated for the exponential kernel 
\beaa
    K(x) = \dfrac{1}{2\sigma} \exp\left( - \dfrac{|x|}{\sigma} \right)\ (\sigma>0)
\eeaa
with the aim of clarifying the effects of the time-fractional derivative on front propagation.

A fundamental difficulty in the analysis of propagation phenomena governed by Equation~\eqref{eq:main} is that the usual traveling-wave ansatz does not reduce the equation to an autonomous profile equation. 
Indeed, suppose that 
\beaa
    u(t,x)=\phi(x-ct)
\eeaa 
for some speed $c\in\bR$, and set $z:=x-ct$.
Then 
\be\label{eq:Caputo}
\partial^{\alpha}_{t} u(t,x) = -\dfrac{c}{\Gamma(1-\alpha)} \int^{t}_{0} \dfrac{\phi'(x-cs)}{(t-s)^{\alpha}}\, ds= -\dfrac{c}{\Gamma(1-\alpha)} \int^{t}_{0} \dfrac{\phi'(z+cs)}{s^{\alpha}}\, ds
\ee
Thus, even in the moving coordinate $z$, the right-hand side of \eqref{eq:Caputo} retains an explicit dependence on $t$. 
Consequently, unlike in the classical case, the traveling-wave ansatz does not transform Equation~\eqref{eq:main} into a time-independent profile equation. 
In this sense, the standard theory of traveling waves cannot be applied directly to the time-fractional problem.

For this reason, several previous studies have introduced an asymptotic traveling wave by formally assuming that
\beaa
     u(t,x) \simeq \phi(x-ct) \quad (t\to+\infty)
\eeaa
and replacing the upper limit in \eqref{eq:Caputo} by $+\infty$. 
More precisely, for $c\neq0$, define 
\beaa
    D^{\alpha}_{c} \phi(z) := 
    \begin{cases}
        -\dfrac{c^{\alpha}}{\Gamma(1-\alpha)} \dint^{+\infty}_{0} \dfrac{\phi'(z+s)}{s^{\alpha}}\, ds &(c>0), \vspace{3mm}\\ 
        \dfrac{|c|^{\alpha}}{\Gamma(1-\alpha)} \dint^{+\infty}_{0} \dfrac{\phi'(z-s)}{s^{\alpha}}\,ds &(c<0).
    \end{cases}
\eeaa
A pair $(c,\phi)$ satisfying the formal limiting equation 
\be\label{eq:atw}
    D^\alpha_{c}\phi(z) = \int_{\bR} K(z-y)H(\phi(y)-u_T)\,dy - \phi(z) \tag{ATW}
\ee
is referred to as an asymptotic traveling wave. 
Such waves have been studied for the time-fractional Fisher--KPP equation \cite{IH}, time-fractional Allen--Cahn equations \cite{NVN,VNN}, and the time-fractional Amari model \cite{Gonzalez}.
These studies identify characteristic propagation speeds and profiles and thereby clarify how the fractional time derivative affects front propagation.

The derivation of Equation~\eqref{eq:atw}, however, is based on a formal long-time approximation. 
As pointed out in \cite{FDPDE}, it does not by itself guarantee the existence of a solution of the original Cauchy problem that converges to the resulting asymptotic traveling wave. 
To the best of our knowledge, for the Cauchy problem of the Caputo-type neural field equation considered here, it has not previously been proved that an asymptotic traveling wave is realized as the long-time limit of an actual solution.

In the author's previous work \cite{IH}, asymptotic traveling waves for a time-fractional Fisher--KPP equation were investigated. 
An expected minimal propagation speed was derived, and numerical simulations supported an analogue of the classical KPP speed-selection principle. 
Nevertheless, convergence of solutions of the Cauchy problem to the asymptotic traveling waves remained unresolved. 
Establishing such a convergence mechanism is therefore an important step toward a rigorous theory of front propagation in time-fractional evolution equations.

The aim of this paper is to establish such a mechanism for the time-fractional Amari model \eqref{eq:main}.
We consider monotone front-like initial data and exploit the Heaviside firing-rate function to describe the front position by a single threshold location $x^*(t)$. 
Consequently, the original Cauchy problem can be reduced to a nonlinear Volterra equation for $x^*(t)$.
This reduction makes it possible to construct a global front solution and to analyze its propagation directly.

Within this framework, we construct a unique global front solution, determine the short- and long-time behavior of the threshold location, and identify the propagation speed. 
We also construct the corresponding asymptotic traveling wave and prove its uniqueness up to spatial translation in the nonzero-speed case. 
Finally, we show that the global front solution converges uniformly, in the moving frame attached to $x^*(t)$, to the asymptotic traveling-wave profile.

A key feature of our approach is that it does not rely on an explicit formula for the connectivity kernel. 
In contrast to \cite{Gonzalez}, where the construction uses the special structure of an exponential kernel, our arguments apply to a broad class of nonnegative, even, bounded, continuous, and integrable kernels.
Thus, the global construction of front solutions, speed selection, the construction of asymptotic traveling waves, and convergence to the profile are treated within a single rigorous framework.

The remainder of this paper is organized as follows: In Section~\ref{sec:main}, we state the assumptions and the main results. 
We construct the global front solution and analyze the corresponding threshold location in Section~\ref{sec:global}.
In Section~\ref{sec:atw}, we establish the existence and uniqueness of asymptotic traveling waves. 
In Section~\ref{sec:convergence}, we prove convergence of the global front solution to the selected asymptotic traveling wave. 
We investigate the kernel dependence of the wave profile in Section~\ref{sec:ker}. 
Finally, Section~\ref{sec:dis} discusses several remaining problems.

\bigskip
\section{Main results}\label{sec:main}

In this section, we introduce the assumptions used throughout the paper and state our main results.

We assume that the connectivity kernel $K$ satisfies 
\be\label{condi:kernel}
K\in L^1(\bR)\cap BC(\bR), \qquad K(0)>0,\qquad K\geq 0,\qquad K(x)=K(-x)\ (\forall x\in\bR). \tag{K}
\ee
Here, $BC(\bR)$ denotes the space of bounded continuous functions on $\bR$.
We set 
\beaa
\kappa:=\int_{\bR} K(y)\,dy >0
\eeaa
and assume that the firing threshold $u_T$ satisfies
\be\label{condi:bistable}
 u_{T}\in (0,\kappa). \tag{B}
\ee
Under \eqref{condi:bistable}, the spatially homogeneous equation of \eqref{eq:main} is
\beaa
\partial^{\alpha}_{t} u^* =  \kappa H(u^*-u_T) - u^*
\eeaa
which has the two stable equilibria $0$ and $\kappa$.

We consider the initial condition
\be\label{init:main}
    u(0,x) = u_0(x)\quad (x\in\bR), \tag{IC}
\ee
where $u_0$ is assumed to satisfy
\be\label{condi:init}
    \begin{cases}
        u_0\in C^1(\bR),\quad u'_0<0,\\
        u_0(0)=u_T,\quad u_0(-\infty)= \kappa, \quad u_0(+\infty)=0.
    \end{cases}\tag{I}
\ee

We consider the mild solution $u\in C([0,+\infty)\times\bR)$ to \eqref{eq:main} with \eqref{init:main} (see Section~\ref{subsec:int-rep} for details).
We first define the class of front solutions considered in this paper.
\begin{definition}
   A mild solution $u\in C([0,+\infty)\times\bR)$ is a front solution to Equation~\eqref{eq:main} with \eqref{init:main} if there exists $x^*(t)\in C[0,+\infty)$ such that the solution satisfies
   \be\label{sol:front}
    u(t,x) 
    \begin{cases}
    >u_T &(x<x^*(t)), \\
    =u_T &(x=x^*(t)), \\
    <u_T &(x>x^*(t))
    \end{cases}
    \ee
    for any $t\ge 0$. We refer to $x^*(t)$ as the \textbf{threshold location}.
\end{definition}

\medskip
To state the main results, we introduce the tail integral of the connectivity kernel,
\beaa
\cK(x):= \int^{+\infty}_{x} K(y)\,dy
\eeaa
It follows from \eqref{condi:kernel} that $\cK$ is non-increasing and satisfies 
\beaa
    0\leq \cK(x)\leq\kappa, \qquad \cK(-\infty)=\kappa, \qquad \cK(+\infty)=0. 
\eeaa
Moreover, the evenness of $K$ gives 
\beaa
    \cK(0)=\frac{\kappa}{2}. 
\eeaa

For $\alpha,\beta>0$, let 
\beaa
     E_{\alpha,\beta}(z) := \dsum^{\infty}_{n=0} \dfrac{z^{n}}{\Gamma(\alpha n + \beta)}
\eeaa
be the two-parameter Mittag--Leffler function, and write $ E_\alpha(z):=E_{\alpha,1}(z)$.
We also set 
\beaa
g_{\alpha}(t) := t^{\alpha-1} E_{\alpha,\alpha}(-t^{\alpha})
\eeaa
For $\alpha\in(0,1)$, the function $g_\alpha$ is positive and integrable on $(0,\infty)$. 
Further properties of $E_\alpha$ and $g_\alpha$ used in the proofs are collected in Section~\ref{subsec:int-rep}.

Our first result establishes the global existence and uniqueness of a front solution and determines the short- and long-time behavior of its threshold location.

\begin{theorem}\label{thm:global}
    Under Conditions \eqref{condi:kernel}, \eqref{condi:bistable} and \eqref{condi:init}, there exists a unique front solution to Equation \eqref{eq:main} with \eqref{init:main}.
    It satisfies
    \beaa
        \dlim_{x\to-\infty} u(t,x) = \kappa,\quad \dlim_{x\to+\infty} u(t,x) = 0
    \eeaa
    for any $t\ge 0$.
    Moreover, the threshold location $x^*(t)$ satisfies
    \be\label{xs:short}
        \dlim_{t\to +0} \dfrac{x^*(t)}{t^{\alpha}} = -\frac{\kappa-2u_T}{2u_0'(0)\Gamma(1+\alpha)}
    \ee
    and
    \be\label{xs:long}
        \dlim_{t\to +\infty} \dfrac{x^*(t)}{t} = c^*,
    \ee
    where $c^*$ is the unique solution of 
    \be\label{eq:speed}
        u_T = \int^{+\infty}_{0} g_{\alpha}(r) \cK(cr) dr =: G_{\alpha}(c). \tag{S}
    \ee
\end{theorem}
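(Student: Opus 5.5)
The plan is to use the mild (variation-of-constants) formulation. For the scalar equation $\partial_t^\alpha u = f(t) - u$, the solution is
\[
u(t)=E_\alpha(-t^\alpha)u_0+\int_0^t g_\alpha(t-s)\,f(s)\,ds .
\]
For a front solution with threshold location $x^*(s)$, the Heaviside term equals $\int_{-\infty}^{x^*(s)}K(x-y)\,dy=\mathcal{K}(x-x^*(s))$. So a front solution is exactly a continuous $x^*$ for which
\[
u(t,x)=E_\alpha(-t^\alpha)\,u_0(x)+\int_0^t g_\alpha(t-s)\,\mathcal{K}\bigl(x-x^*(s)\bigr)\,ds
\]
has the sign structure \eqref{sol:front}. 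Since $u_0'<0$ and $\mathcal{K}$ is non-increasing, this $u(t,\cdot)$ is strictly decreasing in $x$ for every continuous $x^*$, because $E_\alpha(-t^\alpha)>0$. The sign structure therefore reduces to the single Volterra equation
\[
\Phi[x^*](t):=E_\alpha(-t^\alpha)\,u_0(x^*(t))+\int_0^t g_\alpha(t-s)\,\mathcal{K}\bigl(x^*(t)-x^*(s)\bigr)\,ds=u_T,\qquad x^*(0)=0 .
\]
Given the past $x^*|_{[0,t)}$, the left side is strictly decreasing in $x^*(t)$. It ranges from $E_\alpha\kappa+\kappa\int_0^t g_\alpha$ down to $0$, and the first value equals $\kappa$ because $\int_0^t g_\alpha=1-E_\alpha(-t^\alpha)$. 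Since $u_T\in(0,\kappa)$, this gives a unique root $X(t;x^*|_{[0,t)})$.

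Existence and uniqueness will come from a fixed-point argument on $C[0,T]$ with $x^*(0)=0$. The implicit map is Lipschitz in the history for the sup norm on $[0,t]$, with constant involving $\|K\|_\infty\int_0^t g_\alpha$ divided by a lower bound on $-\partial_x\Phi$. That lower bound is $E_\alpha(-t^\alpha)|u_0'(x^*)|+\int_0^t g_\alpha K(\cdots)$, and $K(0)>0$ with continuity helps near the diagonal. On short intervals this gives a contraction. To continue globally I need a priori bounds: $|x^*(t)|$ grows at most linearly, obtained from the bounds on $\mathcal{K}$, and the derivative $-\partial_x u(t,x^*(t))$ stays bounded below on compact time sets. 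Both should let the step size be chosen uniformly on $[0,T]$. The limits $u(t,\pm\infty)$ then follow by dominated convergence, since $\mathcal{K}(\pm\infty)$ and $u_0(\pm\infty)$ exist.

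For the short-time limit \eqref{xs:short}, I will first show $x^*(t)\to 0$ and that $\mathcal{K}(x^*(t)-x^*(s))=\kappa/2+o(1)$ uniformly for $s\le t$. Then $\int_0^t g_\alpha\sim t^\alpha/\Gamma(1+\alpha)$ and $E_\alpha(-t^\alpha)=1-t^\alpha/\Gamma(1+\alpha)+o(t^\alpha)$. Expanding $u_0(x^*)=u_T+u_0'(0)x^*+o(x^*)$ gives
\[
u_0'(0)\,x^*(t)+\bigl(\kappa/2-u_T\bigr)\,t^\alpha/\Gamma(1+\alpha)=o(t^\alpha)+o(x^*),
\]
which is \eqref{xs:short}.

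The main obstacle is the long-time limit \eqref{xs:long}. $G_\alpha$ is continuous and strictly decreasing, since $\mathcal{K}$ is non-increasing and strictly so near $0$ because $K(0)>0$. We also have $G_\alpha(0)=\kappa/2$ and $G_\alpha(\pm\infty)=0$ and $\kappa$ respectively, which gives existence and uniqueness of $c^*$. The approach is a comparison argument. Set $\overline{c}=\limsup x^*(t)/t$ and $\underline{c}=\liminf x^*(t)/t$, and suppose $\overline{c}>c^*$. Choose times $t_n$ at which $x^*$ attains a record of $x^*(t)-ct$ for some $c$ with $c^*<c<\overline{c}$, so that $x^*(t_n)-x^*(t_n-r)\ge cr$ for all $r\in[0,t_n]$. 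Substituting $s=t_n-r$ in the Volterra equation and using monotonicity of $\mathcal{K}$ gives
\[
u_T\le E_\alpha(-t_n^\alpha)\,u_0(x^*(t_n))+\int_0^{t_n}g_\alpha(r)\,\mathcal{K}(cr)\,dr\;\longrightarrow\;G_\alpha(c)<G_\alpha(c^*)=u_T,
\]
a contradiction, because $E_\alpha(-t^\alpha)\to0$ and $u_0$ is bounded. The symmetric argument with minimal records excludes $\underline{c}<c^*$. The delicate point is producing such record times when $\overline{c}$ might be infinite or the records might not diverge. I would handle this by first proving linear growth bounds, then taking $t_n=\arg\max_{[0,T_n]}(x^*(t)-ct)$ with $T_n\to\infty$ and checking that $t_n\to\infty$.
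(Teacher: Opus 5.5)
Your proposal is correct. Outside the existence/uniqueness step it follows the paper closely:
\begin{itemize}
\item The short-time expansion is the same computation. Absorbing the $o(x^*)$ term directly, as you do, even removes the need for the bound $x^*(t)=O(t^\alpha)$ that the paper extracts from its construction.
\item Your argmax argument for the speed is a reformulation of the paper's first-contact comparison with the line $qt+a$.
\end{itemize}

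The genuine difference is how the Volterra equation for $x^*$ is solved.

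The paper applies Schauder's theorem to the same root map. It works on the set of $\xi$ with $|\xi(t)|\le R\max\{t,t^\alpha\}$, which it shows is invariant for large $R$. There it only uses that the map is Lipschitz, with constant $\|K\|_{L^\infty}/(f_\alpha(T)\min_{|x|\le M}|u_0'(x)|)$, which need not be below one. Compactness comes from the time modulus $1-f_\alpha(h)$ and Arzel\`a--Ascoli, and uniqueness is proved separately by the fractional Gr\"onwall inequality.

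You instead freeze the history on $[0,\tau]$. On $[\tau,\tau+h]$ the Lipschitz constant then carries the factor $\int_\tau^t g_\alpha(t-s)\,ds\le 1-f_\alpha(h)$, so the map is a contraction for small $h$. This holds uniformly for $\tau\le T$ once the roots stay in a fixed compact set.
\begin{itemize}
\item What it buys: existence and uniqueness at once, with no compactness or Gr\"onwall argument.
\item What it costs: a continuation argument. You must also check that the root depends continuously on $t$, as the paper does when defining its map. You must also check that the iteration set is mapped into itself.
\end{itemize}

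The a priori bound you need is elementary. Choose $a$ with $u_0(a)<u_T/2$ and $q$ with $G_\alpha(q)<u_T/2$. A first-contact argument then gives $x^*(t)<qt+a$ for every solution, and the bound from below is symmetric.

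Finally, the ``delicate point'' you flag in the speed argument is not one. For any finite $c\in(c^*,\overline{c})$, including the case $\overline{c}=+\infty$, the function $x^*(t)-ct$ is unbounded above. Hence $\max_{[0,T_n]}(x^*-ct)\to+\infty$, which forces the argmax times to diverge. No preliminary growth bound is needed.
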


\medskip
The function $G_{\alpha}:\bR\to\bR$ is continuous and strictly decreasing from $\kappa$ to $0$.
Since $\cK(0)=\kappa/2$ and $\int^{+\infty}_{0} g_\alpha(r)\,dr=1$, we can see that $G_{\alpha}(0)=\kappa/2$. Consequently, 
\beaa
    c^*>0 \quad\Longleftrightarrow\quad \kappa > 2u_T,
\eeaa
and analogous equivalences hold for $c^*=0$ and $c^*<0$.

The short-time expansion \eqref{xs:short} shows that, when $\kappa\neq 2u_T$, the initial displacement of the interface is of order $t^\alpha$.
This fractional short-time scale is distinct from the linear scale associated with classical traveling fronts. 
The proof of Theorem~\ref{thm:global} is given in Section~\ref{sec:global}.

We next formulate the asymptotic traveling-wave problem.
\begin{definition}\label{def:atw}
    A pair $(c,\phi)\in \bR\times BC^1(\bR)$ is called an 
    \textbf{asymptotic traveling wave} to Equation~\eqref{eq:main} if it satisfies Equation~\eqref{eq:atw}.
    Here, we define $D^{\alpha}_c \phi(z) \equiv 0$ when $c=0$.
\end{definition}

\medskip
The next theorem gives an explicit representation of the asymptotic traveling wave and establishes its uniqueness in the nonzero-speed case.
\begin{theorem}\label{thm:atw}
    Under Conditions \eqref{condi:kernel} and \eqref{condi:bistable}, let 
    \be\label{phi}
        \phi(z;c) := \int^{+\infty}_{0} g_{\alpha}(r)\cK(z + c r) dr.
    \ee
    Then, $(c^*, \phi(z;c^*))$ is an asymptotic traveling wave to Equation~\eqref{eq:main} with the non-increasing profile satisfying
    \be\label{boundary}
        \phi(-\infty) = \kappa,\quad \phi(+\infty) = 0.
    \ee
    Here, $c^*$ is the unique solution of \eqref{eq:speed}.
    
    Under the condition $\kappa \neq 2u_T$, suppose that $(c,\psi)$ is an asymptotic traveling wave to Equation~\eqref{eq:main} satisfying \eqref{boundary}.
    Then we have $c=c^*$ and $\psi(z) = \phi(z-z_0;c^*)$ for some $z_0\in\bR$.
\end{theorem}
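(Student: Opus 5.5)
Both parts rest on the observation that for $c\neq0$ and bounded $f$, $\phi=g_\alpha\ast$ applied along the characteristic is exactly the solution operator of $D^\alpha_c\phi+\phi=f$. We first show that $\phi(z;c)$ from \eqref{phi} solves $D^\alpha_c\phi+\phi=\cK(z)$ whenever $c\neq0$, and that $(c^*,\phi(\cdot;c^*))$ then solves \eqref{eq:atw}; afterwards we prove uniqueness by reducing any wave to the same linear equation.

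\emph{Existence.} For $c>0$, write $\phi(z)=\int_0^\infty g_\alpha(r)\cK(z+cr)\,dr$. The change of variables $s=cr$ turns $\phi$ into $\int_0^\infty c^{-1}g_\alpha(s/c)\,\cK(z+s)\,ds$. We take the Laplace transform in the variable $s$ along the characteristic: the operator $D^\alpha_c$ acts as $\phi\mapsto c^\alpha\,\partial_s^\alpha$, a Caputo-type derivative in the forward direction. The identity $\mathcal L[g_\alpha](\lambda)=1/(\lambda^\alpha+1)$ then gives $(c^\alpha\partial^\alpha+1)^{-1}f=\int_0^\infty g_\alpha(r)f(z+cr)\,dr$. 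To avoid formal transforms, we verify this directly with the standard identity $g_\alpha=-\frac{d}{dt}E_\alpha(-t^\alpha)$: we compute $\phi'(z)=-\int_0^\infty g_\alpha(r)K(z+cr)\,dr$, insert it into $D^\alpha_c\phi$, and use Fubini together with the fractional relation ${}^{RL}I^{1-\alpha}g_\alpha=E_\alpha(-t^\alpha)$ and $E_\alpha(-t^\alpha)=1-\int_0^t g_\alpha$. Collecting terms yields $D^\alpha_c\phi=\cK(z)-\phi(z)$. The case $c<0$ follows by reflection $z\mapsto -z$, using the evenness of $K$. The boundary values \eqref{boundary} follow from dominated convergence and $\int g_\alpha=1$, and monotonicity follows from that of $\cK$. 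Finally, $\phi(0;c^*)=G_\alpha(c^*)=u_T$ and $\phi$ is non-increasing. Provided $\phi$ is strictly decreasing near $0$ (which holds since $K(0)>0$ and $K$ is continuous, so $\cK$ is strictly decreasing near $0$), we get $H(\phi(y)-u_T)=\mathbf 1_{y<0}$. Then $\int K(z-y)H(\phi(y)-u_T)\,dy=\cK(z)$, which proves \eqref{eq:atw}. If $c^*=0$, then $\kappa=2u_T$ and $\phi=\cK$ solves \eqref{eq:atw} with $D^\alpha_0\equiv0$.

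\emph{Uniqueness.} Let $(c,\psi)$ satisfy \eqref{eq:atw} and \eqref{boundary}, with $\kappa\neq2u_T$. First, $c\neq0$: if $c=0$ then $\psi=K\ast H(\psi-u_T)$. The key step, which I expect to be the main obstacle, is showing that the superlevel set $\{\psi>u_T\}$ is a half-line $(-\infty,z_0)$ up to a null set. Without monotonicity of $\psi$ this is not immediate. My first attempt is to set $f:=K\ast H(\psi-u_T)$, a bounded function with $f(-\infty)=\kappa$ and $f(+\infty)=0$. I would then invert the linear equation $D^\alpha_c\psi+\psi=f$ for bounded $C^1$ solutions. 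The resolvent above gives a particular solution, and I would argue that the homogeneous equation $D^\alpha_c h+h=0$ has no nonzero bounded $C^1$ solution, by Fourier transform: the symbol $c^\alpha(\mp i\xi)^\alpha+1$ does not vanish for real $\xi$. This yields $\psi(z)=\int_0^\infty g_\alpha(r)f(z+cr)\,dr$.

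Knowing this representation, I would attempt a comparison/sliding argument to show that the set $A=\{\psi>u_T\}$ is a half-line. Suppose $A$ is not of that form. The representation makes $\psi$ a positive average of translates of $K\ast\mathbf 1_A$, and I would try to derive a contradiction by translating and comparing at the first touching point. If this fails, I would settle for proving uniqueness within non-increasing profiles, which the sliding argument gives directly. Once $A=(-\infty,z_0)$, we have $f=\cK(\cdot-z_0)$, so $\psi=\phi(\cdot-z_0;c)$ and $u_T=\psi(z_0)=G_\alpha(c)$. Strict monotonicity of $G_\alpha$ then forces $c=c^*$. In the case $c=0$, the same argument gives $\psi=\cK(\cdot-z_0)$ and $u_T=\kappa/2$, contradicting $\kappa\neq2u_T$.
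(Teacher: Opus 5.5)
Your existence argument is correct and is essentially the paper's Lemma~\ref{lem:phi}: Fubini, the identity $\frac{1}{\Gamma(1-\alpha)}\int_0^r(r-q)^{-\alpha}g_\alpha(q)\,dq=f_\alpha(r)$, integration by parts, and $K(0)>0$ for the strict crossing at $0$. The uniqueness part, however, has a genuine gap exactly where you expect one: you never prove that $A=\{\psi>u_T\}$ is a half-line. The ``sliding/first touching point'' idea is not carried out. Your fallback, uniqueness among non-increasing profiles, is a strictly weaker statement, because the theorem only assumes $\psi\in BC^1$ with the limits \eqref{boundary}. Your exclusion of $c=0$ rests on the same missing step. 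There the plan ``first show $A$ is a half-line, then read off $u_T=\kappa/2$'' cannot work unless $\kappa\neq2u_T$ is used from the start. Example~\ref{exam:non-unique} gives stationary solutions with $\{\psi>u_T\}=(-\infty,0)\cup(l_1,l_2)$ when $\kappa=2u_T$, so any such proof must in effect show that no stationary $\psi$ exists at all.

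The missing idea is to compare $\psi$ with translates of the explicit profile at two extremal threshold points, \emph{before} knowing anything about $A$.
\begin{itemize}
\item Let $z_l$ be the supremum of those $z^*$ with $\psi>u_T$ on $(-\infty,z^*)$, and $z_r$ the infimum of those $z^*$ with $\psi<u_T$ on $(z^*,\infty)$. These are finite, $\psi(z_l)=\psi(z_r)=u_T$, and $H(z_l-y)\le H(\psi(y)-u_T)\le H(z_r-y)$ for a.e.\ $y$.
\item For $c=0$ this already gives $u_T=\psi(z_l)\ge\kappa/2\ge\psi(z_r)=u_T$. This forces $\kappa=2u_T$, a contradiction.
\item For $c\neq0$, the functions $v_l:=\psi-\phi(\cdot-z_l;c)$ and $v_r:=\phi(\cdot-z_r;c)-\psi$ satisfy $D^\alpha_c v+v\ge0$ and vanish at $\pm\infty$. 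The maximum principle of Lemma~\ref{lem:com}, proved from the Marchaud form of $D^\alpha_c$, gives $v_l,v_r\ge0$. Evaluating at $z_l$ and $z_r$ yields $G_\alpha(c)\le u_T\le G_\alpha(c)$, hence $c=c^*$.
\item Only now does the half-line structure follow. For $c^*>0$, $v_l(z_l)=0$ and the one-sided strong maximum principle give $\psi=\phi(\cdot-z_l;c^*)<u_T$ on $(z_l,\infty)$. So $A=(-\infty,z_l)$, the equation for $v_l$ becomes homogeneous, and $v_l\equiv0$. When $c^*<0$, use $z_r$ instead.
\end{itemize}

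Your own representation could also do this job. For $c^*>0$, write $\psi(z)=\int_A\Psi(z-y)\,dy$ with $\Psi(x)=\int_0^\infty g_\alpha(r)K(x+cr)\,dr\ge0$. This gives both comparisons directly, and since $K>0$ near $0$ we have $\Psi>0$ on $(-\infty,0]$. That turns $\psi(z_l)=\phi(0;c^*)$ into $|A\cap(z_l,\infty)|=0$. But it first requires justifying $\psi(z)=\int_0^\infty g_\alpha(r)f(z+cr)\,dr$ with $f=K\ast H(\psi-u_T)$, and your Fourier sketch does not yet do this rigorously. The symbol $(\mp i\xi)^\alpha$ is not smooth at $\xi=0$, and you verified the resolvent identity only for $f=\cK$. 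The paper's maximum-principle route avoids this inversion altogether.
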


The explicit representation \eqref{phi} also yields the symmetry relation 
\be\label{inverse}
    \phi(z;c) = \kappa-\phi(-z;-c).
\ee
When $\kappa=2u_T$, equivalently $c^*=0$, uniqueness need not hold in general; see Example~\ref{exam:non-unique}.
The proof of Theorem~\ref{thm:atw} is given in Section~\ref{sec:atw}.
The kernel dependence of the wave profile is also investigated in Section~\ref{sec:ker}.

Our final main result shows that the asymptotic traveling wave constructed above is realized as the long-time limit of the front solution from Theorem~\ref{thm:global}.

\begin{theorem}\label{thm:converge}
    Under Conditions \eqref{condi:kernel}, \eqref{condi:bistable} and \eqref{condi:init}, the front solution $u(t,x)$ to Equation~\eqref{eq:main} with \eqref{init:main} satisfies
    \beaa
        \dlim_{t\to +\infty}\sup_{z\in\bR} |u(t,x^*(t)+z) - \phi(z;c^*)| = 0,
    \eeaa
    where $\phi(z;c)$ is defined by \eqref{phi}, and $c^*$ is the unique solution of Equation~\eqref{eq:speed}.
\end{theorem}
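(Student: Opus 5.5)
The plan is to use the mild (Duhamel) representation of Section~\ref{subsec:int-rep} together with the front structure \eqref{sol:front}. Since $H(u(s,y)-u_T)=\mathbf 1_{\{y<x^*(s)\}}$, the nonlinearity equals $\int_{\bR}K(x-y)H(u(s,y)-u_T)\,dy=\cK(x-x^*(s))$. The mild formulation then reads
\begin{equation*}
u(t,x)=E_\alpha(-t^\alpha)u_0(x)+\int_0^t g_\alpha(r)\,\cK\bigl(x-x^*(t-r)\bigr)\,dr .
\end{equation*}
Evaluating at $x=x^*(t)+z$ gives
\begin{equation*}
u(t,x^*(t)+z)=E_\alpha(-t^\alpha)u_0(x^*(t)+z)+\int_0^t g_\alpha(r)\,\cK\bigl(z+x^*(t)-x^*(t-r)\bigr)\,dr .
\end{equation*}
This is to be compared with $\phi(z;c^*)=\int_0^\infty g_\alpha(r)\cK(z+c^*r)\,dr$.

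\textbf{Reduction.} Split the difference $u(t,x^*(t)+z)-\phi(z;c^*)$ into three parts.
\begin{itemize}
\item[(i)] The initial-data term is bounded by $\kappa E_\alpha(-t^\alpha)$, which tends to $0$.
\item[(ii)] The tail part is bounded by $\kappa\int_R^\infty g_\alpha(r)\,dr$, which is small for $R$ large since $g_\alpha\in L^1$ with integral $1$.
\item[(iii)] The main part is $\int_0^R g_\alpha(r)\bigl|\cK(z+x^*(t)-x^*(t-r))-\cK(z+c^*r)\bigr|\,dr$. Since $|\cK'|=K\le\|K\|_\infty$, this is at most $\|K\|_\infty\sup_{0\le r\le R}|x^*(t)-x^*(t-r)-c^*r|$, uniformly in $z$.
\end{itemize}
All three bounds are uniform in $z$. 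Theorem~\ref{thm:converge} therefore reduces to the asymptotic linearity of local increments: for every $R>0$,
\begin{equation*}
\lim_{t\to\infty}\sup_{0\le r\le R}\bigl|x^*(t)-x^*(t-r)-c^*r\bigr|=0 .
\end{equation*}
Note that \eqref{xs:long} alone, $x^*(t)/t\to c^*$, does not imply this.

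\textbf{Compactness.} Next I would obtain a uniform Lipschitz bound on $x^*$ for $t\ge1$. The threshold identity $u(t,x^*(t))=u_T$ reads
\begin{equation*}
u_T=E_\alpha(-t^\alpha)u_0(x^*(t))+\int_0^t g_\alpha(r)\,\cK\bigl(x^*(t)-x^*(t-r)\bigr)\,dr .
\end{equation*}
I would differentiate this implicitly, or use difference quotients from the Volterra equation of Section~\ref{sec:global}, and bound $|\partial_x u|$ from below near the threshold. I expect this lower bound to come from $K(0)>0$ and the small-$r$ contribution of $g_\alpha$. Given such a bound, take any sequence $t_n\to\infty$ and set $y_n(s):=x^*(t_n+s)-x^*(t_n)$. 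By Arzel\`a--Ascoli, $y_n\to y$ locally uniformly along a subsequence. Passing to the limit in the threshold identity, using dominated convergence with the integrable weight $g_\alpha$, gives an entire-time relation:
\begin{equation*}
u_T=\int_0^\infty g_\alpha(r)\,\cK\bigl(y(s)-y(s-r)\bigr)\,dr\qquad(s\in\bR),
\end{equation*}
with $y(0)=0$ and $y$ Lipschitz.

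\textbf{Rigidity (main obstacle).} The key step is to show that every such entire solution $y$ is linear, i.e.\ $y(s)=c^*s$. The conclusion then follows: the limit is independent of the subsequence, so the increments converge locally uniformly. To prove linearity I would set $\overline m:=\sup_{s\in\bR,\,h>0}\,(y(s)-y(s-h))/h$, which is finite by the Lipschitz bound, and similarly $\underline m$ with the infimum. Suppose the supremum is approached along $(s_k,h_k)$. Re-centering the compactness argument at $s_k$ produces a new entire solution in which the extremal increment is attained. Then $y(s)-y(s-r)\le\overline m\,r$ for all $r$, and strict monotonicity of $\cK$ on a neighbourhood of the relevant arguments (from $K(0)>0$) gives
\begin{equation*}
u_T\ge\int_0^\infty g_\alpha(r)\cK(\overline m\,r)\,dr=G_\alpha(\overline m).
\end{equation*}
Equality forces $y(s)-y(s-r)=\overline m\,r$ for $r$ near $0$, and then propagates to all $r$. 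Hence $G_\alpha(\overline m)=u_T$, and since $G_\alpha$ is strictly decreasing, $\overline m=c^*$. Symmetrically $\underline m=c^*$, so $y(s)=c^*s$.

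The delicate points are these. First, ensuring the extremal slope is actually attained, which may require handling $h_k\to0$ or $h_k\to\infty$ separately; in the latter case \eqref{xs:long} gives slope $c^*$ directly. Second, where $K$ vanishes, strict monotonicity of $\cK$ holds only locally. That is the step I would check first.
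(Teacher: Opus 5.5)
Your reduction of Theorem~\ref{thm:converge} to the asymptotic linearity of the increments $x^*(t)-x^*(t-r)$ is the same as the paper's: the same three-term splitting, the Lipschitz bound on $\cK$, and the tail of $g_\alpha$. However, neither of the two steps meant to deliver that linearity goes through as proposed.

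\textbf{Compactness.} This step rests on a uniform Lipschitz bound for $x^*$ on $[1,\infty)$, which you never establish, and the route you sketch is circular. The term $f_\alpha(t)u_0'(x^*(t))$ gives no lower bound uniform in $t$, because $f_\alpha(t)\to0$. So a lower bound on $|\partial_x u(t,x^*(t))|$ must come from $\int_0^t g_\alpha(r)K\bigl(x^*(t)-x^*(t-r)\bigr)\,dr$. Bounding this from below needs a priori control of the small-$r$ increments of $x^*$, which must stay where $K$ is bounded below. That control is exactly the equicontinuity you are trying to prove.

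The paper needs no regularity of $x^*$. The ingredient missing from your plan is monotonicity. Lemma~\ref{lem:positive} shows, by a first-hitting-time argument and the fractional Gr\"onwall inequality, that $x^*$ is strictly monotone when $\kappa\neq2u_T$; the balanced case is trivial since $x^*\equiv0$. Monotonicity plus the equation gives a bound $0\le x^*(s)-x^*(s-h)\le R$ over a fixed step $h$ for all large $s$. Helly's theorem then gives a monotone limit of the translates. Jumps of the limit are excluded by subtracting the limiting equation on the two sides of a jump and using $g_\alpha>0$ and $K\ge k_0$ near $0$ (Lemma~\ref{lem:compactness}).

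\textbf{Rigidity.} This step is logically incomplete. From $y(s)-y(s-r)\le\overline m\,r$ and the monotonicity of $\cK$ you only get $u_T\ge G_\alpha(\overline m)$, i.e.\ $\overline m\ge c^*$. Symmetrically you get $\underline m\le c^*$. These inequalities hold for every solution and force nothing. Your sentence ``Equality forces\dots'' assumes the identity $G_\alpha(\overline m)=u_T$ that is to be proved, and the attainment of the extremal increment is never actually used.

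The idea that closes the argument (Lemma~\ref{lem:entire}) is to compare the equation with its own time translate.
\begin{itemize}
\item Fix $\theta>0$, and let $\mu$ be the supremum (or infimum) over $s$ of $\Delta(s)=Q(s+\theta)-Q(s)$, attained at $s_0$ for a limit $Q$ of translates.
\item Set $A(r)=Q(s_0)-Q(s_0-r)$ and $B(r)=Q(s_0+\theta)-Q(s_0+\theta-r)$. Subtracting the equation at $s_0$ from the equation at $s_0+\theta$ gives $\int_0^\infty g_\alpha(r)\bigl[\cK(B(r))-\cK(A(r))\bigr]\,dr=0$ with a one-signed integrand.
\item Strict monotonicity of $\cK$ near $0$ then gives $A=B$ for small $r$. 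Hence $\Delta\equiv\mu$ propagates to $(-\infty,s_0]$, and $Q(s)-(\mu/\theta)s$ is $\theta$-periodic there.
\item Evaluating the equation at the maximum and the minimum of this periodic part yields both $u_T\le G_\alpha(\mu/\theta)$ and $u_T\ge G_\alpha(\mu/\theta)$, so $\mu=c^*\theta$.
\end{itemize}
Working with a fixed step $\theta$ also makes $\sup_s\Delta(s)$ finite without any Lipschitz bound, and it removes your case analysis for $h_k\to0$ or $h_k\to\infty$. In particular, your fallback for $h_k\to\infty$ via \eqref{xs:long} is not valid: $x^*(t)/t\to c^*$ gives no control of $(x^*(t)-x^*(t-h))/h$ uniformly in $t$.
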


Theorem~\ref{thm:converge} gives a rigorous connection between the original Cauchy problem and the formal limiting equation \eqref{eq:atw}.
It shows that the asymptotic traveling wave is not merely
a solution of the limiting profile equation, but is realized
as the long-time profile of the evolving front.
For fixed $K$, $\alpha$, and $u_T$, every initial datum
satisfying \eqref{condi:init} leads to the same limiting profile after recentering at the threshold location.
The proof of Theorem~\ref{thm:converge} is given in Section~\ref{sec:convergence}.

In the balanced case $\kappa=2u_T$, the convergence rate can be determined explicitly (see Remark~\ref{rem:balanced-rate}).
For nonzero speeds, no quantitative convergence rate is established here for general initial data satisfying \eqref{condi:init}.
An explicit $O(t^{-\alpha})$ estimate is nevertheless obtained in Proposition~\ref{prop:ct} for a separate class of specially chosen, continuous, generally nonsmooth initial data.

\bigskip
\section{Existence of global front solutions}\label{sec:global}

Throughout this section, we assume that Conditions~\eqref{condi:kernel}, \eqref{condi:bistable}, and \eqref{condi:init} hold. 
We first reduce the Cauchy problem to a nonlinear Volterra equation for the threshold location. 
We then establish the existence and uniqueness of a global solution to this equation.
The remaining subsections are devoted to the short- and long-time behavior of the threshold location and to the asymptotic linearity of its local increments.

\medskip
\subsection{Integral representation and nonlinear Volterra equation}\label{subsec:int-rep}

We begin with the standard variation-of-constants formula for a scalar time-fractional equation.
According to \cite{Diethelm}, the following result is known.

\begin{lemma}\label{lem:rep}
    Let $h\in C[0,+\infty)$ and $v_0\in\bR$. 
    The solution $v$ of 
    \be\label{pro:v}
        \begin{cases}
            \partial^{\alpha}_{t} v = - v + h(t) &(t>0) \\
            v(0) = v_0
        \end{cases}
    \ee
    is represented by
    \be\label{rep:v}
        v(t) = f_{\alpha}(t) v_0 + \int^{t}_{0} g_{\alpha}(t-s) h(s) ds \quad (t\ge 0),
    \ee
    where $f_{\alpha}(t) := E_{\alpha}(- t^{\alpha})$.

    Conversely, if $v\in C[0,+\infty)$ satisfies \eqref{rep:v}, then $v$ is a mild solution of \eqref{pro:v}.
\end{lemma}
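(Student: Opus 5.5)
The plan is to prove the representation formula \eqref{rep:v} by Laplace transform, and then to check the converse by verifying the equivalent Volterra form directly. First I fix the notion of mild solution. Applying the Riemann--Liouville integral $I^\alpha$ of order $\alpha$ to \eqref{pro:v}, and using $I^\alpha\partial_t^\alpha v = v - v_0$ for sufficiently regular $v$, a mild solution of \eqref{pro:v} is a $v\in C[0,+\infty)$ with
\begin{equation*}
v(t) = v_0 + \frac{1}{\Gamma(\alpha)}\int_0^t (t-s)^{\alpha-1}\bigl(-v(s)+h(s)\bigr)\,ds \qquad (t\ge 0).
\end{equation*}
This is a linear Volterra equation of the second kind with weakly singular kernel $(t-s)^{\alpha-1}/\Gamma(\alpha)\in L^1_{loc}$. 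Standard Picard iteration on $C[0,T]$, using a weighted sup norm or successive small intervals, gives a unique continuous solution on every $[0,T]$. So it suffices to show that the right-hand side of \eqref{rep:v} solves this integral equation.

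To derive \eqref{rep:v}, I take Laplace transforms, first assuming $h$ has at most exponential growth; the general case follows by truncating $h$ beyond $T$, since the values on $[0,T]$ depend only on $h|_{[0,T]}$. The transform of $I^\alpha w$ is $\lambda^{-\alpha}\hat w$. Hence
\begin{equation*}
\hat v(\lambda) = \frac{\lambda^{\alpha-1}}{\lambda^\alpha+1}v_0 + \frac{1}{\lambda^\alpha+1}\hat h(\lambda).
\end{equation*}
I then use the known transforms $\mathcal{L}[E_\alpha(-t^\alpha)](\lambda)=\lambda^{\alpha-1}/(\lambda^\alpha+1)$ and $\mathcal{L}[t^{\alpha-1}E_{\alpha,\alpha}(-t^\alpha)](\lambda)=1/(\lambda^\alpha+1)$ for $\lambda>1$. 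Both follow by termwise integration of the series together with $\int_0^\infty e^{-\lambda t}t^{\alpha n+\beta-1}dt=\Gamma(\alpha n+\beta)\lambda^{-\alpha n-\beta}$, which gives a geometric series in $-\lambda^{-\alpha}$. The convolution theorem and Lerch's uniqueness theorem for continuous functions then yield \eqref{rep:v}.

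For the converse, I suppose $v\in C[0,+\infty)$ satisfies \eqref{rep:v}. Transforming \eqref{rep:v} and reversing the algebra above shows that $v$ satisfies the integral equation, hence is the mild solution. A direct alternative avoids transforms. One checks termwise the identities $I^\alpha f_\alpha = 1-f_\alpha$ and $I^\alpha g_\alpha = 1 - f_\alpha$, the latter also written as $I^\alpha g_\alpha = \int_0^t g_\alpha$. One also uses $g_\alpha + I^\alpha g_\alpha' $-type relations, namely $g_\alpha = -f_\alpha'$, which is immediate from differentiating the series. Then Fubini gives $I^\alpha(g_\alpha*h) = (I^\alpha g_\alpha)*h = h*1 - g_\alpha*h$ after integrating $g_\alpha=-f_\alpha'$. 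Substituting these identities closes the equation.

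The main obstacle is justifying the termwise operations near $t=0$, where $g_\alpha\sim t^{\alpha-1}/\Gamma(\alpha)$ is singular. I would handle this by noting that the Mittag--Leffler series converges absolutely and locally uniformly. Dominated convergence then applies with the dominating function $t^{\alpha-1}E_{\alpha,\alpha}(t^\alpha)$, which is locally integrable. Since the result is classical, I would ultimately cite \cite{Diethelm} for these details.
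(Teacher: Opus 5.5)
Your main route is correct. You reduce to the Volterra form $v=v_0+I^\alpha(h-v)$, get uniqueness there by Picard iteration, and then verify via Laplace transforms and Lerch's theorem that $w:=f_\alpha v_0+g_\alpha*h$ satisfies that equation. Replace $h$ by, e.g., $h(\min\{t,T\})$ rather than cutting it off, so that $w$ is bounded and transformable. This is the classical argument; the paper gives no proof of its own and simply cites \cite{Diethelm}. Making the rigorous step ``verify the explicit $w$'' rather than ``transform the equation for $v$'' is the right choice, since it needs no a priori exponential bound on $v$.

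The ``direct alternative'' paragraph, however, rests on a false identity. For $\alpha\in(0,1)$, $I^\alpha g_\alpha\neq 1-f_\alpha$: the relation $\int_0^t g_\alpha=1-f_\alpha$ is $I^1 g_\alpha$, not $I^\alpha g_\alpha$. Their transforms are $\lambda^{-1}(\lambda^\alpha+1)^{-1}$ and $\lambda^{-\alpha}(\lambda^\alpha+1)^{-1}$ respectively. Termwise one gets
\[
I^\alpha g_\alpha(t)=\frac{t^{\alpha-1}}{\Gamma(\alpha)}-g_\alpha(t),
\qquad\text{hence}\qquad
I^\alpha(g_\alpha*h)=I^\alpha h-g_\alpha*h .
\]
With your version, $I^\alpha(g_\alpha*h)=1*h-g_\alpha*h$, substituting into $v_0+I^\alpha(h-w)$ leaves the residual $I^\alpha h-1*h$. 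This is nonzero in general (take $h\equiv1$), so the equation does not close. With the corrected identity and $I^\alpha f_\alpha=1-f_\alpha$ (which you have right), it closes immediately:
\[
v_0+I^\alpha h-v_0(1-f_\alpha)-\bigl(I^\alpha h-g_\alpha*h\bigr)=f_\alpha v_0+g_\alpha*h=w .
\]
Either correct that paragraph this way or drop it; the Laplace argument does not depend on it.
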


Accordingly, throughout this paper, a solution of the Cauchy problem~\eqref{eq:main}--\eqref{init:main} is understood to be a mild solution unless otherwise stated.
Namely, $u\in C([0,\infty)\times\bR)$ is a mild solution if
\beaa
u(t,x) = f_{\alpha}(t) u_0(x) + \int^{t}_{0} g_{\alpha}(t-s)\int_{\bR} K(x-y) H(u(s,y)-u_T) dy ds
\eeaa
for all $t\ge 0$ and $x\in\bR$.

Suppose that $u$ is a front solution with threshold location $x^*(t)$. 
Then, every front solution satisfies
\be\label{sol:mild}
u(t,x) = f_{\alpha}(t) u_0(x) + \int^{t}_{0} g_{\alpha}(t-s)\cK(x-x^*(s)) ds.
\ee
For a continuous function $\xi\in C([0,\infty))$ satisfying $\xi(0)=0$, we therefore define 
\beaa
    U(t,x;\xi) := f_{\alpha}(t) u_0(x) + \int^t_{0} g_\alpha(t-s)\cK\bigl(x-\xi(s)\bigr)\,ds.  
\eeaa

We shall repeatedly use the following standard properties of the Mittag--Leffler functions.
\begin{lemma}\label{lem:ML}
    The following assertions hold.
    \begin{itemize}
        \item[(i)] $f_{\alpha}(t)$ is a continuous positive function on $[0,+\infty)$ satisfying
        \beaa
            \dlim_{t\to +\infty} t^{\alpha}  f_{\alpha}(t) = \dfrac{1}{\Gamma(1-\alpha)}.
        \eeaa
        Moreover, $f_{\alpha}(t)$ is a completely monotone function on $[0,+\infty)$;

        \item[(ii)] $g_{\alpha}(t)$ is a continuous positive function on $(0,+\infty)$ satisfying
        \be\label{ieq:galpha}
            g_{\alpha}(t) \le \dfrac{t^{\alpha-1}}{\Gamma(\alpha)}\qquad (t>0)
        \ee
        and
        \be\label{eq:g2f}
            \int^{t}_{0} g_{\alpha}(s) ds = 1 - f_{\alpha}(t).
        \ee
        It also holds that $g_{\alpha}(t)= - f'_{\alpha}(t)$;

        \item[(iii)] it holds that
        \beaa
            \frac{1}{\Gamma(1-\alpha)} \int^r_{0} (r-q)^{-\alpha}g_\alpha(q)\,dq = f_\alpha(r).
        \eeaa
    \end{itemize}
\end{lemma}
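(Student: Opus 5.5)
We prove the three items of Lemma~\ref{lem:ML} through Laplace transforms and the standard series and integral representations of the Mittag--Leffler functions. The key formulas, valid for $\alpha\in(0,1)$ and $\operatorname{Re}\lambda>1$, are
\beaa
\int_0^\infty e^{-\lambda t} f_\alpha(t)\,dt=\frac{\lambda^{\alpha-1}}{\lambda^\alpha+1},\qquad \int_0^\infty e^{-\lambda t} g_\alpha(t)\,dt=\frac{1}{\lambda^\alpha+1}.
\eeaa
Both follow by integrating the power series term by term and using $\int_0^\infty e^{-\lambda t}t^{\beta-1}dt=\Gamma(\beta)\lambda^{-\beta}$. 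From them, the remaining identities reduce to algebra in the Laplace variable.

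We first record the relation between $f_\alpha$ and $g_\alpha$. Differentiating the series $f_\alpha(t)=\sum_{n\ge0}(-1)^n t^{\alpha n}/\Gamma(\alpha n+1)$ termwise for $t>0$ and using $\alpha n/\Gamma(\alpha n+1)=1/\Gamma(\alpha n)$ gives
\beaa
f_\alpha'(t)=\sum_{n\ge1}\frac{(-1)^n t^{\alpha n-1}}{\Gamma(\alpha n)}=-t^{\alpha-1}\sum_{m\ge0}\frac{(-1)^m t^{\alpha m}}{\Gamma(\alpha m+\alpha)}=-g_\alpha(t).
\eeaa
Integrating from $0$ to $t$ with $f_\alpha(0)=1$ yields \eqref{eq:g2f}. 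Item (iii) follows from the Laplace transform. The left-hand side is the convolution of $t^{-\alpha}/\Gamma(1-\alpha)$ with $g_\alpha$, so its transform is $\lambda^{\alpha-1}\cdot(\lambda^\alpha+1)^{-1}$, which is exactly the transform of $f_\alpha$. Both sides are continuous, so Lerch's theorem gives equality pointwise. Equivalently, (iii) says $\partial_t^\alpha f_\alpha=-f_\alpha$ written in integrated form.

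Next comes complete monotonicity and positivity, where the Bernstein representation carries the argument. Inverting the Laplace transform along a Hankel contour and collapsing it onto the negative real axis, which is legitimate because $\lambda^\alpha+1$ has no zeros in the cut plane when $\alpha<1$, we obtain
\beaa
f_\alpha(t)=\int_0^\infty e^{-rt}\,\frac{\sin(\pi\alpha)}{\pi}\,\frac{r^{\alpha-1}}{r^{2\alpha}+2r^\alpha\cos(\pi\alpha)+1}\,dr .
\eeaa
The density is positive, so $f_\alpha$ is completely monotone and positive on $[0,\infty)$. Continuity on $[0,\infty)$ follows from the series. Since $g_\alpha=-f_\alpha'=\int_0^\infty re^{-rt}(\cdots)dr$, we get $g_\alpha>0$, and continuity on $(0,\infty)$ again comes from the series. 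The asymptotics come from the same density. Near $r=0$ it behaves like $\frac{\sin\pi\alpha}{\pi}r^{\alpha-1}$, so Watson's lemma gives
\beaa
t^\alpha f_\alpha(t)\to\frac{\sin(\pi\alpha)}{\pi}\,\Gamma(\alpha)=\frac{1}{\Gamma(1-\alpha)}
\eeaa
by the reflection formula. Integrability of $g_\alpha$ on $(0,\infty)$ follows from \eqref{eq:g2f} and $f_\alpha\to0$, and in particular $\int_0^\infty g_\alpha=1$.

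The main obstacle is the bound \eqref{ieq:galpha}, namely $E_{\alpha,\alpha}(-t^\alpha)\le 1/\Gamma(\alpha)$. We plan to prove it by showing that $t\mapsto E_{\alpha,\alpha}(-t^\alpha)$ is nonincreasing and then using its value $1/\Gamma(\alpha)$ at $t=0$. For monotonicity we use the fact that $E_{\alpha,\alpha}(-x)$ is completely monotone in $x\ge0$, hence decreasing in $x$, since $x=t^\alpha$ is increasing in $t$. We intend to cite this complete monotonicity, which is a result of Schneider type valid for $0<\alpha\le\beta\le1$. If a self-contained route is preferred, we would instead derive an explicit Stieltjes-type integral representation of $E_{\alpha,\alpha}(-x)$ with a nonnegative density through the same Hankel-contour argument, and read off monotonicity from the positivity of that density.
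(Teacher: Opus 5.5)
Your proposal is correct and follows the paper's argument in substance. You obtain $g_\alpha=-f_\alpha'$ and \eqref{eq:g2f} by termwise differentiation of the series, (iii) by Laplace transforms, and \eqref{ieq:galpha} from the monotonicity of $x\mapsto E_{\alpha,\alpha}(-x)$ (Schneider) together with $E_{\alpha,\alpha}(0)=1/\Gamma(\alpha)$. The only difference is in (i): you derive the positivity, complete monotonicity and $t^{-\alpha}$ asymptotics of $f_\alpha$ from the explicit Bernstein density (contour collapse plus Watson's lemma), where the paper simply cites the literature.

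Your optional fallback for \eqref{ieq:galpha} would not work as described. The Hankel-contour density of $E_{\alpha,\alpha}(-x)$, namely $\frac{\sin\pi\alpha}{\pi}\,r^{\alpha}e^{-r}/(r^{2\alpha}+2xr^{\alpha}\cos\pi\alpha+x^{2})$, is increasing in $x$ for $0<x<r^{\alpha}|\cos\pi\alpha|$ when $\alpha>1/2$, so monotonicity cannot be read off from its positivity. A self-contained proof instead follows from the resolvent identity $\frac{t^{\alpha-1}}{\Gamma(\alpha)}-g_\alpha(t)=\int_0^t\frac{(t-s)^{\alpha-1}}{\Gamma(\alpha)}g_\alpha(s)\,ds$, which in Laplace variables reads $\lambda^{-\alpha}-(\lambda^\alpha+1)^{-1}=\lambda^{-\alpha}(\lambda^\alpha+1)^{-1}$. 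Its right-hand side is positive by the positivity of $g_\alpha$ you already established, so \eqref{ieq:galpha} follows without Schneider's theorem.
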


\medskip
The positivity and complete monotonicity of $f_{\alpha}(t)$ follow from \cite{Schneider}. 
The asymptotic formula can be found in \cite[Section~6]{HMS}. Identity \eqref{eq:g2f} follows directly from the definition of the Mittag--Leffler function.
Inequality~\eqref{ieq:galpha} is obtained from the monotonicity of $E_{\alpha,\alpha}$ and $E_{\alpha,\alpha}(0)=1/\Gamma(\alpha)$.
Identity~(iii) follows by taking Laplace transforms.

We next introduce the basic properties of $U$.
\begin{lemma}\label{lem:U}
    Suppose that $\xi\in C[0,+\infty)$ satisfies $\xi(0)=0$.
    $U(t,x;\xi)$ satisfies the following properties:
    \begin{itemize}
        \item[(i)]\ $U$ is continuous on $ [0,+\infty)\times\bR$;

        \item[(ii)]\ for any $t\ge0$, $U(t,x;\xi)$ has the limits
        \beaa
            \dlim_{x\to-\infty} U(t,x;\xi) = \kappa,\quad \dlim_{x\to+\infty} U(t,x;\xi) = 0;
        \eeaa

        \item[(iii)]\ for any $t\ge0$, $U(t,x;\xi)$ is strictly monotone decreasing with respect to $x$.
    \end{itemize}
\end{lemma}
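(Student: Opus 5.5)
The plan is to treat the two pieces of $U(t,x;\xi)$ separately: $f_{\alpha}(t)u_0(x)$ and
\[
I(t,x):=\int_0^t g_\alpha(t-s)\,\cK(x-\xi(s))\,ds .
\]
Each property will be reduced to facts about $u_0$, $\cK$ and the kernels in Lemma~\ref{lem:ML}. The two recurring ingredients are the integrability of $g_\alpha$ near $0$ and the bound $0\le\cK\le\kappa$.

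\textbf{Continuity (i).} The term $f_\alpha(t)u_0(x)$ is a product of continuous functions, so only $I$ needs work. First, $\cK$ is uniformly continuous and bounded, because $|\cK(a)-\cK(b)|\le \|K\|_\infty|a-b|$. So for fixed $t$, $I(t,\cdot)$ is Lipschitz with constant $\|K\|_\infty\int_0^t g_\alpha \le \|K\|_\infty$, uniformly in $t$. For continuity in $t$, substitute $r=t-s$ to get
\[
I(t,x)=\int_0^t g_\alpha(r)\,\cK(x-\xi(t-r))\,dr .
\]
Take $t'\to t$. The piece of the integral over $[\min(t,t'),\max(t,t')]$ is bounded by $\kappa$ times the integral of $g_\alpha$ over an interval of vanishing length, hence tends to $0$. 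On the common range, the integrand converges pointwise because $\xi$ is continuous, and it is dominated by $\kappa g_\alpha\in L^1$, so dominated convergence applies. Joint continuity then follows from the uniform-in-$t$ Lipschitz bound in $x$.

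\textbf{Limits (ii).} Fix $t\ge0$. Since $\xi$ is bounded on $[0,t]$, we have $x-\xi(s)\to\mp\infty$ uniformly in $s\in[0,t]$ as $x\to\mp\infty$. Hence $\cK(x-\xi(s))\to\kappa$, respectively $0$, and dominated convergence gives
\[
I(t,x)\to\kappa\int_0^t g_\alpha=\kappa\bigl(1-f_\alpha(t)\bigr)
\]
as $x\to-\infty$, and $I(t,x)\to 0$ as $x\to+\infty$, using \eqref{eq:g2f}. By \eqref{condi:init}, $f_\alpha(t)u_0(x)\to f_\alpha(t)\kappa$ as $x\to-\infty$ and $\to 0$ as $x\to+\infty$. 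Adding the two pieces gives the limits $\kappa$ and $0$. At $t=0$ the claim is immediate from $f_\alpha(0)=1$.

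\textbf{Strict monotonicity (iii).} Since $\cK$ is non-increasing, $I(t,\cdot)$ is non-increasing. Since $f_\alpha(t)>0$ by Lemma~\ref{lem:ML}(i) and $u_0'<0$, the term $f_\alpha(t)u_0(x)$ is strictly decreasing in $x$. The sum is therefore strictly decreasing for every $t\ge0$.

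The only point needing care is continuity in $t$ near the integrable singularity of $g_\alpha$ at $r=0$. The interval-shrinking estimate above handles it, so I expect no genuine obstacle in this lemma.
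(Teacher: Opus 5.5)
Your proof is correct and follows the paper's argument. You use the same split into $f_\alpha(t)u_0(x)$ plus the Volterra term, get the limits from $\int_0^t g_\alpha = 1-f_\alpha(t)$, and get strict decrease from $f_\alpha(t)>0$, $u_0'<0$ and the monotonicity of $\cK$. The only differences are minor: you spell out the continuity argument that the paper leaves to ``standard'' Volterra-convolution facts. You also argue (iii) by direct comparison rather than via the derivative formula \eqref{deri}; that is enough for this lemma, though the paper later reuses \eqref{deri} to get the quantitative bound $-\partial_x U\ge m_{R,T}$.
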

\begin{proof}
    (i) Since $\cK$ is bounded and continuous and $\xi$ is continuous, the map 
    \beaa
       (s,x)\longmapsto\cK\bigl(x-\xi(s)\bigr)
    \eeaa   
    is continuous and bounded. 
    Moreover, $g_\alpha\in L^1(0,T)$ for every $T>0$.
    Standard continuity properties of Volterra convolutions therefore imply that $U(\cdot,\cdot;\xi)\in C([0,\infty)\times\bR)$.
    
    (ii) Using \eqref{condi:init}, the limits of $\cK$, and \eqref{eq:g2f}, we obtain
    \beaa
        \dlim_{x\to - \infty} U(t,x,\xi) &=& f_{\alpha}(t) \dlim_{x\to - \infty}  u_0(x) + \int^{t}_{0} g_{\alpha}(t-s) \left(  \dlim_{x\to - \infty} \cK(x-\xi(s)) \right) ds \\
        &=& \kappa f_{\alpha}(t) + \kappa \int^{t}_{0} g_{\alpha}(t-s) \, ds  =\kappa.
    \eeaa
    Similarly, we obtain the limit as $x\to+\infty$.

    (iii) Since $\cK'=-K$, differentiation with respect to $x$ gives
    \be\label{deri}
        \dfrac{\partial U}{\partial x} (t,x;\xi) =  f_{\alpha}(t) u'_0(x) - \int^{t}_{0}g_{\alpha}(t-s) K(x-\xi(s)) ds <0.
    \ee

    Thus, the desired assertions are shown.
\end{proof}

\medskip
The previous lemma reduces the construction of a front solution to the construction of its threshold location.
\begin{proposition}\label{prop:fixed}
    Suppose that there exists 
    $x^*\in C[0,\infty)$ satisfying $x^*(0)=0$ and
    \be\label{fixed}
        U\bigl(t,x^*(t);x^*\bigr)=u_T \qquad (t\geq0).
    \ee
    Then $u(t,x):=U(t,x;x^*)$ is a front solution of Cauchy problem~\eqref{eq:main}--\eqref{init:main}, and $x^*(t)$ is its threshold location. 
\end{proposition}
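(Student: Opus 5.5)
The plan is to check, one item at a time, the requirements in the definition of a front solution for $u(t,x):=U(t,x;x^*)$. The tools are Lemma~\ref{lem:U}, the identity \eqref{fixed}, and the converse part of Lemma~\ref{lem:rep}.

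\textbf{Continuity and the front structure.} First, Lemma~\ref{lem:U}(i), applied with $\xi=x^*$ (which is admissible since $x^*\in C[0,\infty)$ and $x^*(0)=0$), gives $u\in C([0,\infty)\times\bR)$. Next, fix $t\ge0$. By Lemma~\ref{lem:U}(iii), $x\mapsto u(t,x)$ is strictly decreasing, and by \eqref{fixed} it equals $u_T$ at $x=x^*(t)$. Hence $u(t,x)>u_T$ for $x<x^*(t)$ and $u(t,x)<u_T$ for $x>x^*(t)$, which is \eqref{sol:front}, and $x^*$ is continuous by assumption. At $t=0$ we have $f_\alpha(0)=1$ and the integral term vanishes, so $u(0,\cdot)=u_0$. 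This is consistent with $x^*(0)=0$ and $u_0(0)=u_T$.

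\textbf{The mild-solution identity.} The remaining task is to show that $u$ satisfies the mild formulation, i.e.\ that the nonlinear term can be identified. By the front structure, for every $s\ge0$ we have $H(u(s,y)-u_T)=1$ for $y<x^*(s)$ and $=0$ for $y>x^*(s)$. The single point $y=x^*(s)$ has measure zero, so the convention for $H(0)$ is irrelevant. Therefore
\[
\int_{\bR}K(x-y)H(u(s,y)-u_T)\,dy=\int_{-\infty}^{x^*(s)}K(x-y)\,dy=\int_{x-x^*(s)}^{+\infty}K(w)\,dw=\cK\bigl(x-x^*(s)\bigr).
\]
Substituting this into the definition of $U$ shows that $u$ satisfies the mild formula stated after Lemma~\ref{lem:rep}, for all $t\ge0$ and $x\in\bR$. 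For each fixed $x$, the function $h(s)=\cK(x-x^*(s))$ is continuous. So, pointwise in $x$, the converse part of Lemma~\ref{lem:rep} identifies $u(\cdot,x)$ as the mild solution of \eqref{pro:v} with $v_0=u_0(x)$.

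\textbf{Limits and the main obstacle.} The limits $u(t,\pm\infty)$ follow from Lemma~\ref{lem:U}(ii), although they are not needed for the definition itself. The only delicate point is the identification of the Heaviside nonlinearity with $\cK(x-x^*(s))$. It requires the strict sign structure for every $s$, not merely at the final time $t$. That structure is exactly what the strict monotonicity of Lemma~\ref{lem:U}(iii), combined with \eqref{fixed}, supplies, so this step goes through once it is applied for each $s\in[0,t]$.
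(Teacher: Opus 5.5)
Your proposal is correct and follows the paper's own argument. The paper likewise gets \eqref{sol:front} from the strict monotonicity in Lemma~\ref{lem:U} together with \eqref{fixed}, and then uses the converse part of Lemma~\ref{lem:rep} for the mild-solution property; you additionally spell out the continuity of $u$ and the identification $\int_{\bR}K(x-y)H(u(s,y)-u_T)\,dy=\cK(x-x^*(s))$, both of which the paper leaves implicit.
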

\begin{proof}
    By Lemma~\ref{lem:U}, the function $U(t,x;x^*)$ is strictly decreasing with respect to $x$.
    Hence, Equation~\eqref{fixed} implies that $u(t,x)= U(t,x;x^*)$ satisfies \eqref{sol:front}.

    Furthermore, $u(t,x)$ is a solution to the integral form of Equation~\eqref{eq:main} that satisfies \eqref{init:main}.
    Therefore, by Lemma~\ref{lem:rep}, we see that it is a mild solution to Equation~\eqref{eq:main} that satisfies \eqref{init:main}.
\end{proof}

\medskip
We prove the global existence and uniqueness of a solution to Equation~\eqref{fixed} in the next subsection.

\medskip
\subsection{Existence and uniqueness of the threshold location}

In this subsection, we prove that Equation~\eqref{fixed} admits a unique global solution. 
Fix $T>0$ and set 
\beaa
    \cX(T) := \left\{ \xi\in C([0,T])\,;\ \xi(0)=0 \right\}.
\eeaa
Note that for $\xi\in\cX(T)$, the extension
\beaa
    \xi(t) = \xi(T)\quad (t\ge T)
\eeaa
yields $\xi\in C[0,+\infty)$.
Thus, we can apply Lemma~\ref{lem:U} under this extension.

We first define an operator on $\cX(T)$ by assigning to each prescribed interface $\xi$ the corresponding threshold location of $U(\,\cdot\,,\,\cdot\,;\xi)$.
\begin{lemma}\label{lem:def-phi}
    For each $\xi\in \cX(T)$, there exists a unique $\eta\in \cX(T)$ such that 
    \be\label{eq:eta}
        U(t,\eta(t);\xi)=u_T \qquad (0\leq t\leq T).
    \ee
\end{lemma}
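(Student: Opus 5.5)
Fix $\xi\in\cX(T)$, extended by $\xi(t)=\xi(T)$ for $t\ge T$. Then $U(\cdot,\cdot;\xi)$ is defined on $[0,+\infty)\times\bR$. The plan is to obtain $\eta(t)$ pointwise in $t$ from the intermediate value theorem and strict monotonicity, and then to prove continuity of $t\mapsto\eta(t)$ separately.

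\emph{Existence and uniqueness for each fixed $t$.} Fix $t\in[0,T]$. By Lemma~\ref{lem:U}(i), the map $x\mapsto U(t,x;\xi)$ is continuous. By Lemma~\ref{lem:U}(ii), it tends to $\kappa$ as $x\to-\infty$ and to $0$ as $x\to+\infty$. By \eqref{condi:bistable} we have $u_T\in(0,\kappa)$, so the intermediate value theorem gives some $x$ with $U(t,x;\xi)=u_T$. Lemma~\ref{lem:U}(iii) says this map is strictly decreasing, so the root is unique; call it $\eta(t)$. At $t=0$ we have $f_\alpha(0)=1$ and the integral term vanishes, so $U(0,x;\xi)=u_0(x)$. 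Hence $\eta(0)$ is the unique zero of $u_0-u_T$, which is $0$ by \eqref{condi:init}.

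\emph{Continuity of $\eta$.} This is the only step that needs any care. I would use a compactness-free argument based on the joint continuity of $U$ and strict monotonicity in $x$. Fix $t_0\in[0,T]$ and $\varepsilon>0$. Strict monotonicity gives
\[
U(t_0,\eta(t_0)-\varepsilon;\xi)>u_T>U(t_0,\eta(t_0)+\varepsilon;\xi).
\]
By Lemma~\ref{lem:U}(i), the functions $t\mapsto U(t,\eta(t_0)\pm\varepsilon;\xi)$ are continuous. So there is $\delta>0$ such that for $|t-t_0|<\delta$, $t\in[0,T]$,
\[
U(t,\eta(t_0)-\varepsilon;\xi)>u_T>U(t,\eta(t_0)+\varepsilon;\xi).
\]
Since $U(t,\cdot;\xi)$ is strictly decreasing and equals $u_T$ only at $\eta(t)$, this forces $\eta(t)\in(\eta(t_0)-\varepsilon,\eta(t_0)+\varepsilon)$. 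Therefore $\eta$ is continuous, and together with $\eta(0)=0$ this gives $\eta\in\cX(T)$. Uniqueness of $\eta$ as an element of $\cX(T)$ follows from the pointwise uniqueness at each $t$.

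The only subtlety I would check is Lemma~\ref{lem:U}(i), the joint continuity of $U$, in particular continuity at $t=0$. It follows from $g_\alpha\in L^1_{\rm loc}$ and the boundedness of $\cK$, and it is already stated as part of Lemma~\ref{lem:U}(i), so it may be assumed here. No further estimate is needed, because the strict inequality in \eqref{deri} already gives strict monotonicity at every $t$, including $t=0$, where $u_0'<0$.
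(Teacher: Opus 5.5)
Your proposal is correct and follows the paper's proof essentially step for step. You obtain $\eta(t)$ pointwise from the limits and strict monotonicity in Lemma~\ref{lem:U}, get $\eta(0)=0$ from $U(0,\cdot;\xi)=u_0$, and prove continuity by propagating the bracketing $U(t_0,\eta(t_0)-\varepsilon;\xi)>u_T>U(t_0,\eta(t_0)+\varepsilon;\xi)$ to nearby times via joint continuity; the only cosmetic difference is that you use a $\delta$ where the paper uses a sequence $t_n\to t_0$.
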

\begin{proof} 
    Fix $\xi\in \cX(T)$.
    For each $t\in[0,T]$, 
    Lemma \ref{lem:U} shows that 
    \beaa
        \lim_{x\to-\infty}U(t,x;\xi)=\kappa>u_T, \qquad \lim_{x\to+\infty}U(t,x;\xi)=0<u_T,
    \eeaa
    and that $U(t,x;\xi)$ is strictly decreasing with respect to $x$.
    Hence there exists a unique $\eta(t)\in\bR$ satisfying \eqref{eq:eta}.
    Since $U(0,x;\xi)=u_0(x)$ and $u_0(0)=u_T$, we have $\eta(0)=0$.
    
    It remains to prove the continuity of $\eta$. 
    Let $t_n\to t_0$ in $[0,T]$.
    For any $\eps>0$, we have 
    \beaa
        U(t_0,\eta(t_0)-\eps;\xi)>u_T, \qquad U(t_0,\eta(t_0)+\eps;\xi)<u_T. 
    \eeaa
    The continuity of $U$ implies that, for all sufficiently large $n$, 
    \beaa
        U(t_n,\eta(t_0)-\eps;\xi)>u_T, \qquad U(t_n,\eta(t_0)+\eps;\xi)<u_T. 
    \eeaa
    It follows that $|\eta(t_n) - \eta(t_0)|<\eps$.
    Since $\eps>0$ is arbitrary,
    \beaa
        \eta(t_n)\longrightarrow \eta(t_0).
    \eeaa
    Thus, we obtain $\eta\in \cX(T)$.
\end{proof}

\medskip
For $\xi\in \cX(T)$, let $\Phi(\xi)$ denote the unique function obtained in Lemma~\ref{lem:def-phi}, that is, 
\beaa 
U\bigl(t,\Phi(\xi)(t);\xi\bigr)=u_T\qquad (0\leq t\leq T). 
\eeaa
Then $\Phi:\cX(T)\to \cX(T)$ is well-defined, and a fixed point of $\Phi$ is a solution of Equation~\eqref{fixed} on $[0,T]$.

To construct a fixed point, we introduce a closed and bounded subset of $\cX(T)$.
Define 
\beaa
    \ell_\alpha(t):=\max\{t,t^\alpha\}, \qquad L_R(t):=R\ell_\alpha(t), 
\eeaa
and, for $R>0$, set 
\beaa
    \cX_R(T) := \left\{ \xi\in \cX(T)\,;\ |\xi(t)|\leq L_R(t) \text{ for all }t\in[0,T] \right\}.
\eeaa

We first establish two estimates that will be used away from $t=0$.
\begin{lemma}\label{lem:barrier_integrals}  
    Let $T>0$ and $\delta\in(0,T)$.
    Then 
    \be\label{lim:right}
    \lim_{R\to+\infty} \sup_{\delta\le t\le T} \int_0^t
    g_\alpha(t-s)\cK(L_R(t)-L_R(s))\,ds =0 
    \ee
    and 
    \be\label{lim:left}
    \lim_{R\to+\infty}\sup_{\delta\le t\le T}
    \int_0^t g_\alpha(t-s)  |\cK(-L_R(t)+L_R(s))-\kappa|\,ds = 0.
    \ee
\end{lemma}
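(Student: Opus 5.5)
The plan is to prove \eqref{lim:right} directly and deduce \eqref{lim:left} from it by symmetry. Evenness of $K$ gives $\cK(-x)=\kappa-\cK(x)$, so
\beaa
|\cK(-L_R(t)+L_R(s))-\kappa| = \cK(L_R(t)-L_R(s)),
\eeaa
and the two integrands coincide. It therefore suffices to show that
\beaa
I_R(t):=\int_0^t g_\alpha(t-s)\cK\bigl(R(\ell_\alpha(t)-\ell_\alpha(s))\bigr)\,ds \to 0
\eeaa
uniformly for $t\in[\delta,T]$.

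The key observation is that $\ell_\alpha$ is strictly increasing, and that on the compact set $[\delta,T]$ it admits a uniform lower bound for its increments. Since $t\mapsto t$ and $t\mapsto t^\alpha$ have derivatives $1$ and $\alpha t^{\alpha-1}\ge \alpha T^{\alpha-1}$ on $(0,T]$, and $\ell_\alpha$ is the maximum of the two, we get for $0\le s\le t\le T$
\beaa
\ell_\alpha(t)-\ell_\alpha(s)\ \ge\ m\,(t-s),\qquad m:=\min\{1,\alpha T^{\alpha-1}\}>0.
\eeaa
I would verify this by cases according to the position of $s,t$ relative to $1$. Each of $t$ and $t^\alpha$ individually satisfies the bound on $[0,T]$, because the concave function $t^\alpha$ satisfies $t^\alpha-s^\alpha\ge \alpha T^{\alpha-1}(t-s)$. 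The maximum of two increasing functions, each with increments at least $m(t-s)$, inherits the same bound: we have $\max(a_t,b_t)-\max(a_s,b_s)\ge$ the increment of whichever of $a,b$ attains the max at $s$.

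Since $\cK$ is non-increasing, it follows that $I_R(t)\le \int_0^t g_\alpha(r)\,\cK(Rmr)\,dr$ after substituting $r=t-s$. Fix $\eta>0$ and split this integral at $r=\eta$. On $(0,\eta)$ we use $\cK\le\kappa$ and \eqref{ieq:galpha}, which bounds that part by $\kappa\eta^\alpha/\Gamma(\alpha+1)$, independently of $t$ and $R$. On $[\eta,t]$ we use $\cK(Rmr)\le\cK(Rm\eta)$ and $\int g_\alpha\le1$, so that part is at most $\cK(Rm\eta)$, which tends to $0$ as $R\to\infty$ because $\cK(+\infty)=0$. Taking $\limsup_R$ and then $\eta\to0$ gives the uniform limit.

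The only real obstacle is the increment lower bound for $\ell_\alpha$, which must hold uniformly near $s=0$ and across the switch point $t=1$; the concavity argument above handles this. Note that $\delta$ is not actually needed for this argument, so the estimate holds on all of $[0,T]$.
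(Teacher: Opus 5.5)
Your proof is correct and follows essentially the same route as the paper: split off a neighbourhood of the diagonal $s=t$, where $\int g_\alpha$ is small, and on the remaining part use the monotonicity of $\cK$ together with $\cK(+\infty)=0$. The differences are minor refinements: you replace the paper's compactness-derived constant $c_{\delta,\eps}$ by the explicit bound $\ell_\alpha(t)-\ell_\alpha(s)\ge \min\{1,\alpha T^{\alpha-1}\}(t-s)$, which makes the estimate uniform on all of $[0,T]$, and you reduce \eqref{lim:left} exactly to \eqref{lim:right} via $\cK(-x)=\kappa-\cK(x)$, where the paper simply calls that case analogous.
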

\begin{proof}
    We only prove \eqref{lim:right}, since the proof of \eqref{lim:left} is analogous.

    Fix $\eps\in(0,\delta)$.
    Since $\ell_\alpha$ is continuous and strictly increasing,
    there exists $c_{\delta,\varepsilon}>0$ such that 
    \beaa
        \ell_\alpha(t)-\ell_\alpha(s) \geq c_{\delta,\eps} 
    \eeaa
    whenever $\delta\leq t\leq T$ and $0\leq s\leq t-\eps$. 
    Splitting the integral at $t-\varepsilon$,
    we obtain 
    \beaa
    && \int_0^t g_\alpha(t-s)\cK(L_R(t)-L_R(s))\,ds \\
    && = \int_0^{t-\eps} g_\alpha(t-s)\cK(L_R(t)-L_R(s))\,ds + \int_{t-\eps}^{t} g_\alpha(t-s)\cK(L_R(t)-L_R(s))\,ds \\
    && \leq \cK(Rc_{\delta,\eps}) + \kappa\int^{\eps}_{0} g_\alpha(r)\,dr.
    \eeaa
    First letting $R\to+\infty$, and then $\eps\to+ 0$, proves \eqref{lim:right}. 
\end{proof}

\medskip
We next prove that $\cX_R(T)$ is invariant under $\Phi$ when $R$ is sufficiently large.
\begin{lemma}\label{lem:Phi_invariant} 
    For every $T>0$, there exists $R>0$ such that 
    \beaa
        \Phi\bigl(\cX_R(T)\bigr)\subset \cX_R(T). 
    \eeaa
\end{lemma}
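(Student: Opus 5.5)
Since $U(t,\cdot\,;\xi)$ is strictly decreasing (Lemma~\ref{lem:U}(iii)), the bound $|\Phi(\xi)(t)|\le L_R(t)$ is equivalent to the pair of inequalities
\[
U\bigl(t,L_R(t);\xi\bigr)\le u_T\le U\bigl(t,-L_R(t);\xi\bigr).
\]
We will show both for every $\xi\in\cX_R(T)$ and $t\in[0,T]$, with $R$ large and independent of $\xi$. At $t=0$ both are trivial. The argument uses the monotonicity of $\cK$: for $\xi\in\cX_R(T)$ we have $\xi(s)\le L_R(s)$, so
\[
\cK\bigl(L_R(t)-\xi(s)\bigr)\le \cK\bigl(L_R(t)-L_R(s)\bigr).
\]
Together with $u_0(L_R(t))\le u_0(0)=u_T$, this gives
\[
U\bigl(t,L_R(t);\xi\bigr)\le f_\alpha(t)\,u_0\bigl(L_R(t)\bigr)+\int_0^t g_\alpha(t-s)\,\cK\bigl(L_R(t)-L_R(s)\bigr)\,ds .
\]
The lower bound is symmetric, using $\kappa-U$ and the second limit of Lemma~\ref{lem:barrier_integrals}.

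\textbf{Small times $t\in(0,\delta]$.} Here $L_R(t)=Rt^\alpha$ once $\delta\le1$. We use $u_0(x)\le u_T+u_0'(0)x/2$ for $0\le x\le x_0$ (this holds since $u_0\in C^1$ and $u_0'(0)<0$), together with $u_0(x)\le u_0(x_0)<u_T$ for $x\ge x_0$. We also use $f_\alpha(t)\ge 1-Ct^\alpha$ and the crude bound $\int_0^t g_\alpha\le t^\alpha/\Gamma(1+\alpha)$ from \eqref{ieq:galpha}. The right-hand side is then at most
\[
\bigl(1-Ct^\alpha\bigr)\Bigl(u_T-\min\bigl\{\tfrac{|u_0'(0)|}{2}Rt^\alpha,\ \theta\bigr\}\Bigr)+\frac{\kappa\, t^\alpha}{\Gamma(1+\alpha)},
\]
where $\theta:=u_T-u_0(x_0)>0$. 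In the linear regime the negative term $\frac{|u_0'(0)|}{2}Rt^\alpha$ dominates $\bigl(Cu_T+\kappa/\Gamma(1+\alpha)\bigr)t^\alpha$ once $R$ is large. In the saturated regime, $\theta$ dominates $O(t^\alpha)$ once $\delta$ is small. Hence we first fix $\delta$ small, depending only on $u_0,\alpha,\kappa$, and then require $R\ge R_1$, with $R_1$ chosen independently of $\delta$. The lower bound is handled the same way, using $\kappa-u_0(-x)\ge \kappa-u_T>0$ and the same linearization on the left.

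\textbf{Times $t\in[\delta,T]$.} Here we use $u_0(L_R(t))\le u_0(R\delta^\alpha)\to0$ as $R\to\infty$ (for $\delta\le 1$, say) and $f_\alpha\le1$. Combined with \eqref{lim:right}, this makes the right-hand side smaller than $u_T$ for $R$ large. For the lower bound, $\kappa-U(t,-L_R(t);\xi)$ is bounded by $f_\alpha(t)\bigl(\kappa-u_0(-R\delta^\alpha)\bigr)+\bigl(\kappa-\kappa f_\alpha(t)-\int_0^t g_\alpha(t-s)\cK(-L_R(t)+L_R(s))\,ds\bigr)$. By \eqref{eq:g2f} and \eqref{lim:left}, this is $o(1)$ uniformly, hence less than $\kappa-u_T$. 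Taking $R$ to be the maximum of the thresholds from both regimes gives the invariance.

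\textbf{Main obstacle.} The delicate step is the small-time regime. The $t^\alpha$ size of the gain $\int_0^t g_\alpha\,\cK$ must be beaten by the initial slope times $Rt^\alpha$, and we must check that the order of choosing $\delta$ before $R$ is consistent. This is why $\ell_\alpha$ behaves like $t^\alpha$ near $0$, matching \eqref{xs:short}.
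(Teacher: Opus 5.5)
Your proposal is correct and follows essentially the paper's proof: you linearize $u_0$ at $0$ and split small times into a linear and a saturated regime, where $Rt^\alpha$ beats the $O(t^\alpha)$ gain with $\delta$ fixed before $R$, and on $[\delta,T]$ you use the barrier comparison $|\xi|\le L_R$ together with Lemma~\ref{lem:barrier_integrals}. One slip to fix: your small-time display uses $f_\alpha\ge 1-Ct^\alpha$ in the wrong direction for an upper bound (similarly, on the left one has $0<\kappa-u_0(-x)\le\kappa-u_T$, not $\ge$), but using $f_\alpha\le 1$ and $u_0>0$ instead gives $U(t,L_R(t);\xi)-u_T\le-\min\{\tfrac{|u_0'(0)|}{2}Rt^\alpha,\theta\}+\kappa t^\alpha/\Gamma(1+\alpha)$, and the rest of your argument goes through unchanged.
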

\begin{proof} 
    Fix \(T>0\). Since $\Phi(\xi)(0)=0$, it is enough to find $R>0$ such that 
    \be\label{ieq:lemma}
        U(t,-L_R(t);\xi)>u_T>U(t,L_R(t);\xi)
    \ee
    for every $t\in(0,T]$ and every $\xi\in \cX_R(T)$. 
    
    Set $C_{K} := \max\{u_T,\kappa-u_T\}$.
    Since $u_0'(0)<0$, there exist $a>0$ and $\rho>0$ such that 
    \beaa 
        u_0(y)\leq u_T-ay, \qquad u_0(-y)\geq u_T+ay \qquad (0\leq y\leq\rho). 
    \eeaa
    Moreover, there exist $\delta_0\in(0,\min\{1,T\})$ and $C_0>0$ such that 
    \beaa 
        f_\alpha(t)\geq\frac{1}{2}, \qquad 1-f_\alpha(t)\leq C_0t^\alpha \qquad (0<t\leq\delta_0),
    \eeaa
    since $f_{\alpha}(t)$ satisfies $1-f_\alpha(t)=O(t^\alpha)$ as $t\to +0$. 

    Let $0<t\leq\delta_0$, and suppose first that $L_R(t)\leq\rho$.
    Using \eqref{eq:g2f},
    we obtain 
    \beaa
        U(t,L_R(t);\xi)-u_T
        &=& f_\alpha(t) \{u_0(L_R(t))-u_T\} \\
        &&\quad + \int^{t}_{0} g_\alpha(t-s) \{\cK(L_R(t)-\xi(s))-u_T\}\,ds\\
        &\le& -aL_R(t) f_\alpha(t) + C_K\{1-f_\alpha(t)\}\\
        &\le& -\frac{a}{2}L_R(t)+C_KC_0 t^\alpha.
    \eeaa
    Since $t\leq\delta_0<1$, we have $L_R(t)=Rt^\alpha$.
    Therefore, 
    \beaa
        U(t,L_R(t);\xi)-u_T \leq \left( -\frac{aR}{2} + C_{K}C_0\right)t^\alpha. 
    \eeaa
    Similarly, we obtain
    \beaa
        U(t,-L_R(t);\xi)-u_T \geq \left( \frac{aR}{2} - C_{K}C_0 \right)t^\alpha. 
    \eeaa
    Consequently, both inequalities in \eqref{ieq:lemma} hold in this case whenever 
    \be\label{eq:R_small_time}
        R>\frac{2C_{K}C_0}{a}. 
    \ee 
    
    We next consider the case $0<t\leq\delta_0$ and $L_R(t)>\rho$.
    Set 
    \beaa
        m_\rho := \min\{ u_T-u_0(\rho), u_0(-\rho)-u_T \}>0.
    \eeaa
    Then 
    \beaa
      u_0(L_R(t))\leq u_T-m_\rho, \qquad u_0(-L_R(t))\geq u_T+m_\rho. 
    \eeaa
    It follows that 
    \beaa
        U(t,L_R(t);\xi)-u_T&\le& -m_\rho f_\alpha(t) +C_K (1-f_\alpha(t)), \\
        U(t,-L_R(t);\xi)-u_T&\ge& m_\rho f_\alpha(t) -C_K(1-f_\alpha(t))
    \eeaa
    Thus there exists $\delta_1\in(0,\delta_0]$, independent of $R$ and $\xi$, such that \eqref{ieq:lemma} holds for $0<t\leq\delta_1$, provided that \eqref{eq:R_small_time} is satisfied. 

    It remains to consider $t\in[\delta_1,T]$. 
    For $\xi\in \cX_R(T)$, the monotonicity of $\cK$ gives 
    \beaa
        U(t,L_R(t);\xi)
        &\le&
        f_\alpha(t)u_0(L_R(t))
        +\int^t_0 g_\alpha(t-s)\cK(L_R(t)-L_R(s))\,ds
    \eeaa
    and
    \beaa
        U(t,-L_R(t);\xi)
        &\ge& f_\alpha(t)u_0(-L_R(t))
        +\int^t_0 g_\alpha(t-s) \cK(-L_R(t)+L_R(s))\,ds.
    \eeaa
    As $R\to+\infty$, 
    \beaa
        u_0(L_R(t))\longrightarrow0, \qquad u_0(-L_R(t))\longrightarrow\kappa 
    \eeaa
    uniformly for $t\in[\delta_1,T]$.
    Lemma \ref{lem:barrier_integrals} therefore yields 
    \beaa
        \sup_{\delta_1\leq t\leq T} \sup_{\xi\in \cX_R(T)} U(t,L_R(t);\xi) \longrightarrow0,\qquad
        \inf_{\delta_1\leq t\leq T} \inf_{\xi\in \cX_R(T)} U(t,-L_R(t);\xi) \longrightarrow\kappa
    \eeaa
    as $R\to+\infty$.
    By \eqref{condi:bistable}, Inequalities \eqref{ieq:lemma} hold on $[\delta_1,T]$ for all sufficiently large $R$.
    Combining \eqref{eq:R_small_time} and the estimates on $[\delta_1,T]$, we obtain \eqref{ieq:lemma} for every $t\in(0,T]$.

    Since $U(t,x;\xi)$ is strictly decreasing with respect to $x$, the unique solution of $U(t,x;\xi)=u_T$ lies in $(-L_R(t),L_R(t))$.
    Hence $\Phi(\xi)\in \cX_R(T)$, which proves the assertion. 
\end{proof} 

\medskip
We now establish the continuity and compactness of $\Phi$.

\begin{lemma}\label{lem:Phi_compact} 
    Let $R>0$ be chosen as in Lemma~\ref{lem:Phi_invariant}.
    Then 
    \beaa
        \Phi:\cX_R(T)\longrightarrow \cX_R(T) 
    \eeaa
    is continuous and compact. 
\end{lemma}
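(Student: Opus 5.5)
Continuity and compactness will both be proved in the sup norm on $C([0,T])$, using the strict monotonicity of $U$ in $x$ as in the continuity argument of Lemma~\ref{lem:def-phi}. For compactness, $\Phi(\cX_R(T))\subset\cX_R(T)$ is uniformly bounded by $L_R(T)$, so by Arzel\`a--Ascoli it suffices to prove equicontinuity of $\{\Phi(\xi)\}_{\xi\in\cX_R(T)}$.

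\textbf{Continuity.} Let $\xi_n\to\xi$ uniformly on $[0,T]$ and set $\eta_n=\Phi(\xi_n)$, $\eta=\Phi(\xi)$. Since $\cK$ is uniformly continuous (it is Lipschitz with constant $\|K\|_\infty$),
\[
|U(t,x;\xi_n)-U(t,x;\xi)|\le \|K\|_\infty\|\xi_n-\xi\|_\infty\int_0^t g_\alpha(r)\,dr\le \|K\|_\infty\|\xi_n-\xi\|_\infty
\]
uniformly in $(t,x)$. Fix $\eps>0$ and define
\[
m(\eps):=\inf_{t\in[0,T]}\min\bigl\{U(t,\eta(t)-\eps;\xi)-u_T,\ u_T-U(t,\eta(t)+\eps;\xi)\bigr\}.
\]
Each term is positive by strict monotonicity and continuous in $t$ by Lemma~\ref{lem:U}(i) and the continuity of $\eta$, so $m(\eps)>0$ by compactness of $[0,T]$. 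Once $\|K\|_\infty\|\xi_n-\xi\|_\infty<m(\eps)$, we get $U(t,\eta(t)\mp\eps;\xi_n)\gtrless u_T$ for all $t$, hence $\|\eta_n-\eta\|_\infty<\eps$.

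\textbf{Equicontinuity.} This is the main step, since the bound must be uniform in $\xi$. For $(t,x)$ in the compact set $Q:=[0,T]\times[-L_R(T),L_R(T)]$, I would show that $U(\cdot,\cdot;\xi)$ is equicontinuous on $Q$ uniformly over $\xi\in\cX_R(T)$.
\begin{itemize}
\item In $x$: $|\partial_xU|\le \|u_0'\|_{L^\infty(-L_R(T),L_R(T))}+\|K\|_\infty$.
\item In $t$: for $t<t'$, split
\[
\int_0^{t'}g_\alpha(t'-s)\cK(x-\xi(s))\,ds-\int_0^{t}g_\alpha(t-s)\cK(x-\xi(s))\,ds
\]
into the piece over $[t,t']$, bounded by $\kappa\int_0^{t'-t}g_\alpha$, and the piece over $[0,t]$, bounded by $\kappa\int_0^t|g_\alpha(t'-s)-g_\alpha(t-s)|\,ds$. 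The latter tends to $0$ uniformly by $L^1$-continuity of translations of $g_\alpha$ on bounded intervals; alternatively, since $g_\alpha$ is decreasing, it equals $\kappa\bigl(\int_0^t g_\alpha-\int_{t'-t}^{t'}g_\alpha\bigr)$ and is controlled via \eqref{eq:g2f} and continuity of $f_\alpha$.
\item The term $f_\alpha(t)u_0(x)$ is uniformly continuous on $Q$, with no dependence on $\xi$.
\end{itemize}
This yields a common modulus of continuity $\omega$ for $U(\cdot,\cdot;\xi)$ on $Q$.

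Next I need a uniform lower bound on the slope. From \eqref{deri},
\[
-\partial_xU\ge f_\alpha(T)\min_{|x|\le L_R(T)}|u_0'(x)|=:\mu>0,
\]
using that $f_\alpha$ is positive and decreasing and that the integral term in \eqref{deri} has a favorable sign. Hence, with $\eta=\Phi(\xi)$ and $h=\eta(t')-\eta(t)$,
\[
\mu|h|\le|U(t,\eta(t');\xi)-U(t,\eta(t);\xi)|=|U(t,\eta(t');\xi)-U(t',\eta(t');\xi)|\le\omega(|t'-t|).
\]
Here the middle equality holds because both $U(t,\eta(t);\xi)$ and $U(t',\eta(t');\xi)$ equal $u_T$. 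This gives $|\eta(t')-\eta(t)|\le\omega(|t'-t|)/\mu$ uniformly in $\xi$, so Arzel\`a--Ascoli applies.

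I expect the only delicate point to be the uniform time-modulus of the Volterra term near the weak singularity of $g_\alpha$ at $0$. The monotonicity of $g_\alpha$, which follows from complete monotonicity of $f_\alpha$ via $g_\alpha=-f_\alpha'$, handles it cleanly.
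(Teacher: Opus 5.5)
Your proof is correct, and the compactness part is essentially the paper's argument. The paper uses the same uniform slope bound $-\partial_xU\ge f_\alpha(T)\min_{|x|\le L_R(T)}(-u_0')$. It uses the same $\xi$-uniform time modulus $(\|u_0\|_{L^\infty}+2\kappa)\{1-f_\alpha(h)\}$, obtained from the monotonicity of $g_\alpha$ and \eqref{eq:g2f}. It then makes the same swap using $U(t,\eta(t);\xi)=U(t',\eta(t');\xi)=u_T$ and concludes with Arzel\`a--Ascoli. The $x$-equicontinuity of $U$ that you also establish is never used, because your final inequality only needs the time modulus at the single point $x=\eta(t')$.

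The genuine difference is the continuity step. You argue qualitatively: strict monotonicity, joint continuity of $U$ and compactness of $[0,T]$ give a positive margin $m(\eps)$, and a perturbation of size less than $m(\eps)$ cannot move the threshold by $\eps$. The paper instead reuses the slope bound $\mu$, which you already need for equicontinuity. Combined with $|U(t,x;\xi_1)-U(t,x;\xi_2)|\le\|K\|_{L^\infty}\|\xi_1-\xi_2\|_{C[0,T]}$, this gives the stronger conclusion that $\Phi$ is Lipschitz:
\[
\|\Phi(\xi_1)-\Phi(\xi_2)\|_{C[0,T]}\le \frac{\|K\|_{L^\infty}}{\mu}\,\|\xi_1-\xi_2\|_{C[0,T]}.
\]
Your version needs less, namely no quantitative lower bound on $-\partial_xU$. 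The paper's version is shorter and yields an explicit modulus. It is also the same estimate that reappears in the uniqueness proof of Proposition~\ref{prop:exi-uni}.
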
 
\begin{proof}
    Set $M:=L_R(T)$ and define 
    \beaa   
        m_{R,T} := f_\alpha(T) \min_{|x|\leq M}\{-u_0'(x)\}>0. 
    \eeaa
    Since $f_\alpha$ is non-increasing, it follows from \eqref{deri} that 
    \beaa 
        -\partial_x U(t,x;\xi) \geq m_{R,T} 
    \eeaa
    for every $t\in[0,T]$, $x\in[-M,M]$, and $\xi\in \cX_R(T)$.
    Consequently, 
    \be\label{eq:uniform_monotonicity_U}
        |U(t,x;\xi)-U(t,y;\xi)| \geq m_{R,T}|x-y| 
    \ee
    for all $x,y\in[-M,M]$. 
    
    We first prove the continuity of $\Phi$.
    Since $\cK'=-K$, the function $\cK$ is Lipschitz continuous with Lipschitz constant $\|K\|_{L^\infty(\bR)}$.
    Hence, for any $\xi_1,\xi_2\in \cX_R(T)$,
    \beaa
        |U(t,x;\xi_1)-U(t,x;\xi_2)|
        &\le&
        \int^t_0 g_\alpha(t-s)
        |\mathcal K(x-\xi_1(s))-\mathcal K(x-\xi_2(s))|\,ds \\
        &\le&
        \|K\|_{L^\infty(\bR)}
        \|\xi_1-\xi_2\|_{C[0,T]}
        \int^t_0 g_\alpha(t-s)\,ds \\
        &\le& \|K\|_{L^\infty(\bR)}
        \|\xi_1-\xi_2\|_{C[0,T]}
    \eeaa 
    Since $U(t,\Phi(\xi_j)(t);\xi_j)=u_T$ and $\Phi(\xi_j)(t)\in[-M,M]$, \eqref{eq:uniform_monotonicity_U} gives \beaa
        m_{R,T}|\Phi(\xi_1)(t)-\Phi(\xi_2)(t)| &\leq& \left| U(t,\Phi(\xi_1)(t);\xi_1) - U(t,\Phi(\xi_2)(t);\xi_1) \right| \\ 
        &=& \left| U(t,\Phi(\xi_2)(t);\xi_2) - U(t,\Phi(\xi_2)(t);\xi_1) \right| \\ &\leq& \|K\|_{L^\infty(\bR)} \|\xi_1-\xi_2\|_{C[0,T]}. 
    \eeaa
    Therefore, 
    \beaa
        \|\Phi(\xi_1)-\Phi(\xi_2)\|_{C[0,T]} \leq \frac{\|K\|_{L^\infty(\bR)}}{m_{R,T}} \|\xi_1-\xi_2\|_{C[0,T]}. 
    \eeaa
    Thus $\Phi$ is Lipschitz continuous. 
    
    We next prove compactness. 
    By Lemma~\ref{lem:Phi_invariant}, 
    \beaa
        |\Phi(\xi)(t)|\leq L_R(t)\leq M 
    \eeaa
    for every $\xi\in \cX_R(T)$ and $t\in[0,T]$.
    Hence $\Phi(\cX_R(T))$ is uniformly bounded. 
    It remains to prove equicontinuity. 
    Let $0\leq s\leq t\leq T$ and put $h:=t-s$. 
    Since $0\leq\cK\leq\kappa$, we have 
    \beaa
        |U(t,x;\xi)-U(s,x;\xi)| &\leq& \|u_0\|_{L^\infty(\bR)} |f_\alpha(t)-f_\alpha(s)| \\ &&\quad + \kappa \int^{s}_{0} \bigl| g_\alpha(t-r)-g_\alpha(s-r) \bigr|\,dr + \kappa \int^{t}_{s} g_\alpha(t-r)\,dr.
    \eeaa    
    Using the monotonicity of $g_\alpha$ and \eqref{eq:g2f}, we obtain 
    \beaa
        |f_\alpha(t)-f_\alpha(s)| = \int^{t}_{s} g_\alpha(r)\,dr \leq \int^h_0 g_\alpha(r)\,dr = 1-f_\alpha(h), 
    \eeaa
    and 
    \beaa
        \int^s_0 \bigl| g_\alpha(t-r)-g_\alpha(s-r) \bigr|\,dr = \int^s_0 \{g_\alpha(q)-g_\alpha(q+h)\}\,dq  
        \leq \int^h_0 g_\alpha(q)\,dq = 1-f_\alpha(h).
    \eeaa
    Moreover, 
    \beaa
        \int^t_{s} g_\alpha(t-r)\,dr = \int^h_0 g_\alpha(r)\,dr = 1-f_\alpha(h).
    \eeaa
    It follows that 
    \be\label{eq:uniform_time_modulus_U}
        \sup_{\xi\in \cX_R(T)} \sup_{x\in\bR} |U(t,x;\xi)-U(s,x;\xi)| \leq \omega(|t-s|), 
    \ee
    where $\omega(h) := \bigl( \|u_0\|_{L^\infty(\mathbb R)} + 2\kappa \bigr) \{1-f_\alpha(h)\}$.
    Since $f_\alpha(h)\to1$ as $h\to0$, we have $\omega(h)\to0$.

    Since $U(t,\Phi(\xi)(t);\xi) = U(s,\Phi(\xi)(s);\xi) = u_T$ holds, 
    \eqref{eq:uniform_monotonicity_U} and \eqref{eq:uniform_time_modulus_U} imply
    \beaa
        |\Phi(\xi)(t)-\Phi(\xi)(s)| &\leq& \dfrac{1}{m_{R,T}} |U(t,\Phi(\xi)(t);\xi)-U(t,\Phi(\xi)(s);\xi)| \\ 
        &=& \dfrac{1}{m_{R,T}} |U(s,\Phi(\xi)(s);\xi)-U(t,\Phi(\xi)(s);\xi)| \\ 
        &\leq& \dfrac{1}{m_{R,T}} \omega(|t-s|)
    \eeaa
     uniformly with respect to $\xi\in \cX_R(T)$.
     Thus $\Phi(\cX_R(T))$ is equicontinuous. 

     The Ascoli--Alzel\'{a} theorem now shows that $\Phi(\cX_R(T))$ is relatively compact in $C[0,T]$.
     Hence $\Phi$ is compact.
\end{proof}

\medskip
We are now in a position to prove the existence and uniqueness of the threshold location.
\begin{proposition}\label{prop:exi-uni}
    There exists a unique function $x^*\in C[0,\infty)$ satisfying $x^*(0)=0$ and Equation~\eqref{fixed}.  
\end{proposition}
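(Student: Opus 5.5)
Existence on each finite interval will come from Schauder's fixed point theorem, uniqueness from a Gronwall-type argument for Volterra inequalities, and the global solution by gluing.

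\textbf{Existence on $[0,T]$.} Fix $T>0$ and take $R$ as in Lemma~\ref{lem:Phi_invariant}. The set $\cX_R(T)$ is nonempty, convex and closed in $C[0,T]$: it is cut out by the pointwise constraints $\xi(0)=0$ and $|\xi(t)|\le L_R(t)$, both preserved under convex combinations and uniform limits. It is also bounded. By Lemma~\ref{lem:Phi_compact}, $\Phi$ maps $\cX_R(T)$ into itself continuously and has relatively compact image. Schauder's fixed point theorem therefore gives $x_T\in\cX_R(T)$ with $\Phi(x_T)=x_T$, i.e. $U(t,x_T(t);x_T)=u_T$ on $[0,T]$. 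Here we use that $U(t,\cdot;\xi)$ depends only on $\xi|_{[0,t]}$, so the constant extension beyond $T$ is harmless.

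\textbf{Uniqueness on $[0,T]$.} Let $\xi_1,\xi_2\in C[0,T]$ with $\xi_j(0)=0$ both solve \eqref{fixed} on $[0,T]$. They need not lie in a common $\cX_R(T)$, so I instead set $M:=\max_j\|\xi_j\|_{C[0,T]}$ and use the same lower bound as in Lemma~\ref{lem:Phi_compact},
\[
-\partial_xU(t,x;\xi)\ge m:=f_\alpha(T)\min_{|x|\le M}(-u_0'(x))>0 \qquad (|x|\le M,\ t\in[0,T]).
\]
Comparing $U(t,\xi_1(t);\xi_1)=U(t,\xi_2(t);\xi_2)$ and using the Lipschitz continuity of $\cK$, but keeping the time integral instead of the sup norm, gives
\[
m|\xi_1(t)-\xi_2(t)|\le \|K\|_{L^\infty(\bR)}\int_0^t g_\alpha(t-s)|\xi_1(s)-\xi_2(s)|\,ds .
\]
By \eqref{ieq:galpha}, $g_\alpha(t-s)\le (t-s)^{\alpha-1}/\Gamma(\alpha)$, so $w:=|\xi_1-\xi_2|$ satisfies $w(t)\le C\int_0^t (t-s)^{\alpha-1}w(s)\,ds$. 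The fractional (Henry) Gronwall inequality then yields $w\equiv0$.

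This uniqueness step is where I expect the only real care is needed. The sup-norm Lipschitz bound of Lemma~\ref{lem:Phi_compact} is not contractive, so the weakly singular kernel must be kept and handled by a Gronwall argument. An elementary alternative is to prove $w=0$ on $[0,\tau]$ with $\tau$ chosen so that $C\tau^\alpha/(\alpha\Gamma(\alpha))<1$, and then step forward. Stepping forward works because on $[\tau,2\tau]$ the part of the integral over $[0,\tau]$ already vanishes, and the same estimate applies to the remainder.

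\textbf{Global solution.} For $T<T'$, the restriction of $x_{T'}$ to $[0,T]$ solves \eqref{fixed} on $[0,T]$, again because $U(t,\cdot;\xi)$ depends only on $\xi|_{[0,t]}$. By uniqueness it equals $x_T$. Hence $x^*(t):=x_T(t)$ for any $T\ge t$ is well defined and continuous on $[0,\infty)$, satisfies $x^*(0)=0$ and \eqref{fixed}, and is unique by the uniqueness step applied on every $[0,T]$.
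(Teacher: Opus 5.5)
Your proposal is correct and follows essentially the same route as the paper. You apply Schauder's theorem on $\cX_R(T)$ via Lemmas~\ref{lem:Phi_invariant} and~\ref{lem:Phi_compact}, and you prove uniqueness through the bound $-\partial_x U\ge f_\alpha(T)\min_{|x|\le M}(-u_0')$ and the fractional Gr\"onwall inequality applied to $\int_0^t g_\alpha(t-s)|\xi_1(s)-\xi_2(s)|\,ds$. Your explicit remark that $U(t,\cdot;\xi)$ depends only on $\xi|_{[0,t]}$, which justifies gluing the solutions on $[0,T]$ into a global one, makes precise a step the paper leaves implicit.
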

\begin{proof}
    Fix $T>0$.
    By Lemma~\ref{lem:Phi_invariant}, there exists $R>0$ such that 
    \beaa
        \Phi:\cX_R(T)\longrightarrow \cX_R(T).
    \eeaa
    The set $\cX_R(T)$ is nonempty, closed, bounded, and convex in $C[0,T]$.
    By Lemma~\ref{lem:Phi_compact}, the map $\Phi$ is continuous and compact. 
    Schauder's fixed-point theorem therefore yields a function $x^*\in \cX_R(T)$ such that 
    \beaa
        x^*=\Phi(x^*). 
    \eeaa
    Thus \eqref{fixed} has at least one solution on $[0,T]$.
    
    We next prove uniqueness. 
    Let $x_1^*,x_2^*\in \cX(T)$ be two solutions on $[0,T]$.
    Since both functions are bounded on $[0,T]$,
    there exists $M>0$ such that 
    \beaa
        |x_j^*(t)|\leq M \qquad (0\leq t\leq T, \ j=1,2). 
    \eeaa
    Set 
    \beaa
     c_{M,T}:= f_\alpha(T) \min_{|x|\leq M}\{-u_0'(x)\}>0. 
    \eeaa
    Using $U\bigl(t,x_j^*(t);x_j^*\bigr) = u_T$ for $j=1,2$,
    we write
   \beaa
        U(t,x^*_1(t);x^*_1) - U(t,x^*_2(t);x^*_1)
        = U(t,x^*_2(t);x^*_2) - U(t,x^*_2(t);x^*_1) 
    \eeaa
    The uniform strict monotonicity in $x$ gives 
    \beaa 
        c_{M,T}|x_1^*(t)-x_2^*(t)| \leq{}& \left| U\bigl(t,x_2^*(t);x_2^*\bigr) - U\bigl(t,x_2^*(t);x_1^*\bigr) \right|.
    \eeaa
    Since $\cK$ is Lipschitz continuous, 
    \beaa
        |x_1^*(t)-x_2^*(t)| \leq C_{M,T} \int^t_0 g_\alpha(t-s) |x_1^*(s)-x_2^*(s)|\,ds, 
    \eeaa
    where $C_{M,T} := \|K\|_{L^\infty(\bR)}/c_{M,T}$.
    Using \eqref{ieq:galpha}, we obtain
    \beaa
        |x_1^*(t)-x_2^*(t)| \leq \dfrac{C_{M,T}}{\Gamma(\alpha)} \int^t_0 (t-s)^{\alpha-1} |x_1^*(s)-x_2^*(s)|\,ds
    \eeaa
    The fractional Gr\"{o}nwall inequality \cite[Lemma~7.1.1]{Henry} yields $x_1^*(t) = x_2^*(t)$ for all $t\in[0,T]$.
    Thus the solution of \eqref{fixed} is unique on $[0,T]$.
    
    Therefore, $\Phi:\cX(T)\to \cX(T)$ has a unique fixed point for every $T>0$.
    This proves the existence and uniqueness of the threshold location $x^*\in C[0,+\infty)$ satisfying $x^*(0)=0$ and \eqref{fixed}.
    
    The proof is complete.
\end{proof}

\medskip
By Propositions~\ref{prop:fixed} and~\ref{prop:exi-uni}, the function 
\beaa
    u(t,x):=U(t,x;x^*) 
\eeaa
is the unique global front solution to Equation~\eqref{eq:main} with \eqref{init:main}.
Together with Lemma~\ref{lem:U}, this proves the existence and uniqueness part of Theorem~\ref{thm:global}.

\medskip
\subsection{Short- and long-time asymptotics of the threshold location}

In this subsection, we investigate the behavior of the threshold location near the initial time and as $t\to+\infty$. 

We first derive its short-time asymptotic expansion.
\begin{proposition}\label{prop:asym-0}
    The threshold location $x^*(t)$ satisfies \eqref{xs:short}.
\end{proposition}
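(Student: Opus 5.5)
The plan is to work directly with the defining identity \eqref{fixed}, $U(t,x^*(t);x^*)=u_T$, and expand both terms for small $t$. Using \eqref{eq:g2f}, I rewrite it as
\begin{equation*}
f_\alpha(t)\{u_0(x^*(t))-u_T\} + \int_0^t g_\alpha(t-s)\{\cK(x^*(t)-x^*(s))-u_T\}\,ds = 0 .
\end{equation*}
The first step is the a priori bound $|x^*(t)|\le R t^\alpha$ for small $t$. This follows from Lemma~\ref{lem:Phi_invariant}, since $x^*=\Phi(x^*)\in\cX_R(T)$; in particular $x^*(t)\to0$ as $t\to+0$.

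The second step treats the memory term. By continuity of $x^*$ with $x^*(0)=0$, we have $\sup_{0\le s\le t}|x^*(t)-x^*(s)|\le 2\sup_{[0,t]}|x^*|\to 0$. Since $\cK$ is Lipschitz with constant $\|K\|_{L^\infty}$ and $\cK(0)=\kappa/2$, I will bound
\begin{equation*}
\Bigl|\int_0^t g_\alpha(t-s)\{\cK(x^*(t)-x^*(s))-u_T\}\,ds - \Bigl(\frac{\kappa}{2}-u_T\Bigr)(1-f_\alpha(t))\Bigr| \le 2\|K\|_{L^\infty}\sup_{[0,t]}|x^*|\,(1-f_\alpha(t)).
\end{equation*}
From the series definition, $1-f_\alpha(t)=t^\alpha/\Gamma(1+\alpha)+O(t^{2\alpha})$. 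Hence the memory term equals $\frac{\kappa-2u_T}{2\Gamma(1+\alpha)}t^\alpha+o(t^\alpha)$.

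The third step expands the initial-data term. Since $u_0\in C^1$ and $x^*(t)\to0$, we have $u_0(x^*(t))-u_T=u_0'(0)x^*(t)+o(|x^*(t)|)$. By step one, the error is $o(t^\alpha)$, and since $f_\alpha(t)\to1$,
\begin{equation*}
u_0'(0)\,x^*(t)+o(t^\alpha)+\frac{\kappa-2u_T}{2\Gamma(1+\alpha)}t^\alpha+o(t^\alpha)=0 .
\end{equation*}
Dividing by $t^\alpha$ and using $u_0'(0)\neq 0$ gives \eqref{xs:short}.

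The only delicate point is the a priori $O(t^\alpha)$ bound, which is needed so that the Taylor remainder of $u_0$ is $o(t^\alpha)$. This comes for free from the invariance Lemma~\ref{lem:Phi_invariant}, combined with the uniqueness in Proposition~\ref{prop:exi-uni}, which guarantees that the fixed point found in $\cX_R(T)$ is $x^*$ itself. The rest is routine.
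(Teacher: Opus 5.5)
Your proof is correct and follows the paper's argument: the a priori bound $x^*(t)=O(t^\alpha)$ coming from the construction in Lemma~\ref{lem:Phi_invariant}, the first-order expansion of $u_0$ at $0$, and the expansion $1-f_\alpha(t)=t^\alpha/\Gamma(1+\alpha)+o(t^\alpha)$ combined with $\cK(x^*(t)-x^*(s))\to\cK(0)=\kappa/2$ uniformly in the memory term. The only differences are cosmetic: you subtract $u_T$ inside both terms via \eqref{eq:g2f}, and you use the Lipschitz bound on $\cK$ rather than mere continuity.
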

\begin{proof}
    Note that the construction of $x^*(t)$ in the previous subsection yields $x^*(t)=O(t^\alpha)$ as $t\to+0$.
    By the definition of the Mittag--Leffler function, we deduce 
    \beaa 
        f_\alpha(t) = 1-\frac{t^\alpha}{\Gamma(1+\alpha)} + o(t^\alpha).
    \eeaa
    Since $u_0(0)=u_T$ and $u_0$ is differentiable at $x=0$, we have
    \beaa
        u_0\bigl(x^*(t)\bigr) = u_T+u_0'(0)x^*(t)+o(t^\alpha). 
    \eeaa
    Consequently,
    \be\label{lim0-1}
        f_\alpha(t)u_0(x^*(t))= u_T+u'_0(0)x^*(t)
        -\frac{u_T}{\Gamma(1+\alpha)}t^\alpha+o(t^\alpha)
    \ee
    as $t\to+0$.

    For $0\leq r\leq t$, we have
    \beaa
        \left| x^*(t)-x^*(t-r) \right| = O(t^\alpha) 
    \eeaa
    uniformly in $r$.
    Since $\cK$ is continuous and $\cK(0)=\kappa/2$, it follows that
    \beaa
        \sup_{0\le r\le t}\left|\cK(x^*(t)-x^*(t-r))-\mathcal K(0)\right|\to 0.
    \eeaa
    Moreover, 
    \beaa
        \int^t_0 g_\alpha(r)\,dr = 1-f_\alpha(t) = \frac{t^\alpha}{\Gamma(1+\alpha)} + o(t^\alpha). 
    \eeaa
    Therefore, 
    \be\label{lim0-2}
    \int^t_0 g_\alpha(r)\cK(x^*(t)-x^*(t-r))\,dr
    =\frac{\kappa}{2\Gamma(1+\alpha)}t^\alpha+o(t^\alpha).
    \ee
    as $t\to+0$.

    Since $x^*(t)$ satisfies Equation~\eqref{fixed},
    we obtain
    \beaa
        u_0'(0)x^*(t) +\frac{\kappa-2u_T}{2\Gamma(1+\alpha)}t^\alpha+o(t^\alpha)=0
    \eeaa
    from \eqref{lim0-1} and \eqref{lim0-2}.
    Note that $u_0'(0)<0$.
    This proves \eqref{xs:short}.
\end{proof}

\medskip
We next determine the asymptotic propagation speed of the threshold location.
For later use, we first introduce some basic properties of $G_\alpha$, which are easy to verify from Lemma~\ref{lem:ML}.
\begin{lemma}\label{lem:properties-G}
    The function $G_\alpha$ is continuous and strictly decreasing on $\bR$. 
    Moreover,
    \beaa
    \lim_{c\to-\infty}G_\alpha(c)=\kappa,
    \qquad
    G_\alpha(0)=\frac{\kappa}{2},
    \qquad
    \lim_{c\to+\infty}G_\alpha(c)=0.
    \eeaa
    For $c\neq0$, it also admits the representation
    \beaa
    G_\alpha(c)=
    \frac{\kappa}{2}
    -
    c\int_0^\infty f_\alpha(r)K(cr)\,dr.
    \eeaa
\end{lemma}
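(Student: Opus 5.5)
We need to prove Lemma~\ref{lem:properties-G}: continuity, strict monotonicity, the limits and value at zero, and the representation for $c\neq0$.

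\emph{Continuity and limits.} Continuity of $G_\alpha$ on $\bR$ follows from dominated convergence. Indeed, $\cK$ is continuous and satisfies $0\le\cK\le\kappa$, and $g_\alpha\in L^1(0,\infty)$ with $\int_0^\infty g_\alpha=1$; the last identity follows from \eqref{eq:g2f} together with $f_\alpha(t)\to0$ from Lemma~\ref{lem:ML}(i). The same dominated convergence argument gives the limits at $\pm\infty$. For each fixed $r>0$ we have $\cK(cr)\to\kappa$ as $c\to-\infty$ and $\cK(cr)\to0$ as $c\to+\infty$, so $G_\alpha(c)\to\kappa\int_0^\infty g_\alpha=\kappa$ and $G_\alpha(c)\to 0$ respectively. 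Finally, $G_\alpha(0)=\cK(0)\int_0^\infty g_\alpha=\kappa/2$.

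\emph{Strict monotonicity.} Since $\cK$ is non-increasing, $G_\alpha$ is non-increasing. For strictness, take $c_1<c_2$. Then $\cK(c_1r)-\cK(c_2r)=\int_{c_1r}^{c_2r}K(y)\,dy\ge0$ for every $r>0$. Because $K(0)>0$ and $K$ is continuous, $K>0$ on some interval $(-\epsilon,\epsilon)$. For all sufficiently small $r>0$, the interval $(c_1r,c_2r)$ meets $(-\epsilon,\epsilon)$ in a set of positive length, so the integrand above is positive on a set of positive measure. Since $g_\alpha>0$ on $(0,\infty)$, this gives $G_\alpha(c_1)>G_\alpha(c_2)$.

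\emph{Representation for $c\neq0$.} We integrate by parts using $g_\alpha=-f_\alpha'$ and write
\[
G_\alpha(c)=\int_0^\infty g_\alpha(r)\cK(cr)\,dr=\bigl[-f_\alpha(r)\cK(cr)\bigr]_0^\infty-c\int_0^\infty f_\alpha(r)K(cr)\,dr .
\]
Here we used $\frac{d}{dr}\cK(cr)=-cK(cr)$. For the boundary terms, $f_\alpha(0)=1$ and $\cK(0)=\kappa/2$ give the contribution $\kappa/2$ at $r=0$. At infinity, $f_\alpha(r)\to0$ while $\cK$ stays bounded, so that term vanishes.

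The step needing care is justifying the integration by parts near $r=0$, where $g_\alpha$ has an integrable singularity, and at infinity. We do this on $[\delta,R]$, where everything is $C^1$, and then let $\delta\to0$ and $R\to\infty$. The limits exist for the following reasons. The left side converges since $g_\alpha\in L^1$. The right integral converges absolutely since $0\le f_\alpha\le1$ and $|c|\int_0^\infty K(cr)\,dr\le\kappa$, which is finite.
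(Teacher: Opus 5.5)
Your proof is correct and follows essentially the argument the paper has in mind. The paper gives no separate proof and only says the properties follow easily from Lemma~\ref{lem:ML}, but where it treats the same quantities elsewhere it uses exactly your steps: dominated convergence with $\int_0^\infty g_\alpha=1$, strictness from $K(0)>0$ for small $r$ as in \eqref{sol:strict}, and integration by parts with $g_\alpha=-f_\alpha'$ as in \eqref{eq:phi-parts-positive} at $z=0$. Your truncation to $[\delta,R]$ supplies the justification of the boundary terms that the paper leaves implicit.
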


\medskip
Let us consider the asymptotic behavior of the threshold location.

\begin{proposition}\label{prop:asym-inf}
    The threshold location $x^*(t)$ satisfies \eqref{xs:long}.
\end{proposition}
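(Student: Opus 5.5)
The plan is a comparison argument at ``record times'', applied directly to the fixed-point identity~\eqref{fixed}. Using \eqref{sol:mild} with the substitution $r=t-s$, \eqref{fixed} reads
\be\label{plan:identity}
u_T = f_\alpha(t)\,u_0\bigl(x^*(t)\bigr) + \int_0^t g_\alpha(r)\,\cK\bigl(x^*(t)-x^*(t-r)\bigr)\,dr \qquad (t\ge 0).
\ee
I will show separately that $\limsup_{t\to+\infty} x^*(t)/t\le c^*$ and $\liminf_{t\to+\infty} x^*(t)/t\ge c^*$. Two facts will be used throughout. First, $f_\alpha(t)\to0$ as $t\to+\infty$ by Lemma~\ref{lem:ML}(i), and $u_0$ is bounded, so the first term in \eqref{plan:identity} tends to $0$ whatever $x^*(t)$ does. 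Second, by Lemma~\ref{lem:properties-G}, $G_\alpha$ is strictly decreasing, so $G_\alpha(c)<u_T$ for $c>c^*$ and $G_\alpha(c)>u_T$ for $c<c^*$.

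\emph{Upper bound.} Suppose $\limsup_{t\to+\infty} x^*(t)/t > c^*$, and fix $c$ with $c^*<c<\limsup_{t\to+\infty} x^*(t)/t$. Set $w(t):=x^*(t)-ct$. Then $w$ is continuous, $w(0)=0$, and $w$ is unbounded above, since $w(t)\ge \eps t$ along a sequence $t\to\infty$ for some $\eps>0$. Hence there are times $t_n\to+\infty$ at which $w$ attains its running maximum: $w(t_n)=\max_{0\le s\le t_n}w(s)$. At such a time,
\beaa
x^*(t_n)-x^*(t_n-r)\ \ge\ cr \qquad (0\le r\le t_n).
\eeaa
Since $\cK$ is non-increasing, $\cK\bigl(x^*(t_n)-x^*(t_n-r)\bigr)\le \cK(cr)$. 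Substituting into \eqref{plan:identity} gives
\beaa
u_T\ \le\ f_\alpha(t_n)\,\|u_0\|_{L^\infty(\bR)} + \int_0^{t_n} g_\alpha(r)\,\cK(cr)\,dr .
\eeaa
As $n\to\infty$, the first term tends to $0$. The integral tends to $G_\alpha(c)$ by monotone convergence, because $g_\alpha\ge0$, $g_\alpha$ is integrable and $\cK\ge0$ is bounded. Hence $u_T\le G_\alpha(c)<u_T$, a contradiction.

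\emph{Lower bound.} This is symmetric. If $\liminf_{t\to+\infty} x^*(t)/t<c<c^*$, then $w(t)=x^*(t)-ct$ is unbounded below. Taking running-minimum times $t_n\to\infty$ gives $x^*(t_n)-x^*(t_n-r)\le cr$ for $0\le r\le t_n$, and therefore $\cK\bigl(x^*(t_n)-x^*(t_n-r)\bigr)\ge\cK(cr)$. Then \eqref{plan:identity} yields $u_T\ge o(1)+\int_0^{t_n}g_\alpha(r)\cK(cr)\,dr\to G_\alpha(c)>u_T$, again a contradiction. Together the two bounds give \eqref{xs:long}.

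The only delicate point I expect is the choice of the record times. Fixing the comparison slope $c$ strictly between $c^*$ and the $\limsup$ (or $\liminf$) is exactly what makes $w$ unbounded, so that running extrema occur at arbitrarily large times. Beyond that, the argument only needs the monotonicity of $\cK$, the decay of $f_\alpha$, and the strict monotonicity of $G_\alpha$.
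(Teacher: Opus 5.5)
Your proof is correct and is essentially the paper's argument. The paper fixes $q>c^*$ and shows that the barrier $x^*(t)<qt+a$ is never touched: a first contact time is exactly a running-maximum time of $x^*(t)-qt$, and the contradiction comes from the same estimate $u_T\le \kappa f_\alpha(t_0)+G_\alpha(q)$ via the monotonicity of $\cK$. The only difference is cosmetic: the paper chooses $a$ so that contact can only occur after a time $T_0$ with $\kappa f_\alpha(T_0)<\bigl(u_T-G_\alpha(q)\bigr)/2$, which yields an explicit linear bound, whereas you argue by contradiction and let the record times $t_n\to\infty$.
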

\begin{proof}
    Let $c^*$ be the unique solution of Equation~\eqref{eq:speed}.
    We first prove that 
    \be\label{speed-limsup}
        \limsup_{t\to+\infty} \dfrac{x^*(t)}{t} \le c^*
    \ee

    Fix $q>c^*$. 
    Set $\gamma := u_T-G_\alpha(q) > 0$. 
    Choose $T_0>0$ so large that $\kappa f_\alpha(t) < \gamma/2$ holds for any $t\geq T_0$.
    By the continuity of $x^*$, we may choose $a>0$ such that 
    $x^*(t)<qt+a$ for all $t\in[0,T_0]$.
    We claim that 
    \be\label{eq:upper-barrier}
        x^*(t)<qt+a\quad (\forall t\ge 0).
    \ee

    Suppose otherwise. 
    Then there exists a first contact time $t_0>T_0$ such that 
    \beaa
        x^*(t)<qt+a \quad (0\leq t<t_0), \qquad x^*(t_0)=qt_0+a. 
    \eeaa
    This implies that $x^*(t_0)-x^*(t_0-r) > qr$ for every $r\in(0,t_0]$. 
    Since $\cK$ is non-increasing and $0<u_0<\kappa$, we obtain 
    \beaa 
        u_T &=& f_\alpha(t_0)u_0\bigl(x^*(t_0)\bigr) + \int^{t_0}_0 g_\alpha(r) \cK\bigl(x^*(t_0)-x^*(t_0-r)\bigr)\,dr \\ 
        &\leq& \kappa f_\alpha(t_0) + \int^{t_0}_{0} g_\alpha(r)\cK(qr)\,dr  
        < \frac{\gamma}{2} + G_\alpha(q) = u_T-\frac{\gamma}{2},
    \eeaa
    which is a contradiction. 
    Hence \eqref{eq:upper-barrier} holds.

    Dividing \eqref{eq:upper-barrier} by $t$ and letting $t\to+\infty$, we obtain 
    \beaa
        \limsup_{t\to+\infty} \frac{x^*(t)}{t} \leq q. 
    \eeaa
    Since $q>c^*$ is arbitrary, \eqref{speed-limsup} follows. 
    
    Similarly, we obtain
    \be\label{speed:liminf}
        \liminf_{t\to+\infty} \dfrac{x^*(t)}{t} \ge c^*
    \ee
    Combining \eqref{speed-limsup} and \eqref{speed:liminf}, we obtain \eqref{xs:long}.
\end{proof}

\medskip
Now, we give the proof of Theorem~\ref{thm:global}.
\begin{proof}[Proof of Theorem~2.2]
Proposition~\ref{prop:exi-uni} and Lemma~\ref{lem:U} establish the existence, uniqueness, and
both-side limits of the global front solution. 
The short- and long-time
asymptotics of its threshold location follow from Propositions~\ref{prop:asym-0} and
\ref{prop:asym-inf}, respectively. 
This completes the proof.
\end{proof}

\medskip
\subsection{Asymptotic linearity of threshold increments}

In this subsection, we establish the following property of the threshold location that will be used in the proof of convergence to the asymptotic traveling wave. 
\begin{proposition}\label{prop:dif}
    Suppose that $\kappa \neq 2u_T$.
    For every $L>0$, we have
    \beaa
        \dlim_{t\to+\infty}\sup_{0\le r\le L}|x^*(t)-x^*(t-r) - c^*r| = 0.
    \eeaa
\end{proposition}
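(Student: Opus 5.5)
The plan is a compactness--rigidity argument for the shifted increments. For $t_n\to+\infty$ set $w_n(r):=x^*(t_n)-x^*(t_n-r)$ for $0\le r\le t_n$. The threshold identity \eqref{fixed} at time $t_n$ reads
\begin{equation*}
u_T=f_\alpha(t_n)u_0\bigl(x^*(t_n)\bigr)+\int_0^{t_n}g_\alpha(r)\cK\bigl(w_n(r)\bigr)\,dr ,
\end{equation*}
and the first term tends to $0$. To pass to a limit I need uniform local bounds and equicontinuity of $w_n$. On bounded $r$-intervals I would get equicontinuity from a quantitative version of the argument in Lemma~\ref{lem:Phi_compact}, applied at large times: $|x^*(t)-x^*(s)|\le \omega(|t-s|)/m$, with $m$ a lower bound for $-\partial_xU$ near the front. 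The obstacle is that $f_\alpha(t)u_0'$ decays. However, $-\partial_xU$ also contains $\int_0^t g_\alpha(r)K(w(r))\,dr$, which is bounded below whenever $|w(r)|$ stays bounded for $r$ in a fixed window, because $K(0)>0$ and $K$ is continuous. This gives a self-improving (bootstrap) estimate on short windows. I expect this to be the main technical obstacle. A cleaner route may be to use the barrier argument of Proposition~\ref{prop:asym-inf} in shifted form: for $q>c^*$, comparison from a contact time $s$ should give $x^*(t)-x^*(s)\le q(t-s)+a(\epsilon)$ once $s$ is large. This yields one-sided Lipschitz-type bounds $c^*r-\epsilon r-a\le w_n(r)\le c^*r+\epsilon r+a$, uniformly for large $t_n$. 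I would aim to make $a$ arbitrarily small by first running the argument with the coarse bound.

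Granting local uniform bounds and equicontinuity, Arzel\`a--Ascoli plus a diagonal argument give a subsequence with $w_n\to w$ locally uniformly on $[0,\infty)$, where $w(0)=0$. Moreover, $w(r)=x^*(t_n)-x^*(t_n-r)$ inherits the cocycle structure, so that for every $s\ge0$ the shifted limits $w(r+s)-w(s)$ arise from the times $t_n-s$. Dominated convergence, using that $g_\alpha$ is integrable and $\cK$ is bounded, together with Proposition~\ref{prop:asym-inf} to control the tail $r$ large (where $w_n(r)\approx c^*r$), should give
\begin{equation*}
u_T=\int_0^\infty g_\alpha(r)\cK\bigl(w(r+s)-w(s)\bigr)\,dr\qquad(\forall s\ge0),
\end{equation*}
with $w(r)/r\to c^*$ and, from the barrier step, $|w(r)-c^*r|=o(r)$ uniformly in shifts.

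The rigidity step is to show that any such $w$ equals $c^*r$. Set $M:=\sup_{r\ge0}\bigl(w(r)-c^*r\bigr)$; I first need $M<\infty$, which should follow from the barrier bounds with a uniform constant. Pick $s_k$ with $w(s_k)-c^*s_k\to \inf$ and pass to a further limit of shifts. This yields a limiting $\tilde w$ satisfying the same equation with $\tilde w(r)\ge c^*r$ for all $r$. Then $\cK(\tilde w(r))\le \cK(c^*r)$, and since the integrals agree, equality holds a.e. wherever $g_\alpha>0$, that is, everywhere. Where $\cK$ is strictly decreasing, which holds near $c^*r$ at least for $r$ small because $K(0)>0$, this forces $\tilde w(r)=c^*r$ on an initial interval. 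Propagating through the shift identity then extends this to all $r$. The hypothesis $\kappa\neq2u_T$, i.e.\ $c^*\ne0$, is used so that the points $c^*r$ sweep across the region where $K>0$; it is precisely what fails in Example~\ref{exam:non-unique}. A symmetric argument with the supremum, together with a sandwich from the extremal shifts, should show $w(r)=c^*r$. Since every subsequential limit is $c^*r$, the full family converges uniformly on $[0,L]$, which is the statement.
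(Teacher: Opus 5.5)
\textbf{There is a genuine gap at the compactness step. It comes from a missing ingredient: the monotonicity of $x^*$.}

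The paper first proves that for $\kappa\neq2u_T$ the threshold location is strictly monotone (Lemma~\ref{lem:positive}, via a first-hitting-time argument and the fractional Gr\"onwall inequality). This is where the hypothesis actually enters; in the balanced case the statement is trivial, since $x^*\equiv0$. Monotonicity then feeds directly into the threshold equation:
\begin{itemize}
\item it gives the uniform step bound $0\le x^*(t)-x^*(t-h)\le R$ at large times, hence local bounds;
\item compactness then follows from Helly's theorem rather than Arzel\`a--Ascoli;
\item continuity of the limit is proved afterwards, by excluding jumps using $K\ge k_0$ on $[0,\eta]$.
\end{itemize}

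Neither of your substitutes delivers this.
\begin{itemize}
\item \emph{Bootstrap.} The lower bound $-\partial_xU\ge k_0(1-f_\alpha(\delta))$ presupposes that $x^*$ oscillates by less than about $\eta$ on windows of length $\delta$. But the resulting estimate is $\omega(\delta)/m=(\|u_0\|_{L^\infty}+2\kappa)/k_0$, which is independent of $\delta$ and exceeds $\eta$ (because $k_0\eta\le\int_0^\eta K\le\kappa/2$). So it never reproduces its own hypothesis. Taking steps $h\ll\delta$ and summing gives a bound of order $\delta h^{\alpha-1}$, which is worse.
\item \emph{Shifted barrier.} At the first contact time $t_0>s$ you control $x^*(t_0)-x^*(t_0-r)$ only for $r\le t_0-s$. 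For $r>t_0-s$ you need $x^*(s)-x^*(s-\rho)\ge0$, which is exactly monotonicity. The past before $s$ carries weight about $f_\alpha(t_0-s)$, which is not small when $t_0-s$ is small.
\item Even granting monotonicity, the barrier only yields a fixed, large constant $a$ (one with $\cK(a)$ small compared with $u_T-G_\alpha(q)$). Making $a$ arbitrarily small would be the proposition itself.
\end{itemize}

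\textbf{The rigidity step also has gaps.}

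First, your barrier bounds give only $w(r)-c^*r=o(r)$, not $\sup_{r\ge0}|w(r)-c^*r|<\infty$. So the extremal values you translate to may be infinite.

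Second, suppose they are finite and the extremal shift-limits $\tilde w$ are exactly linear. This only says that $w(\cdot+s_k)-w(s_k)$ is close to $c^*r$ beyond the near-extremal points $s_k$. It says nothing about $w$ on $[0,s_k]$. The ``sandwich'' does not bridge this; you would need, for example, forward uniqueness for the limiting equation given its past, which you do not establish.

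The paper avoids both problems by working with the fixed-lag increments $D_\theta(s)=q(s+\theta)-q(s)$, which are bounded thanks to the step bound. Its argument runs as follows:
\begin{enumerate}
\item Realize $\sup D_\theta$ and $\inf D_\theta$ by translation limits.
\item Propagate the extremal value backward, using strict monotonicity of $\cK$ near $0$. This makes $Q(s)-(\mu/\theta)s$ $\theta$-periodic on a half-line.
\item Evaluate the equation at the maximum and at the minimum of this periodic part to get $G_\alpha(\mu/\theta)=u_T$.
\end{enumerate}
Hence $\sup D_\theta=\inf D_\theta=c^*\theta$ for every $\theta>0$, which gives linearity of $q$ directly.
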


\medskip
Throughout this subsection, we treat only the case $\kappa>2u_T$, since the case $\kappa<2u_T$ follows along the same lines.
We first establish the sign and monotonicity of the threshold location.

\begin{lemma}\label{lem:positive}
    Assume that $\kappa\neq2u_T$. 
    Then $x^*(t)$ has the same sign as $c^*$ for every $t>0$.
    Moreover, $x^*$ is strictly increasing when $c^*>0$, and strictly decreasing when $c^*<0$.
\end{lemma}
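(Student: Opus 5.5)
We treat the case $\kappa>2u_T$, i.e.\ $c^*>0$; the case $c^*<0$ follows by the symmetry $x\mapsto -x$, $u\mapsto\kappa-u$. The plan has two steps. Positivity comes first and follows from the short-time result. Strict monotonicity is then proved by a first-contact (comparison) argument applied to time-shifted copies of the equation \eqref{fixed}.

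\emph{Positivity.} By \eqref{xs:short}, $x^*(t)/t^\alpha\to (\kappa-2u_T)/(2|u_0'(0)|\Gamma(1+\alpha))>0$. Hence $x^*(t)>0$ on some interval $(0,\delta)$. Suppose $x^*$ vanishes at a later time, and let $t_0>0$ be the first zero, so $x^*(t_0)=0$ and $x^*(s)>0$ on $(0,t_0)$. Write \eqref{fixed} at $t_0$ as
\[
u_T=f_\alpha(t_0)u_0(0)+\int_0^{t_0}g_\alpha(t_0-s)\cK(-x^*(s))\,ds .
\]
Since $u_0(0)=u_T$, since $\cK(-x^*(s))\ge \cK(0)=\kappa/2$, and since $\int_0^{t_0} g_\alpha=1-f_\alpha(t_0)$, the right-hand side is at least $u_T f_\alpha(t_0)+\frac{\kappa}{2}(1-f_\alpha(t_0))$. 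This quantity is strictly greater than $u_T$ because $\kappa/2>u_T$ and $1-f_\alpha(t_0)>0$. This is a contradiction, so $x^*(t)>0$ for all $t>0$.

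\emph{Monotonicity.} Fix $h>0$ and set $y(t):=x^*(t+h)$; we compare $y$ with $x^*$. Rewrite \eqref{fixed} at time $t+h$ by splitting the memory integral at $s=h$:
\[
u_T=f_\alpha(t+h)u_0(y(t))+\int_0^{h} g_\alpha(t+h-s)\cK(y(t)-x^*(s))\,ds+\int_0^t g_\alpha(t-s)\cK(y(t)-y(s))\,ds .
\]
At $t=0$ we have $y(0)=x^*(h)>0=x^*(0)$, so $y>x^*$ holds on a neighbourhood of $0$. Suppose there is a first time $t_0>0$ with $y(t_0)=x^*(t_0)=:X$, while $y(s)>x^*(s)$ for $s<t_0$. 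Subtracting the equation for $x^*$ at $t_0$ from the one above, the terms over $[0,t_0]$ satisfy
\[
\cK(X-y(s))\ge \cK(X-x^*(s)) ,
\]
because $y(s)\ge x^*(s)$ and $\cK$ is non-increasing. What remains is to show that the ``history'' terms satisfy
\[
f_\alpha(t_0+h)u_0(X)+\int_0^h g_\alpha(t_0+h-s)\cK(X-x^*(s))\,ds>f_\alpha(t_0)u_0(X).
\]
Equivalently, using $f_\alpha(t_0)-f_\alpha(t_0+h)=\int_{t_0}^{t_0+h}g_\alpha$, we need $\int_0^h g_\alpha(t_0+h-s)\bigl[\cK(X-x^*(s))-u_0(X)\bigr]ds>0$. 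Once this holds, the comparison of the two equations gives a contradiction, which yields $x^*(t+h)>x^*(t)$ for all $t$ and hence strict monotonicity.

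\emph{Main obstacle.} The hard step is verifying that last positivity inequality, since the sign of $\cK(X-x^*(s))-u_0(X)$ is not obvious. The plan is to avoid a direct comparison of $\cK$ with $u_0$ and instead use the equation for $x^*$ itself. The first option is an induction on time intervals of length $h$. On $[0,h]$ we would show that $u(s,X)\le \cK(X-x^*(s))$ in the appropriate sense. The second option, which we would try first, is to show that $t\mapsto u(t,x)$ is non-decreasing for each fixed $x$. This would be obtained by applying the comparison principle for the time-fractional equation, via Lemma~\ref{lem:rep} and the positivity of $g_\alpha$, to $u(\cdot+h,\cdot)$ versus $u$, together with the fact that $\cK(x-x^*(s))\ge \cK(x)$ once $x^*\ge0$. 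Given that time-monotonicity, the front-solution property \eqref{sol:front} immediately forces $x^*$ to be non-decreasing. Strictness then follows from the strict inequality in \eqref{deri} combined with the positive contribution of $K(0)>0$ near the front.
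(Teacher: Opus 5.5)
Your positivity step is correct and matches the paper's argument. The monotonicity step has a genuine gap. The first-contact comparison of $x^*(\cdot+h)$ with $x^*$ reduces, as you note yourself, to
\[
\int_0^h g_\alpha(t_0+h-s)\bigl[\cK(X-x^*(s))-u_0(X)\bigr]\,ds>0,
\]
and the proposal never establishes this. Neither suggested remedy works.

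Time-monotonicity of $u(\cdot,x)$ is false for general data satisfying \eqref{condi:init}. Since $x^*(s)=O(s^\alpha)$ and $f_\alpha(t)=1-t^\alpha/\Gamma(1+\alpha)+o(t^\alpha)$, we have
\[
u(t,x)-u_0(x)=\frac{t^\alpha}{\Gamma(1+\alpha)}\bigl(\cK(x)-u_0(x)\bigr)+o(t^\alpha)\qquad(t\to+0).
\]
So $u(\cdot,x)$ initially decreases wherever $u_0(x)>\cK(x)$. For instance, if $\operatorname{supp}K\subset[-1,1]$, this happens at every $x>1$, where $\cK(x)=0<u_0(x)$.

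Nor can a comparison principle be applied to $u(\cdot+h,\cdot)$ versus $u$. Because the Caputo derivative has memory, the time-shifted function does not solve the Cauchy problem with datum $u(h,\cdot)$. It carries an extra history term over $[0,h]$, whose sign is exactly the unknown sign of $\partial_t u$ there. Your first option, as stated, fails already at $s=0$, where it would require $u_0(X)\le\cK(X)$. In short, comparing with time shifts forces a comparison between $u_0$ and $\cK$, and that has no definite sign.

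The missing idea is to fix a spatial level rather than a time shift, and to compare with the constant $\kappa/2$ instead of with $\cK$. The paper fixes $m>0$ and its first hitting time $t_m$, so that $x^*\le m$ on $[0,t_m]$. For $t\ge t_m$ it subtracts the mild formula for $u(t_m,m)-\kappa/2$ from that for $u(t,m)-\kappa/2$. Every contribution from before $t_m$ is then nonnegative:
\begin{itemize}
\item the datum term equals $(\kappa/2-u_0(m))\bigl(f_\alpha(t_m)-f_\alpha(t)\bigr)\ge0$, because $u_0(m)<u_T<\kappa/2$;
\item the memory over $[0,t_m]$ equals $\int_0^{t_m}\bigl[g_\alpha(t_m-s)-g_\alpha(t-s)\bigr]\bigl(\kappa/2-\cK(m-x^*(s))\bigr)\,ds\ge0$, because $g_\alpha$ is decreasing and $\cK(m-x^*(s))\le\kappa/2$.
\end{itemize}
The remaining term over $[t_m,t]$ is bounded below by $-\|K\|_{L^\infty(\bR)}\int_{t_m}^t g_\alpha(t-s)(m-x^*(s))_+\,ds$. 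Together with \eqref{deri}, this gives a Volterra inequality for $d=(m-x^*)_+$ on $[t_m,T]$. The fractional Gr\"onwall inequality then yields $x^*\ge m$ after $t_m$. Strictness follows from $f_\alpha(t_m)>f_\alpha(t)$ for $t>t_m$.
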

\begin{proof} 
    By \eqref{xs:short}, $x^*(t)>0$ holds for all sufficiently small $t>0$.
    Suppose that $x^*$ vanishes at some positive time, and let $t_0>0$ be the first such time. 
    Then 
    \beaa
        x^*(t)>0 \quad (0<t<t_0), \qquad x^*(t_0)=0. 
    \eeaa
    Since $\cK$ is non-increasing, 
    \beaa
        \cK\bigl(-x^*(s)\bigr) \geq \mathcal K(0) = \frac{\kappa}{2} \qquad (0<s<t_0). 
    \eeaa
    Thus, we obtain
    \beaa
    u_T \geq f_\alpha(t_0)u_T + \frac{\kappa}{2} \{1-f_\alpha(t_0)\}.
    \eeaa
    from \eqref{fixed}.
    Since $f_\alpha(t_0)<1$, this implies $\kappa\leq 2 u_T$, which is a contradiction. 
    Hence $x^*(t)>0$ for every $t>0$. 
    
    We next prove that $x^*$ is strictly increasing.
    Fix $m>0$.
    By Proposition~\ref{prop:asym-inf}, $x^*(t)\to+\infty$ as $t\to+\infty$.
    Hence, by continuity and $x^*(0)=0$, the first hitting time
    \beaa
        t_m:=\min\{t\ge0:x^*(t)=m\}
    \eeaa
    is well-defined and positive. Moreover,
    \beaa
        x^*(t_m)=m, \qquad x^*(s)<m \quad (0\le s<t_m).
    \eeaa
    Set $d(t):=(m-x^*(t))_{+}$, where $(r)_{+}:= \max\{r,0\}$.
    We show that $d(t)\equiv 0$ for any $t\ge t_m$.

    Using the integral representation of $u$, we have
    \beaa
        u(t,m)-\dfrac{\kappa}{2} = f_\alpha(t)\left(u_0(m)-\dfrac{\kappa}{2}\right)  + \int^{t}_0 g_\alpha(t-s) \left[ \cK\bigl(m-x^*(s)\bigr)-\dfrac{\kappa}{2}\right]\,ds.
    \eeaa
    Since $u_0(m)<u_T<\kappa/2$ and
    \beaa
        \cK\bigl(m-x^*(s)\bigr)-\frac{\kappa}{2} \le0 \qquad (0\le s\le t_m),
    \eeaa
    subtracting the identity at $t=t_m$ and using the monotonicity of $g_\alpha$, we obtain
    \be\label{ieq:mono}
        u(t,m)-u_T\ge \left(\dfrac{\kappa}{2}-u_0(m)\right) \bigl[f_\alpha(t_m)-f_\alpha(t)\bigr] + \int^{t}_{t_m} g_\alpha(t-s) \left[ \cK\bigl(m-x^*(s)\bigr)-\frac{\kappa}{2} \right]\,ds
    \ee
    for every $t\ge t_m$.
    Moreover, since $\cK'=-K$ and $\cK(0)=\kappa/2$,
    \beaa
        \cK\bigl(m-x^*(s)\bigr)-\dfrac{\kappa}{2} \ge -\|K\|_{L^\infty(\bR)}d(s).
    \eeaa
    Fix $T>t_m$ and set
    \beaa
        c_T:= f_\alpha(T)\min_{0\le x\le m}\{-u_0'(x)\}>0.
    \eeaa
    Since $x^*(t)>0$, \eqref{deri} gives
    \beaa
        c_T d(t) \le \bigl(u_T-u(t,m)\bigr)_+ \qquad (t\in[t_m,T]).
    \eeaa
    Therefore, \eqref{ieq:mono} and the monotonicity of $f_\alpha$ imply
    \beaa
        d(t) \le \dfrac{\|K\|_{L^\infty(\bR)}}{c_T} \int^{t}_{t_m} g_\alpha(t-s)d(s)\,ds \qquad (t\in[t_m,T]).
    \eeaa
    Using \eqref{ieq:galpha},
    the fractional Gr\"{o}nwall inequality yields $d(t)=0$ for all $t\in[t_m,T]$.
    Since $T>t_m$ is arbitrary, we conclude that
    \beaa
        x^*(t)\ge m \qquad (t\ge t_m).
    \eeaa

    The integral term in \eqref{ieq:mono} is therefore nonnegative. 
    Since $f_\alpha$ is strictly decreasing,
    \beaa
        u(t,m)-u_T \ge \left(\frac{\kappa}{2}-u_0(m)\right) \bigl[f_\alpha(t_m)-f_\alpha(t)\bigr] >0 \qquad (t>t_m).
    \eeaa
    By the strict monotonicity of $u(t,\cdot)$, this implies
    \beaa
        x^*(t)>m \qquad (t>t_m).
    \eeaa

    Finally, let $0<s<t$ and set $m=x^*(s)>0$.
    Then $t_m\le s<t$, and hence
    \beaa
        x^*(t)>m=x^*(s).
    \eeaa
    Together with $x^*(0)=0<x^*(t)$ for $t>0$, this proves that $x^*$ is strictly increasing.
\end{proof}

\medskip
Next, we consider the limiting profile of $x^*(t)-x^*(t-r)$.
Extend the threshold location to the whole line by setting
\beaa
    \overline{x}^*(t):=
    \begin{cases}
        x^*(t) &(t\ge 0), \\
        0 &(t<0).
    \end{cases}
\eeaa
This extension is continuous and non-decreasing.
Since $x^*(0)=0$, the threshold equation~\eqref{fixed} can be rewritten as
\be\label{conv:eq-extended-threshold}
    u_T = b(t) + \int^{+\infty}_{0}  g_{\alpha}(r)  \cK(\overline{x}^*(t) - \overline{x}^*(t-r))\,dr,
\ee
where 
\beaa
    b(t) := 
    \begin{cases}
        f_{\alpha}(t)\bigl[ u_0(x^*(t)) - \cK(x^*(t)) \bigr] &(t\ge 0), \\
        u_T - \kappa/2 &(t<0).
    \end{cases}
\eeaa
Since $0\le u_0,\ \cK\le \kappa$, we have
\beaa
    |b(t)| \le \kappa f_{\alpha}(t)\longrightarrow 0\quad (t\to+\infty).
\eeaa

We first establish a compactness result that applies both to time translations of \eqref{conv:eq-extended-threshold} and to translations of solutions of its limiting equation.

\begin{lemma}\label{lem:compactness}
    Let $q_n\in C(\bR)$ be non-decreasing functions with $q_n(0)=0$.
    Suppose that $b_n:\bR\to\bR$ satisfy $b_n\to0$ locally uniformly on $\bR$ and that
    \be\label{conv:eq-approximate}
      u_T=b_n(s)
      +\int^{+\infty}_{0} g_\alpha(r)
        \cK\bigl(q_n(s)-q_n(s-r)\bigr)\,dr\qquad (s\in\bR).
    \ee
    Then there exist a subsequence, still denoted by $\{q_n\}$,
    and a continuous non-decreasing function $q:\bR\to\bR$ such that
    \beaa
        q_n\longrightarrow q\qquad\text{in }C_{\mathrm{loc}}(\bR).
    \eeaa
    Furthermore, $q(0)=0$ and
    \be\label{conv:eq-limit}   u_T=\int^{+\infty}_{0}g_\alpha(r)
        \cK\bigl(q(s)-q(s-r)\bigr)\,dr \qquad (s\in\bR).
    \ee
\end{lemma}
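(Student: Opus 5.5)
The plan is to apply Arzel\`a--Ascoli on each compact interval $[-N,N]$ and finish with a diagonal argument. Monotonicity of $q_n$ together with $q_n(0)=0$ reduces the needed bounds to two things: a uniform bound on $q_n(N)-q_n(-N)$, and a uniform modulus of continuity for the increments $q_n(s)-q_n(s-h)$. Both will come from the identity \eqref{conv:eq-approximate}. The key fact is that $r\mapsto \cK(q_n(s)-q_n(s-r))$ is non-increasing in $r$, since $q_n$ is non-decreasing and $\cK$ is non-increasing. Hence, for $0<h$,
\[
u_T-b_n(s)\le \Bigl(1-f_\alpha(h)\Bigr)\kappa+f_\alpha(h)\,\cK\bigl(q_n(s)-q_n(s-h)\bigr),
\]
by splitting the $r$-integral at $h$ and using \eqref{eq:g2f}.

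For the modulus of continuity, fix $\eps>0$. Since $\cK(+\infty)=0$ and $u_T>0$, choose $M$ with $\cK(M)<u_T/2$. Then choose $h_0$ small so that $(1-f_\alpha(h_0))\kappa<u_T/4$, and take $n$ large so that $|b_n|<u_T/8$ on $[-N,N]$. Suppose an increment over length $h\le h_0$ exceeded $M$. The display above would then give $u_T<u_T/8+u_T/4+u_T/2$, a contradiction. So increments over short intervals are uniformly bounded by $M$, and chaining gives a uniform bound on $q_n$ over $[-N,N]$.

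Equicontinuity, meaning small increments for small $h$, needs a finer argument. Here I would use the lower side as well. The same monotonicity gives
\[
u_T-b_n(s)\ge \int_h^\infty g_\alpha(r)\,\cK\bigl(q_n(s)-q_n(s-r)\bigr)\,dr .
\]
This alone does not bound the jump. Instead I would compare the identities at $s$ and at $s-h$. Write $D=q_n(s)-q_n(s-h)$. For $r\ge h$ one has
\[
q_n(s)-q_n(s-r)=D+\bigl(q_n(s-h)-q_n(s-h-(r-h))\bigr),
\]
so the identity at $s$ is the identity at $s-h$ with arguments shifted by $D$ and the kernel $g_\alpha(r)$ replaced by $g_\alpha(r+h)$ (after substituting $r\mapsto r+h$). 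Subtracting the two identities leaves a contribution of order $1-f_\alpha(h)$ from the mismatch $g_\alpha(r)-g_\alpha(r+h)$ and the piece on $[0,h]$, plus $|b_n(s)-b_n(s-h)|$. What remains is
\[
\int_0^\infty g_\alpha(r+h)\Bigl[\cK(y_r)-\cK(y_r+D)\Bigr]\,dr ,
\]
where $y_r=q_n(s-h)-q_n(s-h-r)\in[0,\,\text{bound}]$ for $r$ in a compact range, using the uniform bound just established. Since $K(0)>0$ and $K$ is continuous, $\cK$ is strictly decreasing near $0$, but not necessarily everywhere. Because the $y_r$ range over a region where $K$ might vanish, I would restrict to small $r$, where $y_r$ is small by a bootstrap. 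This gives a lower bound $\cK(y)-\cK(y+D)\ge c\min(D,\delta)$ for $y\le\delta$, and hence $D\to0$ uniformly as $h\to0$. The bookkeeping in this step is the main obstacle. It requires a uniform-in-$n$ smallness of $y_r$ for $r\le\rho$, which is circular, and I expect to resolve it by contradiction. Suppose jumps $D_n\ge\eta$ occur over $h_n\to0$. Then the mass of $g_\alpha$ on $[0,\rho]$ weighted by $\cK(y_r+D_n)$ is strictly less than the mass at $s-h_n$, by a definite amount, while all other terms vanish. Since $b_n\to0$ locally uniformly, both identities must equal $u_T$ in the limit, and this forces $\eta=0$.

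With equicontinuity and boundedness in hand, Arzel\`a--Ascoli and a diagonal argument give a subsequence with $q_n\to q$ in $C_{\mathrm{loc}}(\bR)$. The limit $q$ is continuous, non-decreasing and satisfies $q(0)=0$. To pass to the limit in \eqref{conv:eq-approximate}, for fixed $s$ the integrand converges pointwise in $r$ because $\cK$ is continuous. It is bounded by $\kappa g_\alpha(r)$, which is integrable on $(0,\infty)$, so dominated convergence together with $b_n(s)\to0$ yields \eqref{conv:eq-limit}.
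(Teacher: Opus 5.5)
\textbf{The local bound and the final limit passage are fine, but the equicontinuity step has a genuine gap.} Your local bound (split the $r$-integral at $h$, then chain) is correct and is the paper's Step~1. Your final dominated-convergence step is also correct once locally uniform convergence is known.

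The problem is the equicontinuity step. The contradiction you sketch does not break the circularity you yourself identified. Take a contradicting sequence: $D_k\ge\eta$ over $h_k\to0$ at points $s_k$, for the functions $q_{n_k}$. Subtracting the identities at $s_k$ and $s_k-h_k$ yields only
\[
\int_0^\infty g_\alpha(r+h_k)\bigl[\cK(y_r)-\cK(y_r+D_k)\bigr]\,dr\longrightarrow0,
\qquad y_r=q_{n_k}(s_k-h_k)-q_{n_k}(s_k-h_k-r).
\]
The kernel assumption only gives $K(0)>0$, so $K$ may vanish on large sets. Hence the bracket is bounded below only where $y_r$ is small, i.e.\ for $r\in(0,\rho_k)$, where $\rho_k$ depends on the modulus of continuity of $q_{n_k}$ at $s_k-h_k$. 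Nothing prevents $\rho_k\to0$: the base point $s_k-h_k$ may lie inside the same steep ramp of $q_{n_k}$ that produces $D_k$. The set where the integrand is bounded below then carries vanishing $g_\alpha$-mass, and no contradiction follows. Claiming a ``definite amount'' amounts to assuming the uniform-in-$n$ modulus you are trying to prove.

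\textbf{The missing idea is to pass to a limit before using the positivity of $K$ near $0$.}
\begin{itemize}
\item From your local bound, Helly's selection theorem and a diagonal argument give a non-decreasing $q$ with $q_n\to q$ at continuity points. Dominated convergence then yields the limit identity at every continuity point of $q$.
\item Suppose $q$ had a jump $J:=q(s_0^+)-q(s_0^-)>0$. Letting $s\to s_0^\pm$ through continuity points gives two identities. Their difference is $\int_0^\infty g_\alpha(r)\bigl[\cK(A(r))-\cK(A(r)+J)\bigr]\,dr=0$, with $A(r):=q(s_0^-)-q(s_0-r)$.
\item The fact that $A(r)\to0$ as $r\to+0$ is a property of the single monotone function $q$. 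It supplies exactly the smallness your argument lacked. With $K\ge k_0$ on $[0,\varepsilon_0]$, the integrand is at least $k_0\min\{J,\varepsilon_0/2\}\,g_\alpha(r)$ on some interval $(0,\delta)$, which gives a contradiction.
\item So $q$ is continuous. Pointwise convergence of monotone functions to a continuous limit is automatically locally uniform.
\end{itemize}
Equivalently, within your own scheme: $D_k\ge\eta$ with $s_k\to s_0$ and $h_k\to0$ forces $q(s_0^+)-q(s_0^-)\ge\eta$ for the Helly limit along a further subsequence, and that jump is excluded as above. This is the paper's route, and it makes Arzel\`a--Ascoli unnecessary.
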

\begin{proof}
    \textbf{Step 1: Local bounds.}
    Choose $h>0$ so small that
    \beaa
      \dfrac{\kappa}{2}\int^h_0 g_\alpha(r)\,dr<\frac{u_T}{4},
    \eeaa
    and then choose $R>0$ so large that $\cK(R)<u_T/2$.
    Whenever $|b_n(s)|\le u_T/4$, the monotonicity of $q_n$ and $\cK$ gives
    \beaa
        u_T &\le&\dfrac{u_T}{4} +\dfrac{\kappa}{2}\int^h_0 g_\alpha(r)\,dr +\cK\bigl(q_n(s)-q_n(s-h)\bigr) \int^{+\infty}_h g_\alpha(r)\,dr\\
        &\le&\dfrac{u_T}{2} +\cK\bigl(q_n(s)-q_n(s-h)\bigr).
    \eeaa
    It follows that
    \be\label{conv:eq-step-bound}
        0\le q_n(s)-q_n(s-h)\le R.
    \ee
    Here $h$ and $R$ are independent of $n$ and $s$.

    Fix $L>0$ and set $N:=\lceil L/h\rceil$.
    For all sufficiently large $n$, the bound $|b_n(s)|\le u_T/4$ holds on $[-Nh,Nh]$. Summing \eqref{conv:eq-step-bound} over consecutive intervals of length $h$, and using $q_n(0)=0$ and monotonicity, we obtain
    \beaa
        \sup_{|s|\le L}|q_n(s)|\le NR.
    \eeaa
    Thus $\{q_n\}$ is locally uniformly bounded.

    \smallskip
    \noindent
    \textbf{Step 2: Passage to the limit.}
    By Helly's selection theorem and a diagonal argument, there exist a subsequence and a non-decreasing function $q:\bR\to\bR$ such that $q_n(s)\to q(s)$ at every continuity point of $q$.
    Fix such a point $s$.
    A monotone function has at most countably many discontinuities;
    hence, for almost every $r>0$,
    \beaa
        q_n(s)-q_n(s-r)\longrightarrow q(s)-q(s-r).
    \eeaa
    Moreover,
    \beaa
        0\le g_\alpha(r)\cK\bigl(q_n(s)-q_n(s-r)\bigr)\le\dfrac{\kappa}{2}g_\alpha(r),
    \eeaa
    and the right-hand side is integrable on $(0,+\infty)$.
    The dominated convergence theorem applied to \eqref{conv:eq-approximate} therefore yields \eqref{conv:eq-limit} at every continuity point $s$ of $q$.

    \smallskip
    \noindent
    \textbf{Step 3: Exclusion of jumps.}
    Suppose that $q$ has a jump at some $s_0\in\bR$, that is,
    \beaa
        q(s^+_0)-q(s^-_0)>0.
    \eeaa
    Approach $s_0$ from the left and from the right through continuity points of $q$.
    For almost every $r>0$, the point $s_0-r$ is also a continuity point of $q$.
    Passing to the limit in \eqref{conv:eq-limit} along these two sequences gives
    \be\label{conv:eq-jump-identities}
        u_T=\int^{+\infty}_{0} g_\alpha(r)
            \cK\bigl(q(s^-_0)-q(s_0-r)\bigr)\,dr =\int^{+\infty}_{0} g_\alpha(r)
            \cK\bigl(q(s^+_0)-q(s_0-r)\bigr)\,dr.
    \ee
    Set $A(r):=q(s^-_0)-q(s_0-r)$.
    Then $A(r)\ge0$ and $A(r)\to0$ as $r\to+0$.
    Note that Condition~\eqref{condi:kernel} yields constants $\eta>0$ and $k_0>0$ such that
    \be\label{conv:eq-local-positivity}
        K(y)\ge k_0\qquad(0\le y\le\eta).
    \ee
    Consequently, there exists $\delta>0$ such that
    $0\le A(r)\le\eta/2$ for $0<r<\delta$.
    By \eqref{conv:eq-local-positivity},
    \beaa
        \cK(A(r))-\cK(A(r)+q(s^+_0)-q(s^-_0)) &=&\int_{A(r)}^{A(r)+q(s^+_0)-q(s^-_0)}K(y)\,dy\\
         &\ge& k_0\min\{q(s^+_0)-q(s^-_0),\eta/2\}>0
        \qquad(0<r<\delta).
    \eeaa
    The same difference is nonnegative for all $r>0$.
    Subtracting the two integral expressions in \eqref{conv:eq-jump-identities},
    we obtain
    \beaa
      0\ge k_0\min\{q(s^+_0)-q(s^-_0),\eta/2\} \int^\delta_0 g_\alpha(r)\,dr>0,
    \eeaa
    a contradiction.
    Thus $q$ is continuous.

    The monotonicity of $q_n$ and the continuity of $q$ now imply local uniform convergence.
    In particular, $q(0)=0$.
    Since every point is now a continuity point, \eqref{conv:eq-limit} holds for all $s\in\bR$.
\end{proof}

\medskip
We next show that every continuous non-decreasing solution of
the limiting threshold equation is linear.

\begin{lemma}\label{lem:entire}
   Let $q:\bR\to\bR$ be a continuous non-decreasing solution of \eqref{conv:eq-limit}.
    Then
    \beaa
        q(s)=q(0)+c^*s\qquad(s\in\bR),
    \eeaa
    where $c^*>0$ is the unique solution of Equation~\eqref{eq:speed}.
\end{lemma}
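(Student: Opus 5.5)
The plan is a rigidity argument. I compare $q$ with the linear solutions $s\mapsto c^*s+\beta$, which solve \eqref{conv:eq-limit} exactly because $G_\alpha(c^*)=u_T$. I pass to limits of translates using Lemma~\ref{lem:compactness} with $b_n\equiv0$, and derive contradictions from the strict monotonicity of $G_\alpha$ (Lemma~\ref{lem:properties-G}). Two observations underlie everything.

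\emph{Step 1 (increments bounded on short lags).} Step~1 of the proof of Lemma~\ref{lem:compactness}, applied with $b\equiv0$, gives $h,R>0$ such that $0\le q(s)-q(s-h)\le R$ for all $s$.

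\emph{Step 2 (one-sided comparison).} If at some point $\tilde s$ a continuous non-decreasing solution $\tilde q$ satisfies $\tilde q(\tilde s)-\tilde q(\tilde s-\rho)\ge p\rho$ for all $\rho>0$, then, since $\cK$ is non-increasing, the equation at $\tilde s$ gives $u_T\le G_\alpha(p)$. The reverse inequality holds symmetrically.

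\emph{Step 3 (the asymptotic slopes equal $c^*$).} Set $A(r):=\sup_s\{q(s)-q(s-r)\}$ and $B(r):=\inf_s\{q(s)-q(s-r)\}$. By Step~1, $A(r)<\infty$. The function $A$ is subadditive and $B$ is superadditive, so $\bar a:=\lim A(r)/r=\inf A(r)/r$ and $\underline b:=\lim B(r)/r=\sup B(r)/r$ exist.

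To show $\bar a\le c^*$, suppose $\bar a>p>c^*$ and put $w(s):=q(s)-ps$. Since $A(r)\ge \bar a r$, the quantity $\sup_s\{w(s)-w(s-r)\}$ tends to $+\infty$. Choose $t_n,r_n$ with $w(t_n)-w(t_n-r_n)\ge n$, and let $s_n$ be a maximizer of $w$ on $[t_n-r_n,t_n]$. By Step~1, the gap $\ell_n:=s_n-(t_n-r_n)$ must tend to infinity, because $w$ can increase by at most $R$ over any interval of length $h$.

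Now apply Lemma~\ref{lem:compactness} to $q_n(s):=q(s+s_n)-q(s_n)$, which is legitimate since the equation is translation invariant. This produces a limit $\tilde q$ with $\tilde q(0)-\tilde q(-\rho)\ge p\rho$ for every $\rho>0$. Step~2 then yields $u_T\le G_\alpha(p)<G_\alpha(c^*)=u_T$, a contradiction. A symmetric argument with minimizers gives $\underline b\ge c^*$. Since $B\le A$, we conclude $\bar a=\underline b=c^*$, and hence $B(r)\le c^*r\le A(r)$.

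\emph{Step 4 (exact linearity; the main obstacle).} It remains to show that $w(s):=q(s)-c^*s$ is constant. My first attempt would be to prove that $w$ is bounded. Suppose instead that $w$ is unbounded above. Then the window-maximizer construction of Step~3 with $p=c^*$ produces a limit $\tilde q$ with $\tilde q(0)-\tilde q(-\rho)\ge c^*\rho$ for all $\rho$. To obtain a contradiction I must also secure a strict inequality on a set of small $\rho$ where $K(\rho)\ge k_0>0$, as in \eqref{conv:eq-local-positivity}. This is the delicate point.

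Once boundedness is available, take $s_n$ with $w(s_n)\to\sup w$ and pass to a limit $\tilde q$ via Lemma~\ref{lem:compactness}. The limit $\tilde w$ attains its maximum at $0$, so $\tilde q(0)-\tilde q(-\rho)\ge c^*\rho$ for every $\rho$. Equality in $u_T=\int g_\alpha(\rho)\cK(\tilde q(0)-\tilde q(-\rho))\,d\rho\le G_\alpha(c^*)$, together with $g_\alpha>0$ and $K>0$ near $0$, forces $\tilde q(0)-\tilde q(-\rho)=c^*\rho$ for small $\rho$. Propagating this rigidity backward in the same way, and then arguing symmetrically with $\inf w$, should give $\sup w=\inf w$. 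This last propagation step is where I expect most of the work to lie.
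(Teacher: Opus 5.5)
\paragraph{Review.} There is a genuine gap in Step~4, and as designed it cannot be closed.

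Your Steps~1--3 are correct. Subadditivity, the window-maximizer construction with Lemma~\ref{lem:compactness}, and the one-sided comparison do give $\lim_{r\to\infty}A(r)/r=\lim_{r\to\infty}B(r)/r=c^*$. But this only identifies the \emph{average} slope; the content of the lemma lies in your Step~4.

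First, the strict inequality you hope for in the unbounded case is never available. The equation at $0$ for the limit $\tilde q$ forces $\tilde q(0)-\tilde q(-\rho)=c^*\rho$ for small $\rho$. More fundamentally, even if $w=q-c^*s$ were known to be bounded, working with $\sup w$ and $\inf w$ cannot succeed. Equation~\eqref{conv:eq-limit} is invariant under $q\mapsto q+\mathrm{const}$, so these extremal values are levels that the equation does not constrain. The rigidity at an approximate maximizer only tells you that the translation limit is linear with slope $c^*$ on $(-\infty,0]$. When the extrema of $w$ are approached only as $s\to\pm\infty$, this gives no contradiction.

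A concrete test case is $q(s)=c^*s+\epsilon\arctan s$, which is non-decreasing and has bounded $w$. It satisfies $B(r)=c^*r\le A(r)$, and every translation limit along $|s_n|\to\infty$ equals $c^*s$, which solves \eqref{conv:eq-limit}. So everything your Steps~1--4 extract is consistent with it, yet $\sup w\neq\inf w$. This $q$ is not a solution, but your Step~4 only uses the equation along such translation limits, so it cannot detect that.

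\paragraph{The missing idea.} Run the extremal argument on the increments $D_\theta(s):=q(s+\theta)-q(s)$, $\theta>0$, instead of on $w$. These are bounded a priori by your Step~1, so no boundedness of $w$ is needed. Their extrema are slopes, which the equation does pin down.

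\begin{itemize}
\item Let $\mu$ be $\sup D_\theta$ or $\inf D_\theta$, and realize it at $0$ in a translation limit $Q$ via Lemma~\ref{lem:compactness}. Then $\Delta(s):=Q(s+\theta)-Q(s)$ satisfies $\Delta(0)=\mu$ and $\Delta\le\mu$ (resp.\ $\Delta\ge\mu$).
\item At any $s_0$ with $\Delta(s_0)=\mu$, put $a_0(r):=Q(s_0)-Q(s_0-r)$ and $a_\theta(r):=Q(s_0+\theta)-Q(s_0+\theta-r)$. Then $a_\theta(r)-a_0(r)=\mu-\Delta(s_0-r)$ has one sign.
\item Subtracting \eqref{conv:eq-limit} at $s_0$ from the same equation at $s_0+\theta$ gives $\int_0^\infty g_\alpha(r)[\cK(a_\theta(r))-\cK(a_0(r))]\,dr=0$ with a one-signed continuous integrand. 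Hence $\cK(a_\theta)=\cK(a_0)$.
\item Strict monotonicity of $\cK$ near $0$ then gives $\Delta=\mu$ on a left neighbourhood of $s_0$. A continuation argument yields $\Delta\equiv\mu$ on $(-\infty,0]$.
\item Consequently $Q(s)-(\mu/\theta)s$ is $\theta$-periodic on $(-\infty,0]$, so it \emph{attains} its maximum and minimum. Your Step~2 applied at those two points gives $G_\alpha(\mu/\theta)=u_T$, i.e.\ $\mu=c^*\theta$.
\end{itemize}

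Doing this for both $\sup D_\theta$ and $\inf D_\theta$ gives $D_\theta\equiv c^*\theta$ for every $\theta>0$, which is the lemma. This is the paper's argument, and with it your Step~3 becomes unnecessary.

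In the $\arctan$ example, the conclusion of this increment argument is already violated at the finite maximizer $s=-\theta/2$ of $D_\theta$. By contrast, all the conclusions of your Step~4 hold there.
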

\begin{proof}
    Fix $\theta>0$ and set
    \beaa
        D_\theta(s):=q(s+\theta)-q(s).
    \eeaa
    The argument leading to \eqref{conv:eq-step-bound}, with zero error, applies at every $s\in\bR$.
    It follows that
    \beaa
        0\le D_\theta(s)\le R\lceil\theta/h\rceil \qquad(s\in\bR).
    \eeaa
    In particular, the quantities
    \beaa
        M_\theta:=\sup_{s\in\bR}D_\theta(s),\qquad m_\theta:=\inf_{s\in\bR}D_\theta(s)
    \eeaa
    are finite.

    \smallskip
    \noindent
    \textbf{Step 1: Realization of an extremal increment.}
    Let $\mu$ be either $M_\theta$ or $m_\theta$.
    Choose a sequence $\{s_n\}\subset\bR$ such that   $D_\theta(s_n)\to \mu$, and define
    \beaa
        Q_n(s):=q(s+s_n)-q(s_n).
    \eeaa
    Equation~\eqref{conv:eq-limit} is invariant under time translations and addition of constants, so $Q_n$ also satisfies this equation and $Q_n(0)=0$.
    Lemma~\ref{lem:compactness}, with $b_n\equiv0$, provides a subsequence such that
    \beaa
      Q_n\longrightarrow Q\qquad\text{in }C_{loc}(\bR),
    \eeaa
    where $Q$ is a continuous non-decreasing solution of \eqref{conv:eq-limit} with $Q(0)=0$.
    Set
    \beaa
        \Delta(s):=Q(s+\theta)-Q(s).
    \eeaa
    Then $\Delta(0)=\mu$ and
    \beaa
    \begin{cases}
        \Delta(s)\le \mu&\text{if }\mu=M_\theta,\\
        \Delta(s)\ge \mu&\text{if }\mu=m_\theta
    \end{cases}
    \qquad(s\in\bR).
    \eeaa

    \smallskip
    \noindent
    \textbf{Step 2: Backward propagation of the extremal value.}
    Suppose that $\Delta(s_0)=\mu$ for some $s_0\in\bR$, and write
    \beaa
        A(r):=Q(s_0)-Q(s_0-r),\qquad B(r):=Q(s_0+\theta)-Q(s_0+\theta-r).
    \eeaa
    Then
    \beaa
        B(r)-A(r)=\mu-\Delta(s_0-r).
    \eeaa
    Subtracting \eqref{conv:eq-limit} at $s=s_0$ from the same equation at $s=s_0+\theta$ yields
    \beaa
        0=\int^{+\infty}_{0} g_\alpha(r)
       \bigl[\cK(B(r))-\cK(A(r))\bigr]\,dr.
    \eeaa
    If $\mu=M_\theta$, then $B(r)\ge A(r)$, so the integrand is nonpositive.
    If $\mu=m_\theta$, then $B(r)\le A(r)$, so the integrand is nonnegative.
    Since $g_\alpha(r)>0$ for $r>0$ and $\cK(B(r))-\cK(A(r))$ is continuous, in either case we obtain
    \beaa
      \cK(B(r))=\cK(A(r))\qquad(r>0).
    \eeaa
    By the continuity and monotonicity of $Q$, both $A(r)$ and $B(r)$ are nonnegative and tend to zero as $r\to+0$.
    They therefore belong to $[0,\eta]$ for all sufficiently small $r>0$.
    The strict monotonicity of $\cK$ on this interval implies $A(r)=B(r)$ for such $r$.
    Consequently, there exists $\delta(s_0)>0$ such that
    \be\label{conv:eq-local-propagation}
        \Delta(s_0-r)=\mu \qquad(0\le r\le\delta(s_0)).
    \ee  

    To extend this conclusion to the whole negative half-line, set
    \beaa
        \ell:=\inf\{a\le0:\ \Delta(s)=\mu\text{ for every }s\in[a,0]\}.
    \eeaa
    Because $\Delta(0)=\mu$, \eqref{conv:eq-local-propagation} shows that the set in this definition contains a negative number.
    If $\ell> -\infty$, continuity gives $\Delta(s)=\mu$ on $[\ell,0]$.
    Applying \eqref{conv:eq-local-propagation} at $s_0=\ell$ extends this interval to the left, contradicting the definition of $\ell$.
    Hence $\ell=-\infty$, and
    \be\label{conv:eq-half-line-increment}
        Q(s+\theta)-Q(s)=\mu\qquad(s\le0).
    \ee
    In particular, no uniform lower bound on $\delta(s_0)$ is needed.

    \smallskip
    \noindent
    \textbf{Step 3: Identification of the extremal value.}
    Set
    \beaa
        d_\theta:=\frac{\mu}{\theta}\ge0, \qquad p(s):=Q(s)-d_\theta s.
    \eeaa
    By \eqref{conv:eq-half-line-increment}, $p$ is $\theta$-periodic on $(-\infty,0]$.
    Let $s_+,s_-\in[-\theta,0]$ be a maximum point and a minimum point of $p$, respectively.
    Periodicity gives, for every $r\ge0$,
    \beaa
        Q(s_+)-Q(s_+-r)\ge d_\theta r,\qquad Q(s_-)-Q(s_--r)\le d_\theta r.
    \eeaa
    Using \eqref{conv:eq-limit} at $s=s_+$ and $s=s_-$, together with the monotonicity of $\cK$, we obtain
    \beaa
        u_T\le\int^{+\infty}_{0} g_\alpha(r)\cK(d_\theta r)\,dr
       =G_\alpha(d_\theta)
       \le u_T.
    \eeaa
    Thus $G_\alpha(d_\theta)=u_T$.
    By Lemma~\ref{lem:properties-G} and Equation~\eqref{eq:speed}, $d_\theta=c^*$, and hence
    $\mu=c^*\theta$.
    Since the argument applies to both $\mu=M_\theta$ and $\mu=m_\theta$,
    \beaa
        M_\theta=m_\theta=c^*\theta.
    \eeaa
    Therefore,
    \beaa
        q(s+\theta)-q(s)=c^*\theta\qquad(s\in\bR).
    \eeaa
    As $\theta>0$ is arbitrary, this proves the assertion.
\end{proof}
    
\medskip
We are now in a position to prove Proposition~\ref{prop:dif}.
\begin{proof}[Proof of Proposition~\ref{prop:dif}]
    Suppose that $\kappa>2u_T$.
    Let $\{t_n\}\subset[0,+\infty)$ be any sequence tending to $+\infty$, and set
    \beaa
        q_n(s):=\overline{x}^{\,*}(t_n+s)-\overline{x}^{\,*}(t_n) \qquad (s\in\bR).
    \eeaa
    The functions $q_n$ are continuous and non-decreasing, and $q_n(0)=0$.
    By \eqref{conv:eq-extended-threshold},
    \beaa
        u_T=b(t_n+s) +\int^{+\infty}_{0} g_\alpha(r) \cK\bigl(q_n(s)-q_n(s-r)\bigr)\,dr.
    \eeaa
    Note that $b(t_n+\cdot)\to0$ locally uniformly on $\bR$.
    By Lemma~\ref{lem:compactness}, we may extract a subsequence that converges locally uniformly to a function $q$.
    The limit $q$ is continuous and non-decreasing, satisfies \eqref{conv:eq-limit}, and has $q(0)=0$.
    Lemma~\ref{lem:entire} then gives $q(s)=c^*s$.

    Since every sequence $t_n\to+\infty$ admits a subsequence with this same limit, for every $L>0$ we have
    \be\label{conv:eq-translate-convergence}
        \lim_{t\to+\infty}\sup_{|s|\le L} \bigl|\overline{x}^{\,*}(t+s)-\overline{x}^{\,*}(t)-c^*s\bigr|=0.
    \ee
    For $t\ge L$, the extension $\overline{x}^{\,*}$ agrees with $x^*$ at all arguments in \eqref{conv:eq-translate-convergence}.
    Taking $s=-r$ therefore yields the assertion of Proposition~\ref{prop:dif}.
\end{proof}

\medskip
We have proved that the local increments of the threshold location are asymptotically linear with slope $c^*$.
This property will play a key role in the convergence analysis in Section~\ref{sec:convergence}.

\bigskip
\section{Construction and uniqueness of asymptotic traveling waves}\label{sec:atw}

\medskip
In this section, we construct the profile of asymptotic traveling waves. 
We first establish the basic properties of the profile $\phi(\,\cdot\,;c)$ defined by \eqref{phi}. 
We then show that uniqueness may fail
in the zero-speed case. 
Finally, a strong maximum principle is used to prove uniqueness of both the speed and the profile when the speed is nonzero.

We begin with the linear equation satisfied by the profile.

\begin{lemma}\label{lem:phi}
    For any $c\in\bR$, the function $\phi(\,\cdot\,;c)$ defined by \eqref{phi} belongs to $BC^1(\bR)$ and is non-increasing. 
    Moreover, it satisfies
    \be\label{sol:eq}
        D_c^\alpha\phi(z;c)+\phi(z;c)=\cK(z)\quad(z\in\bR)
    \ee
    and
    \be\label{sol:boundary}
        \lim_{z\to-\infty}\phi(z;c)=\kappa,\qquad
        \lim_{z\to+\infty}\phi(z;c)=0.
    \ee
    In addition,
    \be\label{sol:strict}
        \phi(z;c)>\phi(0;c)\quad (z<0),\qquad
        \phi(z;c)<\phi(0;c)\quad (z>0).
    \ee
    The following symmetry relation also holds:
    \be\label{sol:symmetry}
        \phi(z;c)=\kappa-\phi(-z;-c)\quad(z,c\in\bR).
    \ee
\end{lemma}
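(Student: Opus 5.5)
Regularity, monotonicity, the limits \eqref{sol:boundary}, strict monotonicity and the symmetry \eqref{sol:symmetry} all come straight from the representation \eqref{phi} and dominated convergence. The only real work is the identity \eqref{sol:eq}, which I will prove by Fubini together with Lemma~\ref{lem:ML}(iii).

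\emph{Regularity and monotonicity.} Since $\cK$ is Lipschitz with $\cK'=-K$ bounded, and $g_\alpha\in L^1(0,\infty)$ with total mass $1$, dominated convergence allows differentiation under the integral:
\[
\phi'(z;c)=-\int_0^\infty g_\alpha(r)K(z+cr)\,dr\le 0 .
\]
This derivative is continuous (again by dominated convergence, using the continuity of $K$) and bounded by $\|K\|_{L^\infty(\bR)}$. Hence $\phi(\cdot;c)\in BC^1(\bR)$, $0\le\phi\le\kappa$, and $\phi$ is non-increasing.

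\emph{Limits and strictness.} The limits \eqref{sol:boundary} follow from $\cK(\mp\infty)$ and $\int_0^\infty g_\alpha=1$, by dominated convergence. For \eqref{sol:strict}, write
\[
\phi(0;c)-\phi(z;c)=\int_0^\infty g_\alpha(r)\int_{cr}^{z+cr}K(y)\,dy\,dr
\]
for $z>0$. Since $K\ge k_0>0$ on $[-\eta,\eta]$ and $g_\alpha>0$, the inner integral is positive for all small $r$, because $cr\to0$ and $(cr,z+cr)$ then meets $(-\eta,\eta)$. The case $z<0$ is symmetric.

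\emph{Symmetry.} Evenness of $K$ gives $\cK(-x)=\kappa-\cK(x)$. Therefore
\[
\phi(-z;-c)=\int_0^\infty g_\alpha(r)\cK(-(z+cr))\,dr=\kappa-\phi(z;c).
\]

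\emph{The identity \eqref{sol:eq}.} If $c=0$, then $\phi(z;0)=\cK(z)$ and $D^\alpha_0\equiv0$, so there is nothing to prove. For $c>0$, I substitute the formula for $\phi'$ into $D^\alpha_c$:
\[
D^\alpha_c\phi(z)=\frac{c^\alpha}{\Gamma(1-\alpha)}\int_0^\infty s^{-\alpha}\int_0^\infty g_\alpha(r)K(z+s+cr)\,dr\,ds .
\]
Next I rescale $s=c\sigma$, then set $\tau=\sigma+r$, so that $K(z+c\tau)$ depends only on $\tau$. The factors $c^\alpha$ and $c^{1-\alpha}$ combine, and Fubini (everything is nonnegative) gives
\[
D^\alpha_c\phi(z)=c\int_0^\infty K(z+c\tau)\,\frac{1}{\Gamma(1-\alpha)}\int_0^\tau(\tau-r)^{-\alpha}g_\alpha(r)\,dr\,d\tau
=c\int_0^\infty K(z+c\tau)f_\alpha(\tau)\,d\tau ,
\]
where the last step is Lemma~\ref{lem:ML}(iii).

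On the other hand, integrating \eqref{phi} by parts, using $g_\alpha=-f_\alpha'$, $f_\alpha(0)=1$ and $f_\alpha(\infty)=0$, gives
\[
\phi(z;c)=\cK(z)-c\int_0^\infty f_\alpha(\tau)K(z+c\tau)\,d\tau .
\]
This integral converges because $f_\alpha\le1$ and $\int_0^\infty K(z+c\tau)\,d\tau\le\kappa/c$. The boundary term at $0$ needs care since $g_\alpha$ is singular there, but $f_\alpha$ is continuous at $0$, so the term is harmless; this mirrors the representation of $G_\alpha$ in Lemma~\ref{lem:properties-G}. Adding the two displays yields \eqref{sol:eq}.

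For $c<0$, the analogous computation applies with $\phi'(z-s)$. Alternatively, I apply the $c>0$ case to $\psi(z):=\kappa-\phi(-z;c)=\phi(z;-c)$, using \eqref{sol:symmetry}. Since $\psi'(w)=\phi'(-w;c)$, one checks $D^\alpha_{-c}\psi(-z)=-D^\alpha_c\phi(z)$, and $\cK(-z)=\kappa-\cK(z)$ then transfers the identity.

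The main obstacle is the Fubini/change-of-variables step. Justifying absolute convergence is easy because the integrands are nonnegative. The real care lies in getting the scaling constants exactly right so that the inner integral is precisely the one in Lemma~\ref{lem:ML}(iii).
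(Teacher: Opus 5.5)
Your proposal is correct and follows essentially the same route as the paper: differentiate under the integral to get $\phi'$, integrate \eqref{phi} by parts using $g_\alpha=-f_\alpha'$ to obtain $\phi=\cK-c\int_0^\infty f_\alpha(r)K(z+cr)\,dr$, and compute $D^\alpha_c\phi$ by Tonelli, the rescaling $s=c\rho$, and Lemma~\ref{lem:ML}(iii). The only difference is cosmetic: you reduce the case $c<0$ to $c>0$ using the already-proved symmetry \eqref{sol:symmetry}, and your check $D^\alpha_{-c}\psi(-z)=-D^\alpha_c\phi(z)$ is correct, whereas the paper simply says this case follows in the same way.
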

\begin{proof}
    When $c=0$, we have $\phi(z;0)=\cK(z)$.
    Since $D_0^\alpha\phi\equiv0$, all the assertions follow immediately from the properties of $\cK$.
    We therefore assume that $c\neq0$.
    
    Let $\phi(z)=\phi(z;c)$ be a function defined in \eqref{phi}.
    Since $g_{\alpha}$ is positive and $\int^{+\infty}_{0} g_\alpha(r)\,dr=1$,
    the dominated convergence theorem yields \eqref{sol:boundary}.
    Moreover, since $\cK'=-K$, differentiation under the integral sign gives
    \beaa
        \phi'(z) = -\int^{+\infty}_{0} g_\alpha(r)K(z+cr)\,dr\leq0.
    \eeaa
    Thus $\phi$ is non-increasing.

    We next prove \eqref{sol:strict}. 
    For $z<0$,
    \beaa
        \phi(z;c)-\phi(0;c)
        &=& \int^{+\infty}_{0} g_\alpha(r)\left\{\cK(z+cr)-\cK(cr)\right\}dr\\
        &=& \int^{+\infty}_{0} g_\alpha(r) \int_{z+cr}^{cr}K(y)\,dy\,dr \geq0.
    \eeaa
    Since $K(0)>0$ by \eqref{condi:kernel},  
    the inner integral is strictly positive for all $r$ in a sufficiently small interval
    of positive length. 
    Hence, we conclude that
    \beaa
        \phi(z;c)>\phi(0;c)\qquad(z<0).
    \eeaa
    The inequality for $z>0$ follows in the same way.

    We now prove \eqref{sol:eq}. 
    We first consider $c>0$.
    Since $g_\alpha(r)=-f_\alpha'(r)$,
    integration by parts gives
    \bea
        \phi(z)&=& -\int^{+\infty}_{0} f_\alpha'(r)\cK(z+cr)\,dr\nonumber\\
        &=& \cK(z) - c\int^{+\infty}_{0} f_\alpha(r)K(z+cr)\,dr.
    \label{eq:phi-parts-positive}
    \eea
    On the other hand, 
    \beaa
    -\phi'(z+s) = \int^{+\infty}_{0} g_\alpha(q)K(z+s+c q)\,dq.
    \eeaa
    Hence, by Tonelli's theorem and the change of variables $s=c\rho$,
    \beaa
    D_c^\alpha\phi(z)
    &=& \dfrac{c}{\Gamma(1-\alpha)} \int^{+\infty}_{0}\int^{+\infty}_{0} \rho^{-\alpha}g_\alpha(q) K(z+c(\rho+q))\,dq\,d\rho \\
    &=& c\int^{+\infty}_0 K(z+cr) \left\{ \dfrac{1}{\Gamma(1-\alpha)} \int^{r}_0 (r-q)^{-\alpha}g_\alpha(q)\,dq \right\}dr \\
    &=& c\int^{+\infty}_0 f_{\alpha}(r) K(z+cr)dr,
    \eeaa
    where the last identity follows from Lemma~\ref{lem:ML}~(iii).
    By using \eqref{eq:phi-parts-positive}, we have
    \beaa
        D_c^\alpha\phi(z)=\cK(z)-\phi(z).
    \eeaa
    The case for $c<0$ follows in the same way.
    Thus, \eqref{sol:eq} holds for every $c\neq0$.

    Finally, we have
    \beaa
        \cK(x)+\cK(-x)=\kappa.
    \eeaa
    from the fact that $K$ is even.
    Therefore,
    \beaa
        \kappa-\phi(-z;-c)
        &=& \int^{+\infty}_{0} g_\alpha(r) \left\{\kappa-\cK(-z-cr) \right\}dr \\
        &=& \int^{+\infty}_{0} g_\alpha(r)\cK(z+cr)\,dr \\
        &=& \phi(z;c),
    \eeaa
    which proves \eqref{sol:symmetry}.
\end{proof}

\medskip
The next example shows that uniqueness may fail when the speed is zero.

\begin{example}\label{exam:non-unique}
    Suppose that $K$ satisfies \eqref{condi:kernel} and
    \beaa
        \operatorname{supp}K=[-1,1].
    \eeaa
    Assume further that
    \beaa
        2u_T=\kappa.
    \eeaa
    When $c=0$, the profile equation reduces to the
    stationary equation
    \be\label{eq:sta}
        \phi(x) = \int_{\bR} K(x-y)H\bigl(\phi(y)-u_T\bigr)\,dy.
    \ee
    The function
    \beaa
        \phi_1(x):=\cK(x)
    \eeaa
    is a solution of \eqref{eq:sta} satisfying \eqref{boundary}. 
    Indeed,
    \beaa
        \phi_1(x)>u_T\quad (x<0),\qquad
        \phi_1(x)<u_T\quad (x>0).
    \eeaa

    Since $\operatorname{supp}K=[-1,1]$, we have
    \beaa
        \cK(x)=
            \begin{cases}
                0, & x\geq1,\\
                \kappa, & x\leq-1.
            \end{cases}
    \eeaa
    Choose $l_1,l_2\in\bR$ such that
    \beaa
        l_1>2,\qquad l_2-l_1>2,
    \eeaa
    and define
    \beaa
        \phi_2(x) := \cK(x)-\cK(x-l_1)+\cK(x-l_2).
    \eeaa
    Then $\phi_2$ satisfies \eqref{boundary} and
    \beaa
        \phi_2(x)
            \begin{cases}
                >u_T, &x\in(-\infty,0)\cup(l_1,l_2),\\
                =u_T, &x\in\{0,l_1,l_2\},\\
                <u_T, &x\in(0,l_1)\cup(l_2,+\infty).
            \end{cases}
    \eeaa
    Consequently,
    \beaa
        \{\phi_2>u_T\} = (-\infty,0)\cup(l_1,l_2).
    \eeaa
    It follows that
    \beaa
        \int_{\bR}
        K(x-y)H\bigl(\phi_2(y)-u_T\bigr)\,dy 
        &=& \int^{0}_{-\infty} K(x-y)\,dy + \int^{l_2}_{l_1}K(x-y)\,dy \\
        &=& \cK(x)-\cK(x-l_1)+\cK(x-l_2) = \phi_2(x).
    \eeaa
    Thus $\phi_2$ is another solution of \eqref{eq:sta}.
    In particular, the stationary wave profile is not unique in general when $c=0$.
\end{example}

\medskip
To prove uniqueness in the nonzero-speed case, we use the following strong maximum principle.

\begin{lemma}\label{lem:com}
    Let $c\neq 0$.
    Suppose that $v\in BC^1(\bR)$ satisfies 
    \beaa
         \liminf_{x\to\pm\infty} v(x)\ge 0
    \eeaa     
    and
    \beaa
        D^{\alpha}_{c} v + v \ge 0\ \mathrm{on}\ \bR.
    \eeaa
    Then, $v\ge 0$ on $\bR$. 
    Moreover, if there exists $z^*\in\bR$ such that $v(z^*)=0$, then
    \beaa
        v(z)=0\quad (z\geq z^*)
    \eeaa
    when $c>0$, whereas
    \beaa
        v(z)=0\quad(z\leq z^*)
    \eeaa
    when $c<0$.
\end{lemma}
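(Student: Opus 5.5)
The plan is to rewrite $D^\alpha_c$ in a Marchaud-type difference form, which turns the statement into an elementary minimum-point argument. Take $c>0$ first. For fixed $z$, integrate by parts in $\int_0^{+\infty} v'(z+s)s^{-\alpha}\,ds$, using $\frac{d}{ds}\bigl(v(z+s)-v(z)\bigr)=v'(z+s)$. This gives
\begin{equation*}
\int_0^{+\infty}\frac{v'(z+s)}{s^{\alpha}}\,ds=\Bigl[\frac{v(z+s)-v(z)}{s^{\alpha}}\Bigr]_{s=0}^{s=+\infty}+\alpha\int_0^{+\infty}\frac{v(z+s)-v(z)}{s^{1+\alpha}}\,ds .
\end{equation*}
Both boundary terms vanish. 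At $s=0$ we have $|v(z+s)-v(z)|\le \|v'\|_{L^\infty}s$ because $v\in BC^1(\bR)$. At $s=+\infty$ the numerator is bounded and $s^{-\alpha}\to0$. The same two bounds show that the last integral converges absolutely. Hence
\begin{equation*}
D^\alpha_c v(z)=-\frac{\alpha c^{\alpha}}{\Gamma(1-\alpha)}\int_0^{+\infty}\frac{v(z+s)-v(z)}{s^{1+\alpha}}\,ds\qquad(c>0).
\end{equation*}
Justifying this representation rigorously (truncating at $[\eps,M]$ and letting $\eps\to0$, $M\to\infty$) is the only technical point. I expect it to be routine given the $BC^1$ assumption.

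For nonnegativity, suppose $\inf_{\bR} v<0$. Since $\liminf_{x\to\pm\infty}v(x)\ge0$, the infimum is taken over an effectively compact set. So it is attained at some $z_0$ with $v(z_0)<0$. At $z_0$ we have $v(z_0+s)-v(z_0)\ge0$ for all $s$, so $D^\alpha_c v(z_0)\le 0$. Then $D^\alpha_c v(z_0)+v(z_0)\le v(z_0)<0$, which contradicts the hypothesis. Thus $v\ge0$ on $\bR$.

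For the strong part, let $v(z^*)=0$. Then $z^*$ is a global minimum of $v$, and the hypothesis gives
\begin{equation*}
0\le D^\alpha_c v(z^*)+v(z^*)=-\frac{\alpha c^{\alpha}}{\Gamma(1-\alpha)}\int_0^{+\infty}\frac{v(z^*+s)}{s^{1+\alpha}}\,ds\le0 .
\end{equation*}
The integral of the nonnegative continuous function $v(z^*+s)s^{-1-\alpha}$ therefore vanishes. Hence $v(z^*+s)=0$ for every $s>0$, that is, $v(z)=0$ for $z\ge z^*$.

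For $c<0$, I would reduce to the first case by reflection. Set $w(z):=v(-z)$; then $w'(z)=-v'(-z)$, and the change of variables gives $D^\alpha_c v(-z)=D^\alpha_{|c|}w(z)$. So $w$ satisfies the hypotheses with speed $|c|>0$, and $w\ge0$ follows. Moreover, $v(z^*)=0$ means $w(-z^*)=0$, which forces $w=0$ on $[-z^*,\infty)$, i.e.\ $v=0$ on $(-\infty,z^*]$. Alternatively, one can integrate by parts directly in the $c<0$ formula to obtain the mirrored Marchaud form with $v(z-s)-v(z)$ and repeat the argument verbatim.
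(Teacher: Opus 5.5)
Your proposal is correct and follows essentially the same route as the paper: the Marchaud-type integration by parts, a minimum-point contradiction for nonnegativity, and, at a zero $z^*$ of $v$, the vanishing of the nonnegative continuous integrand $v(z^*+s)s^{-1-\alpha}$. The differences are minor. You handle $c<0$ by the reflection $w(z)=v(-z)$ instead of the paper's single formula with $v(z+\operatorname{sgn}(c)s)$. You also spell out two points the paper leaves implicit: that the minimum is attained, and that the defining integral converges as an improper integral via truncation.
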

\begin{proof}
    Integration by parts gives 
    \be\label{eq:marchaud}
    D_c^\alpha v(z)
    =
    \frac{\alpha|c|^\alpha}{\Gamma(1-\alpha)}
    \int^{+\infty}_{0}
        \dfrac{v(z)-v\bigl(z+\operatorname{sgn}(c)s\bigr)}{s^{1+\alpha}}\,ds.
    \ee
    The boundary terms vanish because $v$ and $v'$ are bounded and continuous.

    Suppose that $v$ takes a negative value.
    From the assumption for $v$, there exists $z_m\in\bR$ such that
    \beaa
        v(z_m) = \min_{z\in\bR} v(z) <0.
    \eeaa
    Then, we have
    \beaa
        v(z_m)-v\bigl(z_m+\operatorname{sgn}(c)s\bigr)\leq0\qquad(s>0).
    \eeaa
    Thus \eqref{eq:marchaud} gives
    \beaa
        D_c^\alpha v(z_m)\leq0.
    \eeaa
    Consequently,
    \beaa
        D_c^\alpha v(z_m)+v(z_m)<0,
    \eeaa
    which contradicts the assumed differential inequality. 
    Therefore, $v$ is non-negative.
    
    In addition, if there exists $z^*\in\bR$ such that $v(z^*)=0$ then 
    \beaa
         D^{\alpha}_{c} v (z^*) = 0
    \eeaa
    holds from \eqref{eq:marchaud}.
    The integrand in \eqref{eq:marchaud} is
    nonpositive and continuous, and therefore it vanishes identically. 
    Thus
    \beaa
        v\bigl(z^*+\operatorname{sgn}(c)s\bigr)=0
    \eeaa
    for $s>0$.

    The proof is complete.
\end{proof}

\medskip
We are now ready to prove the existence and uniqueness of the
asymptotic traveling wave.

\begin{proof}[Proof of Theorem~\ref{thm:atw}]
    Let $c^*$ be the unique solution of \eqref{eq:speed}. 
    By the definition of $c^*$,
    \beaa
        \phi(0;c^*) = G_{\alpha}(c^*)=u_T.
    \eeaa
    It follows from \eqref{sol:strict} that
    \beaa
        \phi(z;c^*)>u_T\quad (z<0),\qquad
        \phi(z;c^*)<u_T\quad (z>0).
    \eeaa
    Hence $H\bigl(\phi(y;c^*)-u_T\bigr) = H(-y)$ for almost every $y\in\bR$. 
    Therefore,
    \beaa
        \int_{\bR} K(z-y) H\bigl(\phi(y;c^*)-u_T\bigr)\,dy
        = \int^{0}_{-\infty}K(z-y)\,dy 
        = \cK(z).
    \eeaa
    Combining this identity with Lemma~\ref{lem:phi}, we find that 
    $(c^*,\phi(\,\cdot\,;c^*))$ is an asymptotic traveling wave.
    The limits at both infinities, monotonicity, and symmetry relation follow from Lemma~\ref{lem:phi}.

    We next prove uniqueness under the assumption $\kappa\neq2u_T$.
    Let $(c,\psi)$ be an asymptotic traveling wave of Equation~\eqref{eq:main} satisfying \eqref{boundary}.
    Define
    \beaa
        z_{l} &:=& \sup \{ z^*\in\bR\ \mid \ \psi(z)>u_T \ (\forall z<z^*) \}, \\
        z_{r} &:=& \inf \{ z^*\in\bR\ \mid \ \psi(z)<u_T \ (\forall z>z^*) \}.
    \eeaa
    From the continuity of $\psi$, we have $\psi(z_{l}) = \psi(z_r) = u_T$.

    We first show that $c\neq0$.
    If $c=0$, then $\psi$ satisfies
    \beaa
        \psi(z) = \int_{\bR} K(z-y)H(\psi(y)-u_T)\, dy.
    \eeaa
    In particular,
    \beaa
        u_T= \psi(z_l) \geq \phi(0;c) = G_\alpha(c)
    \eeaa
    holds at $z=z_l$. 
    On the other hand, we find
    \beaa
        u_T= \psi(z_r) \leq \phi(0;c) = G_\alpha(c)
    \eeaa
    at $z=z_r$.
    This means $\kappa=2u_T$, which is a contradiction.
    Hence, $c\neq0$.

    Next, let us show the uniqueness of the speed.
    Set $v_{l}(z) := \psi(z) - \phi(z-z_l;c)$.
    From Lemma~\ref{lem:phi}, we get
    \be\label{eq:vl}
        D^{\alpha}_{c} v_l(z) + v_l(z) =\int_{\bR} K(z-y) \{ H(\psi(y) - u_T) - H(z_l-y) \} dy \ge 0
    \ee
    and $v_l(\pm \infty) = 0$.
    Thus, we obtain
    \beaa
        \psi(z) \ge \phi(z-z_l;c)\quad (z\in\bR)
    \eeaa
    from Lemma~\ref{lem:com}.
    In particular, 
    \beaa
        u_T = \psi(z_l)\ge \phi(0;c) =  \int^{+\infty}_{0} g_{\alpha}(r)\cK(c r) dr
    \eeaa
    holds at $z=z_l$.

    Similarly, applying Lemma 4.3 to $v_{r}(z) := \phi(z-z_r;c) - \psi(z)$, we obtain
    \beaa
        \phi(z-z_r;c) \ge \psi(z) \quad (z\in\bR)
    \eeaa
    and
    \beaa
        \int^{+\infty}_{0} g_{\alpha}(r)\cK(c r) dr =  \phi(0;c) \ge u_T.
    \eeaa
    Hence, $c$ is a solution of Equation~\eqref{eq:speed}, which means $c=c^*$.

    Finally, we show the uniqueness of the profile.
    Since the argument is analogous, we only consider the case $c^*>0$.
    We now get
    \beaa
        v_l(z_l) = \psi(z_l) - \phi(0;c^*) = 0.
    \eeaa
    From Lemma~\ref{lem:com}, we have
    \beaa
        \psi(z) = \phi(z-z_l;c^*)\quad (z\ge z_l).
    \eeaa
    Since $\psi(z)>u_T\ (z<z_l)$, \eqref{eq:vl} becomes
    \beaa
        D^{\alpha}_{c} v_l + v_l = 0\quad (z\in\bR).
    \eeaa
    Thus, by applying Lemma~\ref{lem:com} for $v_l$ and $-v_l$,
    we obtain
    \beaa
        \psi(z) = \phi(z-z_l;c^*)\quad (z\in \bR).
    \eeaa

    The proof is complete.
\end{proof}

\medskip
We have constructed an asymptotic traveling wave with speed $c^*$ and
established its uniqueness up to spatial translation when $c^*\neq0$.
The next section is devoted to the convergence of the global front solution to $\phi(\,\cdot\,;c^*)$.

\bigskip
\section{Convergence to asymptotic traveling waves}\label{sec:convergence}

In this section, we prove that the global front solution constructed in Section~\ref{sec:global} converges to the asymptotic traveling wave.
We then present a class of initial data for which the threshold location is exactly linear and an explicit convergence rate is available.

\begin{proof}[Proof of Theorem~2.5]
    We first consider the case $\kappa=2u_T$.
    Note that $\phi(z;0)=\cK(z)$.
    Then, 
    \beaa
        u(t,x) = f_{\alpha}(t) u_0(x) + (1- f_{\alpha}(t))\cK(x)
    \eeaa
    is the front solution to \eqref{eq:main} with \eqref{condi:init}, whose threshold location is $x^*(t)\equiv 0$.
    Then, we have
    \beaa
        |u(t,x^*(t)+z) - \phi(z;0)| &=& f_{\alpha}(t) |u_0(z) - \cK(z)| \le 2\kappa f_{\alpha}(t). 
    \eeaa
    Therefore,
    \beaa
        \dlim_{t\to+\infty} \sup_{z\in\bR}|u(t,x^*(t)+z) - \phi(z;0)| = 0
    \eeaa
    holds. 
    In the case $\kappa=2u_T$, the desired assertion is obtained.

    Next, we consider the case $\kappa\neq 2u_T$.
    From Theorem~\ref{thm:global}, there exists a unique front solution $u(t,x)$ with the threshold location $x^*(t)$.
    By \eqref{condi:kernel}, we obtain
    \beaa
        | \cK(z+x^*(t)-x^*(t-r)) - \cK(z+c^* r)| \le \|K\|_{L^\infty(\bR)} |x^*(t)-x^*(t-r) -c^* r|.
    \eeaa
    Since $u(t,x)$ is represented by \eqref{sol:mild},
    for any $L$, we have 
    \beaa
        |u(t,x^*(t)+z)-\phi(z;c^*)| 
        &\le& f_{\alpha}(t) u_0(x^*(t)+z)\\
        &&\quad + \left[ \int^{L}_{0} + \int^{t}_{L} \right] g_{\alpha}(r) | \cK(z+x^*(t)-x^*(t-r)) - \cK(z+c^* r)|\,dr \\
        &&\quad\quad + \int^{+\infty}_{t} g_{\alpha}(r) \cK(z+c^* r)dr \\
        &\le& \kappa f_{\alpha}(t) +  \|K\|_{L^\infty(\bR)} \sup_{0\le r\le L} |x^*(t)-x^*(t-r) -c^* r|   \\
        &&\quad +2\kappa \int^{+\infty}_{L}  g_{\alpha}(r)\,dr  
    \eeaa
    for every $t>L$.
    By taking the limit as $t\to+\infty$,
    we get
    \beaa
        \limsup_{t\to+\infty} \sup_{z\in\bR} |u(t,x^*(t)+z)-\phi(z;c^*)| \le 2\kappa \int^{+\infty}_{L}  g_{\alpha}(r)\,dr
    \eeaa
    from Proposition~\ref{prop:dif}.
    Since $L$ is arbitrary, we obtain the desired assertion.
\end{proof}
\begin{remark}\label{rem:balanced-rate}
    We note that the convergence rate is obtained in the balanced case $\kappa=2u_T$.
    In this case, the front solution $u(t,x)$ in Theorem~\ref{thm:converge} satisfies $x^*(t)\equiv0$, and the limiting profile is $\phi(\cdot;0)=\cK$.
    As shown in the proof,
    \beaa
        u(t,x)-\cK(x)
        =f_\alpha(t)\bigl(u_0(x)-\cK(x)\bigr)
        \qquad (t\ge0,\ x\in\bR).
    \eeaa
    Consequently,
    \beaa
        \left\|u(t,\cdot)-\cK\right\|_{L^\infty(\bR)}
        = f_\alpha(t) \left\|u_0-\cK\right\|_{L^\infty(\bR)}.
    \eeaa
    By Lemma~\ref{lem:ML}~(i), this yields
    \beaa
        \dlim_{t\to+\infty}t^\alpha \left\|u(t,\cdot)-\cK\right\|_{L^\infty(\bR)} = \dfrac{\left\|u_0-\cK\right\|_{L^\infty(\bR)}}{\Gamma(1-\alpha)}.
    \eeaa
    Thus, the algebraic rate $t^{-\alpha}$ is exact whenever
    $u_0\not\equiv\cK$.
    If $u_0\equiv\cK$, the solution is stationary.
    This conclusion holds for every initial datum satisfying
    \eqref{condi:init}, without any additional decay assumption
    on the connectivity kernel.
\end{remark}

\medskip
We next give a class of front-like initial data for which the threshold location moves exactly with the non-zero asymptotic speed.
\begin{proposition}\label{prop:ct}
    Assume that Conditions \eqref{condi:kernel} and \eqref{condi:bistable} hold.
    Suppose that $\kappa>2u_T$.
    Let $u_0$ be a bounded, continuous and non-increasing function satisfying
    \be\label{init:right}
        u_0(x) = \dfrac{\dint^{+\infty}_{x/c^*} g_{\alpha}(r) \cK(c^* r)dr}{f_{\alpha}(x/c^*)}\quad (x\ge 0)
    \ee
    and
    \be\label{init:left}
        u_0(x)>u_T\quad (x<0).
    \ee
    Then, there exists a front solution $u(t,x)$ with the threshold location $x^*(t)=c^*t$, where $c^*$ is the unique solution of \eqref{eq:speed}.
    Moreover, it satisfies
    \beaa
        \sup_{z\in\bR} |u(t,x^*(t)+z) - \phi(z;c^*)| \le (\|u_0\|_{L^\infty}+\kappa) f_{\alpha}(t).
    \eeaa
\end{proposition}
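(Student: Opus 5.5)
The plan is to verify directly that $\xi(t)=c^*t$ satisfies the threshold equation, build the solution from it, and then compare with $\phi(\cdot;c^*)$. Since $u_0$ is only continuous here, I would not route the argument through Proposition~\ref{prop:exi-uni}. Instead I define
\[
u(t,x):=U(t,x;\xi), \qquad \xi(t)=c^*t,
\]
which is a mild solution by Lemma~\ref{lem:rep} \emph{provided} $\{u(t,\cdot)>u_T\}=(-\infty,c^*t)$. This set condition is exactly what has to be checked.

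\textbf{Threshold identity.} At $x=c^*t$, the substitution $r=t-s$ gives
\[
U(t,c^*t;\xi)=f_\alpha(t)u_0(c^*t)+\int_0^t g_\alpha(r)\cK(c^*r)\,dr .
\]
By \eqref{init:right} at $x=c^*t\ge 0$, the first term equals $\int_t^\infty g_\alpha(r)\cK(c^*r)\,dr$. Hence $U(t,c^*t;\xi)=G_\alpha(c^*)=u_T$ by \eqref{eq:speed}.

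\textbf{Sign condition.} Next I show $u>u_T$ for $x<c^*t$ and $u<u_T$ for $x>c^*t$. Write
\[
u(t,x)=f_\alpha(t)u_0(x)+\int_0^t g_\alpha(r)\cK(x-c^*t+c^*r)\,dr .
\]
Both terms are non-increasing in $x$, so $u(t,\cdot)$ is non-increasing. Strictness comes from the integral term. Since $K(0)>0$, $\cK$ is strictly decreasing near $0$, and the argument $x-c^*t+c^*r$ passes through a neighbourhood of $0$ for $r$ near $t-x/c^*$ when $0<x<c^*t$. For $x$ outside $[0,c^*t]$ I use the $u_0$ term instead: $u_0>u_T$ on $x<0$ by \eqref{init:left}, and for $x>c^*t$ I compare directly using the representation. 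Concretely, for $x>c^*t$ I write $x=c^*t+y$ with $y>0$, expand $u_0(x)$ via \eqref{init:right}, and show that $u(t,x)<u_T$ reduces to
\[
\int_t^\infty g_\alpha(r)\cK(c^*r)\,dr\cdot\frac{f_\alpha(t)}{f_\alpha(t+y/c^*)}+\int_0^t g_\alpha(r)\cK(y+c^*r)\,dr<u_T.
\]
A cleaner route is to use the formula \eqref{init:right} to rewrite
\[
f_\alpha(t)u_0(x)=\frac{f_\alpha(t)}{f_\alpha(x/c^*)}\int_{x/c^*}^\infty g_\alpha\cK(c^*r)\,dr,
\]
and then compare with the case $t=x/c^*$. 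This strict monotonicity step, which handles the non-smooth $u_0$ and the region $x>c^*t$, is where I expect the main care to be needed. If it becomes too delicate, I will at least obtain weak monotonicity plus strictness near $x=c^*t$ from $K(0)>0$. That already suffices for \eqref{sol:front} once combined with \eqref{init:left} and the fact that $u<u_T$ holds for $x>c^*t$ by the identity at $t'=x/c^*$ together with monotonicity in time of the ratio $f_\alpha(t)/f_\alpha(x/c^*)\le 1$.

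\textbf{Convergence estimate.} With $x^*(t)=c^*t$,
\[
u(t,c^*t+z)-\phi(z;c^*)=f_\alpha(t)u_0(c^*t+z)-\int_t^\infty g_\alpha(r)\cK(z+c^*r)\,dr .
\]
The first term is at most $\|u_0\|_{L^\infty}f_\alpha(t)$. The second is at most $\kappa\int_t^\infty g_\alpha=\kappa f_\alpha(t)$ by \eqref{eq:g2f}. Together these give the bound $(\|u_0\|_{L^\infty}+\kappa)f_\alpha(t)$.
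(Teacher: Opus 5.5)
Your overall route is the paper's: take $u=U(\cdot,\cdot;\xi)$ with $\xi(t)=c^*t$, get $u(t,c^*t)=u_T$ from \eqref{init:right} and \eqref{eq:speed}, check the sign condition so the Heaviside term collapses to $\cK(x-c^*s)$, and bound $f_\alpha(t)u_0(c^*t+z)-\int_t^\infty g_\alpha(r)\cK(z+c^*r)\,dr$ term by term. The threshold identity and the final estimate are correct.

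The sign step, however, fails as written for $x>c^*t$. There $x/c^*>t$, and $f_\alpha$ is strictly decreasing ($f_\alpha'=-g_\alpha<0$). So $f_\alpha(t)/f_\alpha(x/c^*)>1$, not $\le 1$, and the comparison with the identity at $t'=x/c^*$ goes the wrong way: it does not give $u(t,x)<u_T$. The reduced inequality you display should also have lower limit $t+y/c^*$, not $t$. You justify strictness only for $0<x<c^*t$, and your fallback explicitly relies on this broken argument for $x>c^*t$. So that region is not covered.

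The repair is the comparison the paper uses. For $t>0$,
\[
u(t,c^*t+z)-u_T=f_\alpha(t)\{u_0(c^*t+z)-u_0(c^*t)\}+\int_0^t g_\alpha(r)\{\cK(z+c^*r)-\cK(c^*r)\}\,dr .
\]
The first term is $\le 0$ for $z>0$ and $\ge 0$ for $z<0$, because $u_0$ is non-increasing. Pick $\eta>0$ with $K>0$ on $[-\eta,\eta]$. For $0<r<\min\{t,\eta/c^*\}$ the point $c^*r$ lies in $(0,\eta)$. Hence $\cK(z+c^*r)-\cK(c^*r)$ is strictly negative for $z>0$ and strictly positive for $z<0$. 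So strictness comes from $r$ near $0$, not from $r$ near $t-x/c^*$, and it holds on both sides of $c^*t$. In fact \eqref{init:left} is not needed when $t>0$.

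You also do not address $t=0$, which \eqref{sol:front} requires. There you need $u_0(x)<u_T$ for $x>0$, and this does not follow from monotonicity and $u_0(0)=u_T$ alone. From \eqref{init:right}, with $\tau=x/c^*$,
\[
u_T-u_0(x)=\int_0^\tau g_\alpha(r)\{\cK(c^*r)-u_0(x)\}\,dr .
\]
Moreover $u_0(x)\le\cK(x)<\cK(c^*r)$ for $0<c^*r<\min\{x,\eta\}$, so the right-hand side is positive. For $x<0$ at $t=0$, use \eqref{init:left}.
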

\begin{proof}
    Define
    \beaa
        u(t,x):= f_\alpha(t)u_0(x) + \int^t_{0} g_\alpha(t-s)\cK(x-c^*s)\,ds.
    \eeaa
    We first show that $u(t,x)$ is a front solution to \eqref{eq:main}.
    For \(z\neq0\),
    \beaa
        u(t,c^*t+z)-u(t,c^*t)&=& f_\alpha(t) \{u_0(c^*t+z)-u_0(c^*t)\} \\
        &&\qquad + \int^t_{0} g_\alpha(r) \{\cK(z+c^*r)-\cK(c^*r)\}\,dr.
    \eeaa
    Since $f_\alpha(t)>0$ and both $u_0$ and $\cK$ are non-increasing, 
    $u(t,x)$ is non-increasing with respect to $x$ for every $t\ge 0$.

    By the definition of $u_0$, we have
    \beaa
        f_\alpha(t)u_0(c^*t) = \int^{+\infty}_{t} g_\alpha(r)\mathcal K(c^*r)\,dr.
    \eeaa
    Then, we obtain
    \beaa
        u(t,c^*t)&=& f_\alpha(t)u_0(c^*t)
        + \int^t_{0} g_\alpha(t-s) \cK\bigl(c^*(t-s)\bigr)\,ds \\
        &=& \int^{+\infty}_{t} g_\alpha(r)\mathcal K(c^*r)\,dr
        + \int^t_0 g_\alpha(r)\mathcal K(c^*r)\,dr\\
        &=& \int^\infty_{0} g_\alpha(r)\mathcal K(c^*r)\,dr =u_T,
    \eeaa
    where the last equality follows from Equation~\eqref{eq:speed}.
    By using \eqref{init:left}, \eqref{init:right} and $K(0)>0$,
    it holds that
    \beaa
        u(t,x)>u_T\quad (x<c^*t),\qquad u(t,x)<u_T\quad (x>c^*t).
    \eeaa
    Consequently,
    \beaa
        H\bigl(u(s,y)-u_T\bigr)=H(c^*s-y)
    \eeaa
    for almost every $y\in\mathbb R$, and hence
    \beaa
        \int_{\bR} K(x-y)H\bigl(u(s,y)-u_T\bigr)\,dy
        = \int_{-\infty}^{c^*s}K(x-y)\,dy
        = \cK(x-c^*s).
    \eeaa
    Thus $u(t,x)$ coincides with the mild formulation of the Cauchy problem. 
    Therefore, $u$ is a front solution with threshold location
    $x^*(t)=c^*t$.

    It remains to prove the convergence estimate. By the representation of $\phi(\,\cdot\,;c^*)$, we have
    \beaa
        u(t,c^*t+z)-\phi(z;c^*)
        &=& f_\alpha(t)u_0(c^*t+z) - \int^\infty_{t} g_\alpha(r)\cK(z+c^*r)\,dr.
    \eeaa
    Since $0\leq\cK\leq\kappa$, it follows from Lemma~\ref{lem:ML}~(ii) that
    \beaa
        \left| u(t,c^*t+z)-\phi(z;c^*) \right|
        &\leq& f_\alpha(t)\|u_0\|_{L^\infty(\bR)}+\kappa\int^{+\infty}_{t} g_\alpha(r)\,dr\\
        &=& \left( \|u_0\|_{L^\infty(\bR)} + \kappa \right)f_\alpha(t).
    \eeaa
    Taking the supremum over $z\in\bR$ proves the desired estimate.
\end{proof}
\begin{remark}
    The function $u_0$ defined by \eqref{init:right} satisfies
    \beaa
        u_0(0)=u_T,\qquad \lim_{x\to+\infty}u_0(x)=0.
    \eeaa
    Moreover, $u_0$ is non-increasing on $[0,\infty)$.
    Finally,
    \beaa
        u_0(x)-u_T =\dfrac{u_T-\kappa/2}{(c^*)^\alpha\Gamma(1+\alpha)} x^\alpha + o(x^\alpha)
    \eeaa
    as $x\to+0$.
    Since $0<\alpha<1$ and $\kappa>2u_T$, the function $u_0$ is not differentiable at the origin.
\end{remark}

\medskip
The results of this section show that the asymptotic traveling wave with speed $c^*$ is realized as the long-time profile of the global front solution. 
For the initial data considered in Proposition~\ref{prop:ct}, the threshold location is exactly $c^*t$, and the convergence is of order $O(t^{-\alpha})$.
For nonzero speeds, we expect the algebraic rate $t^{-\alpha}$ to be sharp for general front-like initial data satisfying \eqref{condi:init}.
However, neither a general $O(t^{-\alpha})$ upper bound nor a corresponding lower bound is established here.

\bigskip
\section{Kernel-dependent tail asymptotics of the wave profile}\label{sec:ker}

In this section, we investigate how the decay of the connectivity kernel determines the tail behavior of the wave profile \eqref{phi}.
When $c=0$, we simply have
\beaa
    \phi(z;0)=\cK(z).
\eeaa
Moreover, the symmetry relation \eqref{inverse} shows that it is sufficient to consider $c>0$.
The corresponding results for $c<0$ follow by exchanging the left- and right-hand tails.

For $c>0$, the monotonicity of $\cK$ and
$\int_0^\infty g_\alpha(r)\,dr=1$ imply
\beaa
    0 \le \phi(z;c) \le \cK(z) = \phi(z;0).
\eeaa
We first determine the precise behavior of the right-hand tail.
We consider three representative classes of connectivity kernels:
algebraically decaying, exponentially decaying, and super-exponentially
decaying kernels.

\begin{proposition}\label{prop:right-decay}
    Suppose that $c\ge 0$.
    Under the condition \eqref{condi:kernel}, the following statements hold:
    \begin{itemize}
        \item[(i)] Suppose that there exist $\beta>1$ and $C_{alg}\ge 0$ such that
        \be\label{kernel:alg}
            \dlim_{x\to+\infty} x^{\beta} K(x) = C_{alg}. \tag{K1}
        \ee
        Then, $\phi(z;c)$ satisfies
        \beaa
            \dlim_{z\to +\infty} z^{\beta-1} \phi(z;c) = \dfrac{C_{alg}}{\beta-1}.
        \eeaa
        
        \item[(ii)] Suppose that there exist $\nu>0$ and $C_{exp}\ge 0$ such that
        \be\label{kernel:exp}
            \dlim_{x\to+\infty} e^{\nu x} K(x) = C_{exp}. \tag{K2}
        \ee
        Then, $\phi(z;c)$ satisfies
        \beaa
            \dlim_{z\to +\infty} e^{\nu z} \phi(z;c) = \dfrac{C_{exp}}{\nu\{1+(\nu c)^{\alpha}\}}.
        \eeaa

        \item[(iii)] Suppose that there exist $\sigma>0$, $p>1$, and $C_{sup}> 0$ such that
        \be\label{kernel:sup}
            \dlim_{x\to+\infty} e^{\sigma x^p} K(x) = C_{sup}. \tag{K3}
        \ee
        Then, $\phi(z;c)$ satisfies
        \beaa
            \dlim_{z\to+\infty} z^{p-1} e^{\sigma z^p} \phi(z;0) = \dfrac{C_{sup}}{\sigma p}
        \eeaa
        when $c=0$, and        
        \beaa
            \lim_{z\to+\infty} z^{(1+\alpha)(p-1)} e^{\sigma z^p}\phi(z;c) = \frac{C_{sup}} {(\sigma p)^{1+\alpha}c^\alpha}
        \eeaa
        when $c>0$.
    \end{itemize}
\end{proposition}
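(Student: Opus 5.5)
The plan is to work directly from the representation \eqref{phi},
\[
\phi(z;c)=\int_0^{+\infty} g_\alpha(r)\,\cK(z+cr)\,dr .
\]
The first step is to obtain the tail of $\cK$ itself. For $c=0$ we have $\phi(z;0)=\cK(z)$, and each claimed $c=0$ asymptotic follows by a l'H\^opital (Stolz-type) argument applied to $\cK(z)=\int_z^{+\infty}K(y)\,dy$. This gives:
\[
\text{under \eqref{kernel:alg}:}\quad \cK(z)\sim \frac{C_{alg}}{\beta-1}\,z^{1-\beta},
\]
\[
\text{under \eqref{kernel:exp}:}\quad \cK(z)\sim \frac{C_{exp}}{\nu}\,e^{-\nu z},
\]
\[
\text{under \eqref{kernel:sup}:}\quad \cK(z)\sim \frac{C_{sup}}{\sigma p}\,z^{1-p}e^{-\sigma z^p}.
\]
In the cases $C_{alg}=0$ or $C_{exp}=0$, ``$\sim$'' is to be read as $o(\cdot)$. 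In each case these asymptotics also furnish a global bound $\cK(x)\le C' w(x)$ for $x\ge 1$, where $w$ is the corresponding weight.

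For $c>0$ in cases (i) and (ii), I will multiply by the weight and pass to the limit inside the integral by dominated convergence.

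In case (i), $\cK(z+cr)\le\cK(z)$ gives the domination $z^{\beta-1}\cK(z+cr)\le z^{\beta-1}\cK(z)\le C'$. Pointwise, $z^{\beta-1}\cK(z+cr)\to C_{alg}/(\beta-1)$ for each fixed $r$, because $(z+cr)/z\to1$. Since $\int_0^\infty g_\alpha=1$, the limit follows.

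In case (ii), I will write $e^{\nu z}\cK(z+cr)=e^{-\nu cr}\,e^{\nu(z+cr)}\cK(z+cr)$. The factor $e^{\nu(z+cr)}\cK(z+cr)$ is bounded for $z\ge 1$ and tends to $C_{exp}/\nu$. Dominated convergence then gives the limit $\frac{C_{exp}}{\nu}\int_0^\infty g_\alpha(r)e^{-\nu c r}\,dr$. The remaining integral is the Laplace transform of $g_\alpha$ at $s=\nu c$, namely $1/(1+s^\alpha)$; this is the standard transform of $t^{\alpha-1}E_{\alpha,\alpha}(-t^\alpha)$ and is consistent with Lemma~\ref{lem:ML}(ii)--(iii). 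This yields the stated constant.

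Case (iii) with $c>0$ is the main obstacle, because the weight is not translation-stable and the relevant $r$-range shrinks as $z\to\infty$. I will set $\lambda=\lambda(z):=\sigma p c\,z^{p-1}\to\infty$ and substitute $r=\rho/\lambda$:
\[
z^{(1+\alpha)(p-1)}e^{\sigma z^p}\phi(z;c)=\int_0^\infty \Bigl[\lambda^{\alpha-1}g_\alpha(\rho/\lambda)\Bigr]\,\lambda^{-\alpha}z^{(1+\alpha)(p-1)}\,e^{\sigma z^p}\cK\!\Bigl(z+\tfrac{c\rho}{\lambda}\Bigr)\,d\rho .
\]
The pointwise limit of each factor is as follows.
\begin{itemize}
\item Since $g_\alpha(t)\sim t^{\alpha-1}/\Gamma(\alpha)$ as $t\to0$, we get $\lambda^{\alpha-1}g_\alpha(\rho/\lambda)\to\rho^{\alpha-1}/\Gamma(\alpha)$.
\item We have $\lambda^{-\alpha}z^{(1+\alpha)(p-1)}=(\sigma pc)^{-\alpha}z^{p-1}$.
\item By the tail of $\cK$ and the expansion $(z+cr)^p-z^p=pz^{p-1}cr+O(z^{p-2}r^2)$, where $r=\rho/\lambda=O(z^{1-p})$ makes the error $O(z^{-p})\to0$, we get $z^{p-1}e^{\sigma z^p}\cK(z+c\rho/\lambda)\to\frac{C_{sup}}{\sigma p}e^{-\rho}$.
\end{itemize}
The domination comes from three facts: \eqref{ieq:galpha} gives $\lambda^{\alpha-1}g_\alpha(\rho/\lambda)\le\rho^{\alpha-1}/\Gamma(\alpha)$; convexity gives $(z+cr)^p-z^p\ge pz^{p-1}cr$; and $\cK(x)\le C'x^{1-p}e^{-\sigma x^p}$ together with $x\ge z$ bounds the last factor. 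These yield the integrable majorant $C\rho^{\alpha-1}e^{-\rho}$. Finally, $\int_0^\infty\rho^{\alpha-1}e^{-\rho}\,d\rho/\Gamma(\alpha)=1$, which gives the limit $C_{sup}/\bigl((\sigma p)^{1+\alpha}c^\alpha\bigr)$.
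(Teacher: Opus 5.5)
Your proposal is correct and follows essentially the same route as the paper: l'H\^opital for the tail of $\cK$, dominated convergence together with the Laplace transform $\int_0^\infty g_\alpha(r)e^{-sr}\,dr=1/(1+s^\alpha)$ for (i)--(ii), and the same rescaling $\lambda(z)=\sigma p c\,z^{p-1}$ for (iii). The one real addition is in (iii), where the paper interchanges limit and integral without justification. Your convexity bound $(z+h)^p-z^p\ge p z^{p-1}h$, combined with \eqref{ieq:galpha}, supplies the integrable majorant $C\rho^{\alpha-1}e^{-\rho}$ that makes that interchange rigorous.
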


\medskip
Proposition~\ref{prop:right-decay} shows that the effect of the time-fractional derivative on the right-hand tail depends on the decay scale of the connectivity kernel.

We next consider the left-hand tail. 
In contrast to the right-hand tail,
the decay of $\kappa-\phi(z;c)$ is determined by a competition between
the algebraic tail of the connectivity kernel and the long memory induced
by the Caputo derivative.

\begin{proposition}\label{prop:left-decay}
    Suppose that $c>0$.
    Under the conditions \eqref{condi:kernel} and \eqref{kernel:alg}, we have
        \beaa
        \dlim_{z\to -\infty} |z|^{\beta-1}(\kappa -\phi(z;c)) = \dfrac{C_{alg}}{\beta-1}
        \eeaa
        when $\beta<1+\alpha$, 
        \beaa
        \dlim_{z\to -\infty} |z|^{\alpha}(\kappa -\phi(z;c)) = \dfrac{C_{alg}}{\beta-1} + \dfrac{\kappa c^{\alpha}}{\Gamma(1-\alpha)}
        \eeaa
        when $\beta=1+\alpha$, and
        \be\label{lim:atw}
        \dlim_{z\to -\infty} |z|^{\alpha}(\kappa -\phi(z;c)) = \dfrac{\kappa c^{\alpha}}{\Gamma(1-\alpha)}
        \ee
        when $\beta>1+\alpha$.
\end{proposition}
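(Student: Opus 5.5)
We will work with the complementary tail. Since $\kappa-\cK(x)=\cK(-x)=\int_{-\infty}^{x}K(y)\,dy$ and $\int_0^\infty g_\alpha=1$,
\begin{equation*}
\kappa-\phi(z;c)=\int_0^{+\infty} g_\alpha(r)\,\cK(-z-cr)\,dr .
\end{equation*}
Put $w:=-z\to+\infty$. We split the $r$-integral at $r_w:=w/c$, the point where the argument $w-cr$ changes sign, and write $\kappa-\phi(z;c)=I_1(w)+I_2(w)$. Here
\begin{equation*}
I_1(w):=\int_0^{r_w}g_\alpha(r)\cK(w-cr)\,dr,\qquad I_2(w):=\int_{r_w}^{+\infty}g_\alpha(r)\cK(w-cr)\,dr .
\end{equation*}
By \eqref{kernel:alg}, $\cK(x)\sim \frac{C_{alg}}{\beta-1}x^{1-\beta}$ as $x\to+\infty$. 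On $(r_w,\infty)$ the argument of $\cK$ is nonpositive, so $\cK\to\kappa$ there. We also need the tail of $g_\alpha$. From Lemma~\ref{lem:ML}(i) and $g_\alpha=-f_\alpha'$, together with complete monotonicity (via the monotone density theorem, or the known expansion of $E_{\alpha,\alpha}$), we get $g_\alpha(r)\sim \frac{\alpha r^{-1-\alpha}}{\Gamma(1-\alpha)}$ as $r\to\infty$. Consequently $\int_t^\infty g_\alpha=f_\alpha(t)\sim t^{-\alpha}/\Gamma(1-\alpha)$.

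For $I_2$, substitute $r=r_w+s/c$:
\begin{equation*}
I_2(w)=\frac1c\int_0^\infty g_\alpha\bigl(r_w+\tfrac{s}{c}\bigr)\cK(-s)\,ds .
\end{equation*}
Since $\kappa-\cK(-s)=\cK(s)$, we have $I_2=\kappa f_\alpha(r_w)-\frac1c\int_0^\infty g_\alpha(r_w+s/c)\cK(s)\,ds$. The first term satisfies $\kappa f_\alpha(w/c)\sim \frac{\kappa c^\alpha}{\Gamma(1-\alpha)}w^{-\alpha}$. For the correction, $g_\alpha$ is decreasing, so it is at most $g_\alpha(r_w)\int_0^\infty\cK(s)\,ds$ when $\beta>2$, which is $O(w^{-1-\alpha})$. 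When $\beta\le2$ we split $s\lessgtr w$ instead. On $s<w$ the bound $g_\alpha(r_w)\int_0^w\cK$ is $O(w^{-1-\alpha}w^{2-\beta})=o(w^{-\alpha})$ because $\beta>1$. On $s>w$ we bound by $\cK(w)\int g_\alpha\lesssim w^{1-\beta}\int_{r_w}^\infty g_\alpha\,dr$, which is $o(w^{-\alpha})$ as well. Hence in all cases $I_2(w)=\frac{\kappa c^\alpha}{\Gamma(1-\alpha)}w^{-\alpha}+o(w^{-\alpha})$.

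For $I_1$ we show $I_1(w)=\frac{C_{alg}}{\beta-1}w^{1-\beta}+o(w^{1-\beta})+o(w^{-\alpha})$. Fix $\varepsilon\in(0,1)$ and split at $r=\varepsilon r_w$. On $[0,\varepsilon r_w]$ we have $w-cr\in[(1-\varepsilon)w,w]$, so $\cK(w-cr)$ lies between $\cK(w)$ and $\cK((1-\varepsilon)w)$. Since $\int_0^{\varepsilon r_w}g_\alpha\to1$, this piece equals $\cK(w)(1+O_\varepsilon)$, where $O_\varepsilon$ is a relative error that vanishes as $\varepsilon\to0$ (the ratio $\cK((1-\varepsilon)w)/\cK(w)\to(1-\varepsilon)^{1-\beta}$). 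On $[\varepsilon r_w,r_w]$ we bound $g_\alpha\le C_\varepsilon w^{-1-\alpha}$ and substitute $x=w-cr$, which gives at most $C_\varepsilon c^{-1}w^{-1-\alpha}\int_0^{w}\cK(x)\,dx$. Here $\int_0^w\cK$ is $O(w^{2-\beta})$ if $\beta<2$, $O(\log w)$ if $\beta=2$, and $O(1)$ if $\beta>2$. In every case the piece is $o(w^{-\alpha})$, and it is also $o(w^{1-\beta})$ when $\beta<1+\alpha$.

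The main obstacle is this intermediate region: the constant $C_\varepsilon$ blows up like $\varepsilon^{-1-\alpha}$, so the two limits must be taken in the right order. We first let $w\to\infty$ with $\varepsilon$ fixed, and only then let $\varepsilon\to0$. The bounds above show the middle piece is negligible for each fixed $\varepsilon$, so this ordering works.

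Adding the two parts gives $\kappa-\phi\approx\frac{C_{alg}}{\beta-1}w^{1-\beta}+\frac{\kappa c^\alpha}{\Gamma(1-\alpha)}w^{-\alpha}$, and comparing exponents settles the three cases. If $\beta<1+\alpha$, the $w^{1-\beta}$ term dominates. If $\beta=1+\alpha$, the two terms are of the same order and both constants add. If $\beta>1+\alpha$, only the memory term survives, which gives \eqref{lim:atw}. When $C_{alg}=0$ the same estimates still hold, with $\cK(w)=o(w^{1-\beta})$ in place of the exact asymptotics.
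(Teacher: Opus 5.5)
Your argument is correct, but it takes a different route from the paper's. Your one slip is harmless: for $\beta=2$ the $s<w$ part of the $I_2$ correction is $O(w^{-1-\alpha}\log w)$, not $O(w^{-1-\alpha}w^{2-\beta})$, and it is still $o(w^{-\alpha})$.

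\textbf{The paper's route.} The paper does not split the $r$-integral at $r=w/c$. It integrates by parts, using $g_\alpha=-f_\alpha'$ and $\cK'=-K$, and obtains the exact identity
$\kappa-\phi(z;c)=\cK(w)+J(w)$, where $w=-z$ and $J(w)=\int_{-\infty}^{w}f_\alpha\bigl((w-y)/c\bigr)K(y)\,dy$.
The kernel contribution $\cK(w)$ therefore appears with no error term. The memory term $J$ is split at $y=w/2$:
\begin{itemize}
\item for $y\le w/2$, dominated convergence with Lemma~\ref{lem:ML}(i) gives $w^\alpha J_1(w)\to\kappa c^\alpha/\Gamma(1-\alpha)$;
\item for $y\in[w/2,w]$, the pointwise bound $K(y)\lesssim w^{-\beta}$ and $\int_0^{w/(2c)}f_\alpha=O(w^{1-\alpha})$ give $J_2=O(w^{1-\alpha-\beta})$.
\end{itemize}
This uses only the asymptotics of $f_\alpha$. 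It needs no $\varepsilon$-splitting, no ordering of two limits, and no case distinction according to whether $\beta>2$.

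\textbf{What your route costs.} It needs an extra input: decay of the density $g_\alpha$. You only ever use the upper bound $g_\alpha(r)=O(r^{-1-\alpha})$. That bound follows at once from the monotonicity of $g_\alpha$, since $\tfrac r2\, g_\alpha(r)\le\int_{r/2}^{r}g_\alpha=f_\alpha(r/2)-f_\alpha(r)$. So the monotone density theorem is not needed.

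\textbf{What your route buys.}
\begin{itemize}
\item It uses hypothesis (K1) only through the tail behaviour of $\cK$. The paper's bound on $J_2$ uses pointwise control of $K$, so your argument works under the weaker integrated condition $\cK(x)\sim\frac{C_{alg}}{\beta-1}x^{1-\beta}$.
\item It makes the mechanism visible. To leading order, the universal $|z|^{-\alpha}$ term is $\kappa f_\alpha(|z|/c)$. This is the memory weight of the past times $r>|z|/c$, at which the point $z$ was still ahead of the front.
\end{itemize}
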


\medskip
Proposition~\ref{prop:left-decay} exhibits a transition at $\beta=1+\alpha$.
When $C_{alg}>0$ and $1<\beta<1+\alpha$, the connectivity kernel decays sufficiently slowly that its algebraic tail determines the leading behavior of $\kappa-\phi(z;c)$.
When $\beta>1+\alpha$, the contribution of the kernel is of lower order, and the fractional memory produces the universal
tail \eqref{lim:atw}.
At the critical exponent $\beta=1+\alpha$, the kernel contribution, when $C_{\rm alg}>0$, is of the same order as the memory contribution, and the two coefficients add.

In particular, if the connectivity kernel decays exponentially or super-exponentially, then \eqref{lim:atw} holds.
Thus, even when the connectivity kernel has a rapidly decaying tail, the wave profile generally possesses an algebraic tail on the side behind the front. 
This asymmetric behavior is a direct consequence of the long-memory effect of the time-fractional derivative.

The proofs of Propositions~\ref{prop:right-decay} and \ref{prop:left-decay} are deferred to Appendix~\ref{app:ker}.

\bigskip
\section{Concluding remarks}\label{sec:dis}

In this paper, we have developed a rigorous framework for front propagation in a time-fractional Amari model.
The Heaviside firing-rate function reduces the Cauchy problem to a nonlinear Volterra equation for the threshold location. 
Within this framework, we constructed a unique global front solution, identified its short- and long-time behavior, and showed that the asymptotic traveling wave is realized as the long-time limit of the solution. 
A central ingredient in the convergence analysis is the asymptotic linearity of the local increments of the threshold location, which is obtained through a rigidity argument for its translation limits. 
The explicit representation of the profile also makes it possible to relate its tail behavior to the decay properties of the connectivity kernel.

Several questions remain open. 
First, our main convergence theorem concerns strictly decreasing front-like initial data.
A natural problem is to characterize a wider class of initial conditions for the convergence of the asymptotic traveling wave. 
This includes front-like initial data that are not monotone, as well as localized perturbations of monotone fronts. 
Even within the class considered here, a quantitative convergence rate in the nonzero-speed case remains to be established for general initial data.
Determining the second-order asymptotics of the threshold location also remains an open problem.

Second, the formulation by the threshold location used in this paper relies essentially on the Heaviside firing-rate function. 
Extending the present approach to smooth sigmoidal firing rates would require a different mechanism for tracking the transition layer. 
Establishing the persistence of speed selection and convergence to asymptotic traveling waves under such regularizations is an important direction for future work.

\bigskip
\section*{Acknowledgments}
HI is supported by JSPS KAKENHI Grant Numbers 23K13013, 24H00188 and 25K07142.

\bigskip
\section*{Declaration of generative AI and AI-assisted technologies in the manuscript preparation process}
During the preparation of this work, the author used ChatGPT-5.6 sol for language editing and consistency checking.
The author reviewed and edited the output as needed and takes full responsibility for the content of the published article.

\appendix
\numberwithin{equation}{section}

\bigskip
\section{Proofs of Propositions~\ref{prop:right-decay} and \ref{prop:left-decay}}\label{app:ker}

In this appendix, we prove the tail asymptotics stated in Propositions~\ref{prop:right-decay} and~\ref{prop:left-decay}.

\bigskip
\begin{proof}[Proof of Proposition~\ref{prop:right-decay}]
    Assume that $c\geq 0$ and that Condition~\eqref{condi:kernel} holds.

    (i) By l'H\^{o}pital's rule and \eqref{kernel:alg}, we obtain
    \be\label{cK:alg}
         \dlim_{x\to +\infty} x^{\beta-1} \cK(x)   = \dfrac{C_{alg}}{\beta-1}.
    \ee
    Since $\phi(z;0)=\cK(z)$, this proves the assertion when $c=0$.

    Let $c>0$. The dominated convergence theorem yields
    \beaa
        z^{\beta-1} \phi(z;c) &=& \int^{+\infty}_{0} g_{\alpha}(r) \dfrac{z^{\beta-1}}{(z+cr)^{\beta-1}} \left( (z+cr)^{\beta-1} \cK(z+cr) \right)\,dr \\
        &\to& \dfrac{C_{alg}}{\beta-1} \int^{+\infty}_{0} g_{\alpha}(r)\, dr = \dfrac{C_{alg}}{\beta-1}
    \eeaa
    as $z\to+\infty$.
    
    (ii) By l'H\^{o}pital's rule and \eqref{kernel:exp},
    \beaa
         \dlim_{x\to +\infty} e^{\nu x} \cK(x) = \dfrac{C_{exp}}{\nu}.
    \eeaa
    This immediately proves the assertion when $c=0$.

    Let $c>0$.
    The dominated convergence theorem yields
    \beaa
        e^{\nu z} \phi(z;c) &=& \int^{+\infty}_{0} g_{\alpha}(r)  e^{-c\nu r} \left( e^{\nu(z+cr)} \cK(z+cr) \right)\,dr \\
        &\to& \dfrac{C_{exp}}{\nu} \int^{+\infty}_{0} g_{\alpha}(r)  e^{-c\nu r}\, dr = \dfrac{C_{exp}}{\nu\{1+(c\nu)^{\alpha}\}}
    \eeaa
    as $z\to+\infty$.
    Here, we used \cite[formula (7.1)]{HMS} in the last equality.

    (iii) By l'H\^{o}pital's rule and \eqref{kernel:sup}, we first obtain
    \be\label{cK:sup} 
        \dlim_{z\to+\infty} z^{p-1} e^{\sigma z^p} \phi(z;0) = \dlim_{x\to+\infty} x^{p-1} e^{\sigma x^p} \cK(x) = \dfrac{C_{sup}}{\sigma p}. 
    \ee 
    Indeed, 
    \beaa
        \lim_{x\to+\infty} \frac{\cK(x)}{x^{1-p}e^{-\sigma x^p}} &=& \lim_{x\to+\infty} \frac{-K(x)} {e^{-\sigma x^p} \{(1-p)x^{-p}-\sigma p\}} \\ 
        &=& \frac{C_{\rm sup}}{\sigma p}. 
    \eeaa
    This proves the assertion when $c=0$.

    We next consider the case $c>0$. 
    Set $\lambda(z):=\sigma pc\,z^{p-1}$.  
    Then $\lambda(z)\to+\infty$ as $z\to+\infty$, and the change of variables $s=\lambda(z)r$ gives
    \beaa
        \phi(z;c) &=& \dfrac{1}{\lambda(z)} \int^{+\infty}_{0} g_\alpha\left(\dfrac{s}{\lambda(z)}\right) \cK\left( z+\dfrac{cs}{\lambda(z)} \right)\,ds. 
    \eeaa
    For each fixed $s>0$, 
    the expansion of the Mittag--Leffler function yields
    \be\label{integrand1} 
        \dlim_{z\to+\infty}\lambda(z)^{\alpha-1} g_\alpha\left(\dfrac{s}{\lambda(z)}\right)= \frac{s^{\alpha-1}}{\Gamma(\alpha)}.
    \ee 
    Moreover, \eqref{cK:sup} implies
    \be\label{integrand2}
        \dlim_{z\to +\infty}\dfrac{ \cK\left(z+\dfrac{cs}{\lambda(z)}\right)} {\cK(z)} = e^{-s}.
    \ee
    
    Combining \eqref{integrand1} and \eqref{integrand2}, we obtain
    \beaa
    \dlim_{z\to+\infty} \dfrac{\{\lambda(z)\}^{\alpha} \phi(z;c)}{\cK(z)} &=& \int^{+\infty}_{0} \dlim_{z\to+\infty} \left( \lambda(z)^{\alpha-1} g_\alpha\left(\frac{s}{\lambda(z)}\right) \dfrac{ \cK\left(z+\dfrac{cs}{\lambda(z)}\right)}{\cK(z)} \right) \,ds \\
    &=& \dfrac{1}{\Gamma(\alpha)} \int^{+\infty}_{0} s^{\alpha-1} e^{-s}\,ds = 1.
    \eeaa    
    The proof is complete.
\end{proof}

\medskip
Next, we give the proof of Proposition~\ref{prop:left-decay}.

\begin{proof}[Proof of Proposition~\ref{prop:left-decay}]
    Let $c>0$. Since $K$ is even, we have
    \beaa
        \cK(-x)=\kappa-\cK(x).
    \eeaa
    Thus, 
    \beaa
        \kappa-\phi(z;c)
        = \int^{+\infty}_0 g_\alpha(r) \bigl\{\kappa-\cK(z+cr)\bigr\}\,dr
        = \int^{+\infty}_{0} g_\alpha(r)\cK(-z-cr)\,dr.
    \eeaa

    Since $g_{\alpha}=-f'_{\alpha}$ and $\cK'=-K$,
    integration by parts gives 
    \bea
        \kappa-\phi(z;c) &=& \cK(-z) + c\int^{+\infty}_{0} f_\alpha(r)K(-z-cr)\,dr \notag \\
        &=& \cK(-z) +
        \int^{-z}_{-\infty} f_\alpha \left(\frac{-z-y}{c}\right)K(y)\,dy =  \cK(-z) + J(-z),\label{kappa-phi}
    \eea
    where
    \beaa
        J(z) &=& J_1(z)+J_2(z)
    \eeaa
    with
    \beaa
        J_1(z) := \int_{-\infty}^{z/2} f_\alpha \left(\frac{z-y}{c}\right)K(y)\,dy, \quad
        J_2(z) := \int_{z/2}^{z}
        f_\alpha \left( \frac{z-y}{c}\right)K(y)\,dy.
    \eeaa
    
    We first evaluate $J_1(z)$. 
    By Lemma~\ref{lem:ML}~(i), for every fixed $y\in\bR$,
    \beaa
        \dlim_{z\to+\infty} z^{\alpha} f_\alpha \left(\frac{z-y}{c}\right) = \dfrac{c^\alpha}{\Gamma(1-\alpha)}.
    \eeaa
    Thus, the dominated convergence theorem yields 
    \beaa
        \lim_{z\to+\infty} z^\alpha J_1(z) = \dfrac{\kappa c^\alpha}{\Gamma(1-\alpha)}.
    \eeaa

    We next estimate $J_2(z)$.
    By \eqref{kernel:alg}, there exist $R>0$ and $C_K>0$ such that
    \beaa
        K(x) \le  C_K |2x|^{-\beta}\quad (x\ge R).
    \eeaa
    For $z\ge 2R$,
    \beaa
        J_2(z)&\le& C_K z^{-\beta} \int_{z/2}^{z} E_\alpha \left(- \left(\frac{z-y}{c}\right)^{\alpha}\right)\,dy \\
        &=& cC_K z^{-\beta}
        \int^{z/(2c)}_{0} E_\alpha(-r^{\alpha})\,dr\\
        &=&  O(z^{1-\alpha-\beta})
    \eeaa
    holds. We thus get
    \beaa
        \dlim_{z\to+\infty} z^\alpha J_2(z) =0.
    \eeaa
    Hence, we obtain
    \be\label{lim:J}
        \dlim_{z\to+\infty} z^{\alpha} J(z) = \frac{\kappa c^\alpha}{\Gamma(1-\alpha)}.
    \ee

    By using \eqref{kappa-phi}, \eqref{cK:alg} and \eqref{lim:J}, 
    we obtain 
    \beaa
        \dlim_{z\to -\infty} |z|^{\beta-1}(\kappa -\phi(z;c)) = \dlim_{z\to+\infty} z^{\beta-1}(\cK(z) + J(z)) = \dfrac{C_{alg}}{\beta-1}
    \eeaa
    when $\beta<1+\alpha$.
    Similarly, we have
    \beaa
        \dlim_{z\to -\infty} |z|^{\alpha}(\kappa -\phi(z;c)) = \dlim_{z\to+\infty} z^{\alpha}(\cK(z) + J(z)) = \dfrac{C_{alg}}{\beta-1} + \frac{\kappa c^\alpha}{\Gamma(1-\alpha)}
    \eeaa
    when $\beta=1+\alpha$, and
    \beaa
        \dlim_{z\to -\infty} |z|^{\alpha}(\kappa -\phi(z;c)) = \dlim_{z\to+\infty} z^{\alpha}(\cK(z) + J(z)) =  \frac{\kappa c^\alpha}{\Gamma(1-\alpha)}
    \eeaa
    when $\beta>1+\alpha$.

    The proof is complete.
\end{proof}

\bigskip

\end{document}